\renewcommand{\le}{\leqslant}
\renewcommand{\ge}{\geqslant}
\renewcommand{\setminus}{\smallsetminus}
\renewcommand{\gamma}{\upgamma}
\newcommand{\cupdot}{\mathbin{\mathaccent\cdot\cup}}
\renewcommand{\j}{\mathsf{j}}
\newcommand{\q}{\mathsf{q}}
\newcommand{\w}{\mathsf{w}}
\renewcommand{\emptyset}{\varnothing}
\newcommand{\vol}{\mathrm{\bf vol}}
\newcommand{\spn}{\mathrm{\bf span}}
\newcommand{\conv}{\mathrm{\bf conv}}
\newcommand{\rank}{\mathrm{\bf rank}}
\newcommand{\srank}{\mathrm{\bf srank}}
\newcommand{\entrank}{\mathrm{\bf Entrank}}
\renewcommand{\det}{\mathrm{\bf det}}
\renewcommand{\P}{\mathsf{P}}
\newcommand{\sign}{\mathrm{\bf sign}}
\newcommand{\n}{\{1,\ldots,n\}}
\newcommand{\m}{\{1,\ldots,m\}}
\newcommand{\s}{\sigma}
\renewcommand{\d}{\delta}
\newcommand{\D}{{\mathsf{D}}}
\newcommand{\e}{\varepsilon}
\newcommand{\R}{\mathbb R}
\newcommand{\1}{\mathbf 1}
\newtheorem{theorem}{Theorem}
\newtheorem{lemma}[theorem]{Lemma}
\theoremstyle{remark}
\newtheorem{remark}[theorem]{Remark}
\newtheorem{example}[theorem]{Example}
\def\moverlay{\mathpalette\mov@rlay}
\def\mov@rlay#1#2{\leavevmode\vtop{%
   \baselineskip\z@skip \lineskiplimit-\maxdimen
   \ialign{\hfil$\m@th#1##$\hfil\cr#2\crcr}}}
\newcommand{\charfusion}[3][\mathord]{
    #1{\ifx#1\mathop\vphantom{#2}\fi
        \mathpalette\mov@rlay{#2\cr#3}
      }
    \ifx#1\mathop\expandafter\displaylimits\fi}
\newcommand{\bigcupdot}{\charfusion[\mathop]{\bigcup}{\cdot}}
\renewcommand{\S}{\mathsf{S}}
\renewcommand{\ss}{\mathsf{s}}
\renewcommand{\subset}{\subseteq}
\renewcommand{\supset}{\supseteq}
\DeclareMathOperator{\trace}{\bf Tr}
\newcommand{\E}{\mathbb{ E}}
\newcommand{\N}{\mathbb N}
\newcommand{\eqdef}{\stackrel{\mathrm{def}}{=}}
\newcommand{\proj}{\mathsf{Proj}}
\begin{document}


\title[Restricted invertibility revisited]{Restricted invertibility revisited}


\dedicatory{Dedicated to Jirka Matou\v{s}ek}

\author{Assaf Naor}
\address{Mathematics Department\\ Princeton University\\ Fine Hall, Washington Road, Princeton, NJ 08544-1000, USA}
\email{naor@math.princeton.edu}
\thanks{A.~N. was supported by BSF grant
2010021, the Packard Foundation and the Simons Foundation.}

\author{Pierre Youssef}
\address {Laboratoire de Probabilit\'es et de Mod\`eles Al\'eatoires\\
Universit\'e Paris-Diderot\\
5 rue Thomas Mann 75205, Paris CEDEX 13, France}
\email{youssef@math.univ-paris-diderot.fr}
\date{}

\begin{abstract} Suppose that $m,n\in \N$ and that $A:\R^m\to \R^n$ is a linear operator. It is shown here that if $k,r\in \N$ satisfy $k<r\le \rank(A)$ then there exists a subset $\s\subset \m$ with $|\s|=k$ such that the restriction of $A$ to $\R^\s\subset \R^m$ is invertible, and moreover the operator norm of the inverse $A^{-1}:A(\R^\s)\to \R^m$ is at most a constant multiple of the quantity $\sqrt{mr/((r-k)\sum_{i=r}^m \ss_i(A)^2)}$,
where $\ss_1(A)\ge\ldots\ge \ss_m(A)$ are the singular values of $A$. This improves over a series of works, starting from the seminal Bourgain--Tzafriri Restricted Invertibility Principle, through the works of Vershynin, Spielman--Srivastava and Marcus--Spielman--Srivastava. In particular, this directly implies an improved restricted invertibility principle in terms of Schatten--von Neumann norms.
\end{abstract}

\maketitle

\section{Introduction}

Given $m,n\in \N$, the rank of a linear operator $A:\R^m\to \R^n$ equals the largest possible dimension of a linear subspace $V\subset \R^m$ on which $A$ is injective, i.e., the inverse $A^{-1}:A(V)\to V$ exists. The {\em restricted invertibility problem} asks for conditions on $A$ that ensure a strengthening of this basic fact from linear algebra in two ways, corresponding to additional {\em structural information} on the subspace $V\subset \R^m$ on which $A$ is injective, as well as {\em quantitative information} on the behavior of the inverse $A^{-1}:A(V)\to V$. Firstly, the goal is to find a large dimensional {\em coordinate subspace} on which $A$ is invertible, i.e., we wish to find a large subset $\sigma\subset \m$ such that $A$ is injective on $\R^\sigma\subset \R^m$. Secondly, rather than being satisfied with mere invertibility we ask for $A$ to be  {\em quantitatively invertible}  on $\R^\sigma$ in the sense that the operator norm of the inverse $A^{-1}: A(\R^\sigma)\to \R^\s$ is not too large. Obviously, additional assumptions on $A$ are required for such conclusions to hold true.

The following theorem, which is known as the Bourgain--Tzafriri Restricted Invertibility Principle~\cite{BT87,BT89,BT91}, is a seminal result that addressed the above question and had major influence on subsequent research, with a variety of interesting applications to several areas. Throughout what follows, for $m\in \N$ the standard coordinate basis of $\R^m$ will be denoted by $e_1,\ldots,e_m\in \R^m$.

 \begin{theorem}[Bourgain--Tzafriri]\label{th-bourgain-tzafriri} There exist two universal constant $c,C\in (0,\infty)$ with the following property. Suppose that $m\in \N$ and that $A:\R^m\to \R^m$ is a linear operator such that the Euclidean norm of the vector $Ae_j\in \R^m$ equals $1$ for every $j\in \m$. Letting $\|A\|$ denote the operator norm of $A$, there exists a subset $\sigma\subset \m$ with $|\sigma|\ge cm/\|A\|^2$ such that $A$ is injective on $\R^\sigma$ and the operator norm of the inverse $A^{-1}:A(\R^\sigma)\to \R^\s$ is at most $C$.
 \end{theorem}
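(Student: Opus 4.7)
The plan is to prove the theorem via the deterministic barrier-method approach of Spielman and Srivastava, which both gives a clean proof and attains a near-sharp constant. Let $v_1,\ldots,v_m \in \R^m$ denote the columns of $A$; by hypothesis $\|v_j\|_2 = 1$ for each $j \in \m$, and $\|\sum_{j=1}^m v_j v_j^\top\| = \|AA^\top\| = \|A\|^2$. Because the squared operator norm of the inverse $A^{-1}: A(\R^\sigma) \to \R^\s$ equals $1/\lambda_{\min}(A_\sigma^\top A_\sigma)$, where $A_\sigma$ is the restriction of $A$ to the coordinates in $\sigma$, the conclusion of Theorem~\ref{th-bourgain-tzafriri} is equivalent to producing a set $\sigma \subset \m$ with $|\sigma| \ge cm/\|A\|^2$ and $\lambda_{\min}\bigl(\sum_{j \in \sigma} v_j v_j^\top\bigr) \ge 1/C^2$.

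I would build $\sigma$ greedily. Set $B_0 = 0$ and iteratively select distinct indices $j_1,\ldots,j_k \in \m$ so that $B_t = \sum_{s=1}^t v_{j_s} v_{j_s}^\top$ meets two invariants at every step: a spectral lower bound $\lambda_{\min}(B_t) > l_t$, with lower-barrier position $l_t = t\delta$ for a step size $\delta > 0$ to be tuned, and a bound $\Phi_t := \trace\bigl((B_t - l_t I)^{-1}\bigr) \le \Phi^{\ast}$ on the associated barrier potential. Applying the Sherman--Morrison identity to $B_{t+1} - l_{t+1} I = (B_t - l_{t+1} I) + v_{j_{t+1}} v_{j_{t+1}}^\top$ expresses $\Phi_{t+1}$ explicitly in terms of $\Phi_t$, the step $\delta$, and the two quadratic forms $v_{j_{t+1}}^\top (B_t - l_{t+1}I)^{-1} v_{j_{t+1}}$ and $v_{j_{t+1}}^\top (B_t - l_{t+1}I)^{-2} v_{j_{t+1}}$. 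The set of indices $j$ that maintain both invariants is then a concrete intersection of inequalities in these two forms.

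The crux is verifying that this admissible set is nonempty at every step. I would average the Sherman--Morrison expression over all $j \in \m$, using the two structural bounds $\sum_{j=1}^m v_j v_j^\top \preceq \|A\|^2 I$ and $\Phi_t \le \Phi^{\ast}$ to control the resulting sum. With the calibration $\delta = c_1/\|A\|^2$, $\Phi^{\ast} = c_2 m$, and total number of selected indices $k = c_3 m / \|A\|^2$ for suitably small absolute constants $c_1, c_2, c_3$, the averaged increment of $\Phi$ becomes non-positive, so a pigeonhole argument produces an admissible $j_{t+1}$; since $k < m/\|A\|^2 \le m$, indices also never run out.

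After $k$ steps one has $\lambda_{\min}(B_k) > l_k = k\delta$, which is a universal positive constant under the above calibration, so $\sigma = \{j_1,\ldots,j_k\}$ has size $k \gtrsim m/\|A\|^2$ and furnishes the required spectral lower bound. The main technical obstacle is balancing the two invariants simultaneously: as $l_{t+1}$ approaches $\lambda_{\min}(B_t)$ the inverse-square quadratic form $v_j^\top (B_t - l_{t+1}I)^{-2} v_j$ can blow up, and one must verify that the averaging argument still closes. This requires $\delta$, $\Phi^{\ast}$, and $k$ to be chosen so that the growth of $\Phi_t$ never outruns the shift $l_t$; tracking exactly which power of $\|A\|$ enters at each stage is the calculation that produces the sharp $1/\|A\|^2$ density in the final conclusion.
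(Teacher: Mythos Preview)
Your barrier setup has a genuine gap. You correctly note that $\|(AJ_\sigma)^{-1}\|_{\S_\infty}^{2} = 1/\lambda_{\min}\big((AJ_\sigma)^*(AJ_\sigma)\big)$, but then you switch to $\lambda_{\min}\bigl(\sum_{j \in \sigma} v_j v_j^\top\bigr)$, which is $\lambda_{\min}\big((AJ_\sigma)(AJ_\sigma)^*\big)$ --- an $m \times m$ matrix of rank at most $|\sigma| = k$. Since $\|A\| \ge \|Ae_1\|_2 = 1$ forces $k \le cm/\|A\|^2 < m$, this matrix is rank-deficient and $\lambda_{\min}(B_t) = 0$ for every $t \le k$. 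Your invariant $\lambda_{\min}(B_t) > l_t = t\delta$ with $\delta > 0$ therefore fails already at $t = 1$, and the potential $\trace\big((B_t - l_t I)^{-1}\big)$ is not controlling what you want. The correct target is the $k$-th largest eigenvalue of $\sum_{j \in \sigma} v_j v_j^\top$, not its smallest. The actual Spielman--Srivastava argument starts the barrier at a \emph{negative} value $l_0 < 0$ (so that $\Phi_0 = -m/l_0$ is finite and positive) and maintains the more delicate invariant that the $t$ nonzero eigenvalues of $B_t$ all exceed $l_t$; handling the potential as $l_t$ crosses zero --- where $m - t$ terms of the trace become negative --- is precisely the subtlety your sketch elides, and without it the Sherman--Morrison averaging step you describe cannot close.

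Separately, the paper does not prove Theorem~\ref{th-bourgain-tzafriri} via the barrier method. It observes that Bourgain--Tzafriri is the special case $\e = \tfrac12$ of Theorem~\ref{th-spielman-srivastava}, and its own self-contained route to restricted invertibility (Theorem~\ref{thm:full column rank}) proceeds instead through Giannopoulos' iterated Sauer--Shelah extraction (Lemma~\ref{lem:induction}) combined with the Little Grothendieck Inequality and Pietsch Domination (Lemma~\ref{lem:grothendieck pietsch}) to upgrade $\ell_2 \to \ell_1$ bounds on the inverse to operator-norm bounds.
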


In what follows, for $p\in [1,\infty]$ and $m\in \N$ the $\ell_p$ norm of a vector $x\in \R^m$ will be denoted as usual by $\|x\|_{p}$. Thus $\|x\|_2$ is the Euclidean norm of $x$.  We shall also denote (as usual) by $\ell_p^m$ the normed space $\R^m$ equipped with the $\ell_p$ norm. The standard scalar product on $\R^m$ will be denoted $\langle\cdot,\cdot\rangle$. For $k,m,n\in \N$ and a $k$-dimensional subspace $V\subset \R^m$, the Schatten--von Neumann $p$ norm of a linear operator  $A:V\to \R^n$ will be denoted below by $\|A\|_{\S_p}$. Thus
$$
\|A\|_{\S_p}\eqdef \left(\trace(A^*A)^{\frac{p}{2}}\right)^{\frac{1}{p}}= \bigg(\sum_{j=1}^{k} \ss_j(A)^{p}\bigg)^{\frac{1}{p}},
$$
where  $\ss_1(A)\ge \ss_2(A)\ge \ldots\ge \ss_k(A)$ denote the singular values of $A$, i.e., they are the (decreasing rearrangement of the) eigenvalues of the positive semidefinite operator $\sqrt{A^*A}:V\to V^*$. Thus $\|A\|_{\S_\infty}=\ss_1(A)$ is the operator norm of $A$. Also, $\|A\|_{\S_2}$ is the Hilbert--Schmidt norm of $A$, i.e., for every orthonormal basis $u_1,\ldots,u_k$ of $V$ we have $\|A\|_{\S_2}^2=\sum_{i=1}^k\sum_{j=1}^n \langle Au_i,e_j\rangle^2=\sum_{i=1}^k\|Ae_i\|_2^2$.  Below it will sometimes be convenient to denote the smallest singular value of $A$ by $\ss_{\min}(A)=\ss_k(A)$. Thus $A$ is injective if and only if $\ss_{\min}(A)>0$, in which case $\|A^{-1}\|_{\S_\infty}=1/\ss_{\min}(A)$.

Given $m\in \N$ and $\sigma\subset \m$ it will be convenient to denote the formal identity from $\R^\sigma$ to $\R^m$ by $J_\s:\R^\sigma\to \R^m$, i.e., $J_\s((a_j)_{j\in \sigma})=\sum_{j\in \sigma} a_je_j$ for every $(a_j)_{j\in \sigma}\in \R^\s$. With this notation, given an operator $A:\R^m\to \R^n$ that is injective on $\R^\sigma$ we can consider the operator $(AJ_\sigma)^{-1}:A(\R^\sigma)\to \R^\s$. We shall sometimes drop the need to mention explicitly that $A$ is injective on $\R^\s$ by adhering to the convention that if $A$ is not injective on $\R^\sigma$ then $\|(AJ_\s)^{-1}\|_{\S_\infty}=\infty$.

Using the above notation,  Theorem~\ref{th-bourgain-tzafriri}  asserts that if $A:\R^m\to \R^m$ is a linear operator that satisfies $\|Ae_j\|_2=1$ for all $j\in \m$ then there exists  $\s\subset \m$ with $|\s|\gtrsim m/\|A\|_{\S_\infty}$ such that  $\|(AJ_\s)^{-1}\|_{\S_\infty}\lesssim 1$, or equivalently $\ss_{\min}(AJ_\s)\gtrsim 1$. Here, and in what follows, we use the following standard asymptotic notation. Given two quantities $K,L\in \R$ the notation $K\lesssim L$ (respectively $K\gtrsim L$) means that there exists a universal constant $c\in (0,\infty)$ such that $K\le cL$ (respectively $K\ge cL$). The notation $K\asymp L$ means that both $K\lesssim L$ and $K\gtrsim L$ hold true.

The following theorem is a useful strengthening of the Bourgain--Tzafriri Restricted Invertibility Principle that was discovered by Vershynin in~\cite{Ver01}.

\begin{theorem}[Vershynin]\label{th-vershynin} There exists a universal constant $c\in (0,\infty)$ with the following property. Fix $k,m,n\in \N$. Let $A:\R^m\to \R^n$ be a linear operator with $\|Ae_j\|_2=1$ for all $j\in \m$. Also, let $\Delta:\R^n\to \R^n$ be a positive definite diagonal operator, i.e., there exist $d_1,\ldots,d_n\in (0,\infty)$ such that $\Delta x=(d_1x_1,\ldots,d_n x_n)$ for every $x=(x_1,\ldots,x_n)\in \R^n$. Suppose that $k<\|A\Delta\|_{\S_2}^2/\|A\Delta\|_{\S_\infty}^2$ and write $k=(1-\e)\|A\Delta\|_{\S_2}^2/\|A\Delta\|_{\S_\infty}^2$ where $\e\in (0,1)$ (thus $\e=1-k\|A\Delta\|_{\S_\infty}^2/\|A\Delta\|_{\S_2}^2$).  Then there exists a subset $\s\subset \m$ with $|\s|=k$ such that $\|(AJ_\s)^{-1}\|_{\S_\infty}\le \e^{-c\log(1/\e)}$.
\end{theorem}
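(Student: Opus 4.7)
My plan is to combine a weighted random selection step (which incorporates the diagonal operator $\Delta$) with an iterative peeling argument scheduled so that the accumulated loss in the inverse-operator-norm is of order $\e^{-c\log(1/\e)}$. Write $G = A^*A$; the assumption $\|Ae_j\|_2 = 1$ reads $G_{jj} = 1$, and one has $\|(AJ_\s)^{-1}\|_{\S_\infty}^2 = 1/\ss_{\min}(J_\s^* G J_\s)$. Setting $\rho = \|A\Delta\|_{\S_2}^2/\|A\Delta\|_{\S_\infty}^2$, the task is to exhibit $\s \subset \m$ of size $k = (1-\e)\rho$ with $\ss_{\min}(J_\s^* G J_\s) \gtrsim \e^{c\log(1/\e)}$.

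\textbf{Step 1 (Single-shot weighted selection).} For a slack $\eta \in (0,1)$, sample a random $\tau \subset \m$ by including each $j$ independently with probability $p_j \asymp (1-\eta)d_j^2/\|A\Delta\|_{\S_\infty}^2$, so that $\E|\tau| \geq (1-\eta)\rho$. The random positive operator $\sum_{j\in\tau} d_j^2\, Ae_j e_j^* A^*$ has expectation $(1-\eta) A\Delta^2 A^*$, whose spectral norm is $(1-\eta)\|A\Delta\|_{\S_\infty}^2$. A symmetrization argument (replacing the centred selectors $(\mathbf{1}_{j\in\tau}-p_j)_j$ with an independent Rademacher copy) followed by a noncommutative Khintchine / Grothendieck-type bound controls the $\S_\infty$-deviation in terms of the $\S_\infty$-budget. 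Translating this back to the unweighted Gram matrix via a further subset of size $k$ yields $\ss_{\min}(J_\s^* G J_\s) \gtrsim \eta$ with positive probability.

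\textbf{Step 2 (Iteration).} Step 1 alone gives only $\|(AJ_\s)^{-1}\|_{\S_\infty} \lesssim \eta^{-1/2}$, weaker than the claim. To boost the exponent, run $N = \lceil\log(1/\e)\rceil$ rounds with per-round slack $\eta_i = \e/N$. At round $i$, restrict $A$ to the coordinates $\s_{i-1}$ surviving the previous round; the restriction still has unit columns, so Step 1 applies and produces $\s_i \subset \s_{i-1}$ with $|\s_{i-1}| - |\s_i| \leq \eta_i \rho$ and an additional multiplicative factor $\eta_i$ in the bound on $\ss_{\min}$. After $N$ rounds, $\sum_{i=1}^N \eta_i \leq \e$ so $|\s_N| \geq k$, while $\ss_{\min}(J_{\s_N}^* G J_{\s_N}) \gtrsim \prod_{i=1}^N \eta_i = (\e/\log(1/\e))^{\log(1/\e)} \geq \e^{c\log(1/\e)}$. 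Taking square roots yields the claimed bound.

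\textbf{Main obstacle.} The crux is Step 1: the norm controlling the inverse in the conclusion is unweighted (on the restriction $AJ_\s$), whereas the budget in the hypothesis is phrased in the \emph{weighted} Schatten norms of $A\Delta$. Bridging this mismatch calls for an operator-valued sampling inequality showing that selecting columns $j$ with probabilities proportional to $d_j^2$ produces a random principal submatrix of $G$ whose spectrum is controlled by $\|A\Delta\|_{\S_\infty}^2$ rather than by the worse quantity $\max_j d_j^2$. A secondary but nontrivial issue is verifying that the iteration in Step 2 genuinely multiplies the lower bounds on $\ss_{\min}$ across rounds, which requires the compressed operator at each stage to retain a diagonal structure compatible with the hypotheses of Step 1; once this is arranged, the scheduling of the $\eta_i$ and the combinatorial bookkeeping are routine.
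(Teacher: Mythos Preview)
Your proposal contains a genuine internal inconsistency that reveals a gap in the argument.

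If Step~1 truly delivers $\ss_{\min}(J_\s^* G J_\s)\gtrsim\eta$ for an \emph{arbitrary} slack $\eta\in(0,1)$ with $|\s|\ge(1-\eta)\rho$, then taking $\eta=\e$ already gives $\|(AJ_\s)^{-1}\|_{\S_\infty}\lesssim\e^{-1/2}$, which is the Spielman--Srivastava bound and is \emph{stronger}, not weaker, than the $\e^{-c\log(1/\e)}$ you are trying to prove. So either Step~1 is overstated, or Step~2 is superfluous. In fact Step~1 is overstated: random selection of the Bourgain--Tzafriri/Kashin--Tzafriri type yields $\ss_{\min}\gtrsim 1$ only when $|\s|$ is a \emph{fixed constant fraction} of $\rho$; it does not give a bound that degrades linearly in the slack. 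Your symmetrization and Khintchine estimate controls $\|AJ_\tau J_\tau^*A^*\|_{\S_\infty}$, i.e.\ the \emph{top} of the spectrum of $J_\tau^*GJ_\tau$, which says nothing directly about $\ss_{\min}$.

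The iteration in Step~2 is also wrong as written. You claim the $\ss_{\min}$ bounds \emph{multiply} across rounds, but $J_{\s_i}^*GJ_{\s_i}$ is a principal submatrix of $J_{\s_{i-1}}^*GJ_{\s_{i-1}}$, and by Cauchy interlacing $\ss_{\min}$ can only \emph{increase} under such restriction. There is no mechanism in your description for a product $\prod_i\eta_i$ to appear.

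The paper does not prove this theorem; it quotes it from Vershynin's original work and describes the approach there: one uses Bourgain--Tzafriri as a black box (giving a constant-fraction subset with $\ss_{\min}\gtrsim 1$), combined with a Kashin--Tzafriri lemma, and iterates by passing to the \emph{orthogonal complement} of the span of the already-selected columns, re-normalizing, and repeating. The multiplicative loss $\e^{-c\log(1/\e)}$ comes from the renormalization constants accumulated over $\asymp\log(1/\e)$ rounds, and the Hilbert--Schmidt norm is what makes this iteration tractable. Your Step~2 misses this orthogonal-complement-and-renormalize mechanism entirely; simply re-applying Step~1 to the surviving coordinates does not generate the needed loss factors, nor does it explain why the stable-rank hypothesis is inherited at each stage.
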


For  a linear operator $T:\R^m\to \R^n$, the quantity $\|T\|_{\S_2}^2/\|T\|_{\S_\infty}^2$ is often called the {\em stable rank} of $T$, though  this terminology sometimes also refers to the quantity $\|T\|_{\S_1}/\|T\|_{\S_\infty}$. In both cases, the use of the term `stable' in this context expresses the fact that the quantity in question is a robust replacement for the rank of $T$ in the sense that the rank of $T$ could be large due to the fact that $T$ has many positive but nevertheless very small singular values, while if the stable rank of $T$ is large then its singular values are large on average. Below we shall use the terminology `stable rank' exclusively for the quantity $\|T\|_{\S_2}^2/\|T\|_{\S_\infty}^2$, which we denote by $\srank(T)=\|T\|_{\S_2}^2/\|T\|_{\S_\infty}^2$.

Theorem~\ref{th-bourgain-tzafriri} coincides with the special case $\e=\frac12$ and $\Delta=I_n$ of Theorem~\ref{th-vershynin}, where $I_n$ is the identity operator on $\R^n$. However,  Theorem~\ref{th-vershynin}  improves over Theorem~\ref{th-bourgain-tzafriri} in three ways that are important for geometric applications. Firstly, Theorem~\ref{th-vershynin} treats rectangular matrices while Theorem~\ref{th-bourgain-tzafriri} treats only the case $m=n$. Secondly, even in the special case $\Delta=I_n$ of Theorem~\ref{th-vershynin} the size of the subset $\s\subset \m$ is allowed to be arbitrarily close to $\srank(A)$, while in Theorem~\ref{th-bourgain-tzafriri}  it can only be taken to be a  constant multiple of $\srank(A)$. Lastly, Theorem~\ref{th-vershynin} actually allows for  the size of the subset $\s\subset \m$ to be arbitrarily close to the supremum of $\srank(A\Delta)$ over all positive definite diagonal operators $\Delta:\R^m\to \R^m$, a  quantity that could be much larger than $\srank(A)$.

\begin{remark}
Theorem~\ref{th-vershynin} is often stated in the literature as a subset selection principle for John decompositions of the identity. Namely, suppose that $k,m,n\in \N$ and $x_1,\ldots,x_m\in \R^n\setminus \{0\}$ satisfy $\sum_{j=1}^m \langle x_j,y\rangle^2=\|y\|_2^2$ for all $y\in \R^n$. Equivalently, we have $\sum_{j=1}^m x_j\otimes x_j=I_n$, where  for $x,y\in \R^n$ the rank-one operator $x\otimes y:\R^n\to \R^n$ is defined as usual by setting $(x\otimes y)(z)=\langle x,z\rangle y$ for every $z\in \R^n$. Suppose that $T:\R^n\to \R^n$ is a linear operator satisfying $Tx_1,\ldots,Tx_m\neq 0$, and that $k=(1-\e)\srank(T)$ for some $\e\in (0,1)$. Then there exists $\sigma\subset \m$ with $|\s|=k$ such that
$$
\forall\, \{a_j\}_{j\in \s}\subset \R,\qquad \bigg\|\sum_{j\in \s} \frac{a_j}{\|Tx_j\|_2}Tx_j \bigg\|_2\ge  \e^{c\log(1/\e)}\bigg(\sum_{j\in \s} a_j^2\bigg)^{\frac12}.
$$
The above formulation is equivalent to Theorem~\ref{th-vershynin} as stated in terms of rectangular matrices by considering the operator $A:\R^m\to \R^n$ that is given by $Ae_j=Tx_j/\|Tx_j\|_2$ for every $j\in \m$.
\end{remark}

A recent breakthrough of Spielman--Srivastava~\cite{SS12}, that relies nontrivially on a remarkable method for sparsifying quadratic forms that was developed by Batson--Spielman--Srivastava~\cite{BSS12} (see also the survey~\cite{Nao12}), yielded the following improved restricted invertibility principle, via techniques that are entirely different from those used by Bourgain--Tzafriri and Vershynin.

\begin{theorem}[Spielman--Srivastava]\label{th-spielman-srivastava} Suppose that $k,m,n\in \N$ and let $A:\R^m\to \R^n$ be a linear operator such that $k<\srank(A)$. Write $k=(1-\e)\srank(A)$ where $\e\in (0,1)$. Then there exists a subset $\s\subset \m$ with $|\s|=k$ such that
$$
\|(AJ_\s)^{-1}\|_{\S_\infty}\le \frac{1}{1-\sqrt{1-\e}}\cdot \frac{\sqrt{m}}{\|A\|_{\S_2}}\le\frac{2\sqrt{m}}{\e\|A\|_{\S_2}}.
$$
\end{theorem}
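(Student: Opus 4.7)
The plan is to prove Theorem~\ref{th-spielman-srivastava} by the deterministic greedy barrier-function method of Spielman--Srivastava, which builds on the sparsification framework of Batson--Spielman--Srivastava. Write $v_j \eqdef Ae_j \in \R^n$ for $j \in \m$, so that $\sum_{j=1}^m v_j v_j^* = AA^*$ and $\sum_{j=1}^m \|v_j\|_2^2 = \|A\|_{\S_2}^2$. Since $\ss_{\min}(AJ_\s)^2$ equals the smallest nonzero eigenvalue of $W_\s \eqdef \sum_{j\in\s} v_j v_j^*$, it suffices to exhibit $\s\subset\m$ with $|\s|=k$ such that every nonzero eigenvalue of $W_\s$ is at least $\lambda^2 \eqdef (1-\sqrt{1-\e})^2\|A\|_{\S_2}^2/m$.

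Construct $\s = \{j_1,\ldots,j_k\}$ one index at a time, let $W_i \eqdef \sum_{t=1}^i v_{j_t} v_{j_t}^*$, let $\Pi_i$ project onto the range of $W_i$, and carry a shift $l_i\in\R$. Use the lower barrier
$$
\Phi^l(W) \eqdef \trace\!\bigl((W - l\Pi_W)^{\dagger}\bigr) = \sum_{\mu \in \mathrm{spec}(W),\, \mu > 0} \frac{1}{\mu - l},
$$
which is finite precisely when every nonzero eigenvalue of $W$ exceeds $l$. Maintain the invariants that (i) every nonzero eigenvalue of $W_i$ exceeds $l_i$ and (ii) $\Phi^{l_i}(W_i) \le \Phi_0$ for a fixed threshold $\Phi_0>0$ to be calibrated; initialise $W_0 = 0$, $l_0$ suitably negative, and $\Phi^{l_0}(W_0) = 0$. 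At step $i$, increase $l$ by a universal amount $\delta > 0$ (so $l_i = l_{i-1}+\delta$) and choose $j_i$ by pigeonhole. The Sherman--Morrison identity gives
$$
\Phi^{l_i}\!\bigl(W_{i-1} + v v^*\bigr) \;=\; \Phi^{l_i}(W_{i-1}) \;-\; \frac{v^*(W_{i-1} - l_i\Pi_{i-1})^{-2} v}{1 + v^*(W_{i-1} - l_i\Pi_{i-1})^{-1} v},
$$
and averaging the numerator of the subtracted term over $v \in \{v_1,\ldots,v_m\}$ produces $\trace\!\bigl(AA^*(W_{i-1}-l_i\Pi_{i-1})^{-2}\bigr)$, by virtue of $\sum_{j=1}^m v_j v_j^* = AA^*$. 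Bounding the denominator uniformly in terms of $\|A\|_{\S_\infty}^2$ and $\Phi_0$ and applying pigeonhole produces an index $j_i$ whose Sherman--Morrison decrease beats the average, compensating the shift-induced increase $\Phi^{l_i}(W_{i-1}) - \Phi^{l_{i-1}}(W_{i-1})$.

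The main obstacle is the calibration of $\Phi_0$ and $\delta$ so that a single $\delta$ works at every one of the $k$ iterations and $l_k = l_0 + k\delta \ge \lambda^2$. The trace Cauchy--Schwarz inequality $\trace(AA^*(W_{i-1}-l_i\Pi_{i-1})^{-1})^2 \le \trace(AA^*(W_{i-1}-l_i\Pi_{i-1})^{-2}) \cdot \trace(AA^*)$ converts the average decrease into a quantity controlled by $\|A\|_{\S_2}^2$ and the invariant $\Phi_0$; the shift-induced increase of $\Phi^{\cdot}(W_{i-1})$ equals $\sum_\mu \delta/((\mu-l_{i-1})(\mu-l_i))$, which is likewise controllable by $\Phi_0$. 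Balancing both sides and optimising in $\Phi_0$ yields a shift $\delta$ satisfying $k\delta \ge (1-\sqrt{1-\e})^2\|A\|_{\S_2}^2/m$, where the factor $1 - \sqrt{1-\e}$ emerges from AM--GM applied to a quadratic expression in $\sqrt{\Phi_0}$. Handling the pseudoinverses carefully (for instance by regularising with $\eta I$ and letting $\eta\to 0$, or by tracking the ranges of the $W_i$ explicitly) is a notational rather than a conceptual issue.
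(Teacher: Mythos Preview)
The paper does not contain its own proof of Theorem~\ref{th-spielman-srivastava}; the result is quoted from~\cite{SS12} as background, and the paper's own contributions (Theorems~\ref{thm:main rank theorem}, \ref{thm:our schatten}, \ref{thm:full column rank}, \ref{thm:MSS version}) recover and extend it by entirely different routes---iterated Sauer--Shelah combined with Little Grothendieck and Pietsch domination, and separately the method of interlacing polynomials---but only up to universal constants, not with the sharp factor $1/(1-\sqrt{1-\e})$. Your sketch is, in broad strokes, the original Spielman--Srivastava barrier argument, so there is no ``paper's proof'' to compare against beyond the citation.

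That said, your specific setup has a real gap, not a notational one. You define the potential on the \emph{nonzero} spectrum via the pseudoinverse $(W-l\Pi_W)^\dagger$ and set $\Phi^{l_0}(W_0)=0$. But at each step the chosen vector $v_{j_i}$ will typically have a component orthogonal to the range of $W_{i-1}$ (indeed it must, since you need the rank of $W_k$ to reach $k$); when that happens a \emph{new} nonzero eigenvalue is born, and your Sherman--Morrison identity---which lives entirely on the old range---simply does not see it. Once $l_i>0$, nothing prevents that new eigenvalue from lying below $l_i$, destroying invariant~(i). The averaging step ``summing the numerator over $j$ gives $\trace(AA^*(W_{i-1}-l_i\Pi_{i-1})^{-2})$'' is therefore unjustified, because the displayed identity is false for those $v_j$ with a nontrivial orthogonal part. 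The regularisation you dismiss as cosmetic is in fact the substance of the correct proof: one works with the \emph{full} inverse $(W-lI_n)^{-1}$ on $\R^n$ (or on the range of $A$), starts with $l_0<0$ so that $W_0-l_0 I_n=-l_0 I_n\succ 0$ and $\Phi^{l_0}(0)=-n/l_0$ is finite, and then Sherman--Morrison applies verbatim at every step with no rank jumps to track. The calibration then balances this nonzero initial potential against the per-step gain, and the choice $l_0,\delta$ that makes $l_0+k\delta=(1-\sqrt{1-\e})^2\|A\|_{\S_2}^2/m$ emerges. With that correction the outline you give is the standard one and goes through.
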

In the setting of Theorem~\ref{th-spielman-srivastava},  since $\|A\|_{\S_2}=\sqrt{m}$ when the columns of $A$ have unit Euclidean norm,   Theorem~\ref{th-bourgain-tzafriri} is a special case of Theorem~\ref{th-spielman-srivastava}. As in the case $\Delta=I_n$ of Theorem~\ref{th-vershynin}, the statement of Theorem~\ref{th-spielman-srivastava} has the additional feature that the subset $\s\subset\m$ can have size arbitrarily close to $\srank(A)$. Moreover, in  Theorem~\ref{th-spielman-srivastava} the columns of $A$ need not have unit Euclidean norm, and the upper bound on $\|(AJ_\s)^{-1}\|_{\S_\infty}$ in terms of $\e$ is much better in  Theorem~\ref{th-spielman-srivastava} than the corresponding bound in the case $\Delta=I_n$ of Theorem~\ref{th-vershynin}; in fact this bound is asymptotically sharp~\cite{BHKW88} as $\e\to 0$. An additional feature of Theorem~\ref{th-spielman-srivastava} is that its proof in~\cite{SS12} yields a deterministic polynomial time algorithm for finding the subset $\s$, while previous to~\cite{SS12} only a randomized polynomial time algorithm was available~\cite{Tro09}. Theorem~\ref{th-vershynin} does have a feature that Theorem~\ref{th-spielman-srivastava}  does not, namely the size of the subset $\s\subset \m$ can be taken to be arbitrarily close to the supremum of $\srank(A\Delta)$ over all positive definite diagonal operators $\Delta:\R^m\to \R^m$, albeit with worse dependence on $\e$. However, in~\cite{You14-cube} it was shown how to combine the features of Theorem~\ref{th-vershynin} and Theorem~\ref{th-spielman-srivastava} so as to yield this stronger guarantee with the better  dependence on $\e$ that is asserted in Theorem~\ref{th-spielman-srivastava}. This improvement is important for certain geometric applications~\cite{You14-cube}. The new results that are presented below have this stronger ``weighted" feature, but for the sake of simplicity of the initial discussion in the Introduction  we shall first present all the ensuing statements in their ``unweighted" form that corresponds to the way Theorem~\ref{th-spielman-srivastava} is stated above.

A different proof of Theorem~\ref{th-spielman-srivastava} in the special case $AA^*=I_n$ was found by Marcus, Spielman and Srivastava in~\cite{MSS14}, using their powerful method of interlacing polynomials~\cite{MSS15-ramanujan,MSS15-kadison}. In fact, their forthcoming work~\cite{MSS16} obtains Theorem~\ref{thm:MSS schatten}  below, which yields for the first time a restricted invertibility principle for subsets that can be asymptotically larger than the stable rank, with their size depending on the ratio of the Hilbert--Schmidt norm and the Schatten--von Neumann $4$ norm. This result was announced  by Srivastava in his talk at the conference {\em Banach Spaces: Geometry and Analysis}  (Hebrew University, May 2013), and it is actually a precursor to the outstanding subsequent work~\cite{MSS15-kadison}. Its proof  will appear for the first time in the forthcoming preprint~\cite{MSS16}, but we confirmed with the authors that they obtain  Theorem~\ref{thm:MSS schatten} as stated below.

\begin{theorem}[Marcus--Spielman--Srivastava]\label{thm:MSS schatten} Suppose that $k,m,n\in \N$ and let $A:\R^m\to \R^n$ be a linear operator such that $k<\frac14(\|A\|_{\S_2}/\|A\|_{\S_4})^4$. Define $\e\in (3/4,1)$ by $k=(1-\e)\|A\|_{\S_2}^4/\|A\|_{\S_4}^4$.
Then there exists a subset $\s\subset\m $ with $|\s|=k$ such that
\begin{equation}\label{eq:MSS S4}
\|(AJ_\s)^{-1}\|_{\S_\infty}\le \frac{1}{\sqrt{1-2\sqrt{1-\e}}}\cdot \frac{\sqrt{m}}{\|A\|_{\S_2}}.
\end{equation}
\end{theorem}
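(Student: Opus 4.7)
My strategy would follow the interlacing-polynomial method of Marcus--Spielman--Srivastava. To each $\s\subset\m$ with $|\s|=k$ I would assign the characteristic polynomial
\[
p_\s(x) = \det\!\bigl(xI_k - (AJ_\s)^*(AJ_\s)\bigr),
\]
whose smallest root equals $\ss_{\min}(AJ_\s)^2 = \|(AJ_\s)^{-1}\|_{\S_\infty}^{-2}$. The first step is to verify that $\{p_\s\}_{|\s|=k}$ is an interlacing family with respect to the tree that builds a $k$-subset by adjoining coordinates one at a time; the required real-rootedness of every internal average and the existence of common interlacers should follow from the rank-one update lemma of earlier MSS work. Granted this, the interlacing-family theorem produces a specific $\s^\star\subset\m$ of size $k$ with $\lambda_{\min}(p_{\s^\star})\ge \lambda_{\min}(\overline p)$, where $\overline p(x) = \Avg_{|\s|=k} p_\s(x)$, so it suffices to lower bound the smallest root of the single polynomial $\overline p$.

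The next step is to identify $\overline p$ in closed form. Expanding $\det(xI_k-(A^*A)_\s)$ in elementary symmetric polynomials of the eigenvalues of $(A^*A)_\s$, applying the standard principal-minor identity $e_j((A^*A)_\s) = \sum_{|\tau|=j,\,\tau\subset\s}\det((A^*A)_\tau)$, and exchanging the order of summation produces
\[
\overline p(x) = \frac{k!}{m!}\cdot \frac{d^{m-k}}{dx^{m-k}}\prod_{i=1}^m\bigl(x-\ss_i(A)^2\bigr).
\]
The theorem thus reduces to a purely univariate statement: lower bound the smallest root of $p^{(m-k)}$, where $p(x)=\prod_i(x-\ss_i(A)^2)$, in terms of $\|A\|_{\S_2}^2=\sum_i\ss_i(A)^2$ and $\|A\|_{\S_4}^4=\sum_i\ss_i(A)^4$.

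For this univariate bound I would invoke the barrier-function machinery of Batson--Spielman--Srivastava. For any real-rooted polynomial $q$ with roots $\mu_1,\dots,\mu_d$ and any $x<\min_j\mu_j$, set
\[
M_q(x)=\sum_{j=1}^d\frac{1}{\mu_j-x},\qquad N_q(x)=\sum_{j=1}^d\frac{1}{(\mu_j-x)^2},
\]
so that a direct computation yields the one-derivative identity $M_{q'}(x)=M_q(x)-N_q(x)/M_q(x)$. The plan is to choose a trajectory $x_0<x_1<\cdots<x_{m-k}$ and shifts $\d_j=x_{j+1}-x_j$ under which, by induction on $j$, one maintains $x_j<\lambda_{\min}(p^{(j)})$ while simultaneously tracking the coupled pair $(M_{p^{(j)}}(x_j),N_{p^{(j)}}(x_j))$. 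The decisive refinement over the $\S_2$-argument of Spielman--Srivastava is that the second-moment quantity $N$ is retained rather than dominated by $M^2$: its initial value is controlled by $\|A\|_{\S_4}$ via a Cauchy--Schwarz-type estimate, and this is precisely what allows $k$ to scale as $(\|A\|_{\S_2}/\|A\|_{\S_4})^4$ rather than only as $(\|A\|_{\S_2}/\|A\|_{\S_\infty})^2$. Saturating the coupled recursion should deliver $\lambda_{\min}(\overline p)\ge\bigl(1-2\sqrt{1-\e}\bigr)\|A\|_{\S_2}^2/m$, from which \eqref{eq:MSS S4} follows by reciprocating and taking square roots. The main obstacle lies in precisely this last barrier calculation: the shifts $\d_j$ must be tuned so that the exact constant $1-2\sqrt{1-\e}$ emerges cleanly, and any slack step of the form $N\le M^2$ anywhere along the trajectory would collapse the estimate back to the $\S_\infty$ regime and erase the $\S_4$ improvement.
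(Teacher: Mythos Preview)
The paper does not give its own proof of this theorem: Theorem~\ref{thm:MSS schatten} is quoted as a result of Marcus--Spielman--Srivastava, with the proof deferred to their forthcoming preprint~\cite{MSS16}. So there is no ``paper's own proof'' to compare against directly.

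Your outline is a reasonable sketch of the interlacing-polynomial route and is broadly in the MSS spirit. Two remarks on the execution, though.

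\emph{A cleaner barrier step.} Your plan to propagate the coupled pair $(M_q,N_q)$ through $m-k$ derivative/shift steps does not obviously close: the identity $M_{q'}=M_q-N_q/M_q$ is fine, but to iterate you also need control on $N_{q'}$, which brings in third moments, and so on. There is a much simpler way to reach the exact constant. Using the with-replacement setup of Section~\ref{sec:MSS} of the paper (which is the proof of Theorem~\ref{thm:MSS version}), one already has, for some $\s$ with $|\s|=k$,
\[
\ss_{\min}(AJ_\s)^2\ \ge\ \frac{1}{m}\sup_{\phi>0}\Biggl\{-\frac{k}{\phi}+\sum_{i=1}^{\rank(A)}\frac{\ss_i(A)^2}{1+\phi\,\ss_i(A)^2}\Biggr\},
\]
which is exactly~\eqref{eq:with transform} in Remark~19. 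Now apply the elementary inequality $\dfrac{s}{1+\phi s}\ge s-\phi s^2$ (valid for $s,\phi\ge 0$) to get
\[
\ss_{\min}(AJ_\s)^2\ \ge\ \frac{1}{m}\sup_{\phi>0}\Bigl\{-\tfrac{k}{\phi}+\|A\|_{\S_2}^2-\phi\,\|A\|_{\S_4}^4\Bigr\}
=\frac{\|A\|_{\S_2}^2-2\sqrt{k}\,\|A\|_{\S_4}^2}{m}
=\bigl(1-2\sqrt{1-\e}\bigr)\frac{\|A\|_{\S_2}^2}{m},
\]
the optimum being at $\phi=\sqrt{k}/\|A\|_{\S_4}^2$. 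This is precisely~\eqref{eq:MSS S4}, with no need to track $N$ separately. So the $\S_4$ improvement comes from a single one-line tangent-line bound on $s\mapsto s/(1+\phi s)$, not from a refined two-parameter barrier recursion.

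\emph{Relation to the paper's own contribution.} The paper's \emph{new} approach to the $\S_4$ (and more generally $\S_p$) regime is entirely different from the interlacing route you propose: it proves Theorem~\ref{thm:main rank theorem} via the volumetric Lemma~\ref{lem:max volume} combined with Theorem~\ref{thm:full column rank}, the latter established through the Giannopoulos iteration of Sauer--Shelah together with the Little Grothendieck Inequality and Pietsch Domination. This yields Theorem~\ref{thm:our schatten}, which recovers Theorem~\ref{thm:MSS schatten} only up to universal constants (and in fact extends it to all $p>2$ and all $\e\in(0,1)$). Your method, by contrast, gives the sharp constant $1-2\sqrt{1-\e}$ but only for $p=4$ and $\e>3/4$.
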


Theorem~\ref{thm:MSS schatten} can be much better than the previously known restricted invertibility principles at detecting large well-invertible sub-matrices. To state a concrete example, suppose that the singular values of $A$ are $\ss_1(A)\asymp\sqrt[4]{m}$ and $\ss_2(A)\asymp\ss_3(A)\asymp\ldots\asymp\ss_m(A)=1$. Then Theorem~\ref{th-spielman-srivastava}  yields a subset $\sigma\subset \m$ of size of order $\sqrt{m}$ for which the operator norm of the inverse of $AJ_\s$ is $O(1)$, while Theorem~\ref{thm:MSS schatten} yields such a subset whose size is at least a constant multiple of $m$.

The restriction $k<\frac14(\|A\|_{\S_2}/\|A\|_{\S_4})^4$ in Theorem~\ref{thm:MSS schatten} ensures that $\e>3/4$, so that the quantity appearing under the square root in~\eqref{eq:MSS S4} is positive. Thus, in the statement of Theorem~\ref{thm:MSS schatten}  $k$ cannot be arbitrarily close to the ``modified stable rank" $\|A\|_{\S_2}^4/\|A\|_{\S_4}^4$, but this will be remedied below.

It is important to note that the quantity $\|A\|_{\S_2}^4/\|A\|_{\S_4}^4$ is always at least $\srank(A)$. More generally, given $p\in (2,\infty]$, if we define the $p$-stable rank of $A$ to be the quantity
\begin{equation}\label{eq:def p srank}
\srank_p(A)\eqdef \left(\frac{\|A\|_{\S_2}}{\|A\|_{\S_p}}\right)^{\frac{2p}{p-2}},
\end{equation}
then in particular $\srank_4(A)=\|A\|_{\S_2}^4/\|A\|_{\S_4}^4$ and $\srank_\infty(A)=\srank(A)$. We claim that
\begin{equation}\label{eq:p monotonicity}
p\ge q>2\implies \srank_p(A)\le \srank_q(A),
\end{equation}
Indeed, by direct application of H\"older's inequality we have
$$
\|A\|_{\S_q}\le \|A\|_{\S_2}^{\frac{2(p-q)}{q(p-2)}}\cdot \|A\|_{\S_p}^{\frac{p(q-2)}{q(p-2)}},
$$
which simplifies to give~\eqref{eq:p monotonicity}. The limit as $p\to 2^+$ of $\srank_p(A)$ can be computed explicitly, yielding the quantity below, denoted $\entrank(A)$, which we naturally call the entropic stable rank of $A$.

\begin{multline*}
\entrank(A)\eqdef \lim_{p\to 2^+} \srank_p(A)=\exp\bigg(\log\sum_{j=1}^m \ss_j(A)^2-\frac{2\sum_{j=1}^m\ss_j(A)^2\log \ss_j(A)}{\sum_{j=1}^m \ss_j(A)^2}\bigg)
\\=\exp\bigg(\frac{\trace(A^*A)\log \trace(A^*A)-\trace(A^*A\log(A^*A))}{\trace(A^*A)}\bigg)=\|A\|_{\S_2}^2\prod_{j=1}^m \ss_j(A)^{-\frac{2\ss_j(A)^2}{\|A\|_{\S_2}^2}}.
\end{multline*}

As we shall explain in the next section, here we obtain an improved restricted invertibility theorem that in particular yields a strengthening of Theorem~\ref{thm:MSS schatten} that allows one to make use of the $p$-stable rank of $A$ for every $p>2$, thus producing well-invertible sub-matrices of $A$ of size that can be any integer that is less than the entropic stable rank of $A$.

\subsection{Restricted invertibility in terms of rank} Our main new result is the following theorem.

\begin{theorem}\label{thm:main rank theorem} Suppose that $k,m,n\in \N$. Let $A:\R^m\to \R^n$ be a linear operator with $\rank(A)>k$. Then there exists a subset $\s\subset\m$ with $|\s|=k$ such that
\begin{equation}\label{eq:min over r}
\|(AJ_\s)^{-1}\|_{\S_\infty}\lesssim \min_{r\in \{k+1,\ldots,\rank(A)\}} \sqrt{\frac{mr}{(r-k)\sum_{i=r}^m \ss_i(A)^2}}.
\end{equation}
\end{theorem}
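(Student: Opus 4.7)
The plan is to adapt the method of interlacing polynomials of Marcus--Spielman--Srivastava to the uniform random $k$-subset, reducing the theorem to an estimate on the smallest root of a derivative of a real-rooted polynomial, and then to establish this estimate by a barrier argument in the style of Batson--Spielman--Srivastava~\cite{BSS12}.

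For every $\sigma\subset\m$ with $|\sigma|=k$, let $p_\sigma(x)=\det(xI_k-J_\sigma^*A^*AJ_\sigma)$, so that $\|(AJ_\sigma)^{-1}\|_{\S_\infty}^{-2}$ equals the smallest root of $p_\sigma$. The family $\{p_\sigma\}_{|\sigma|=k}$ forms an interlacing family (this uses the real-stability machinery for uniform $k$-subset sampling that is part of the MSS line of work~\cite{MSS15-kadison}); consequently there exists $\sigma^*$ of size $k$ for which the smallest root of $p_{\sigma^*}$ is at least the smallest root of the averaged polynomial $q=\binom{m}{k}^{-1}\sum_\sigma p_\sigma$. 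A routine computation using principal-minor expansions together with the identity $\sum_{|S|=j}\det((A^*A)_{S,S})=e_j(\ss_1(A)^2,\ldots,\ss_m(A)^2)$ shows that
\begin{equation*}
q(x)=\frac{1}{\binom{m}{k}(m-k)!}\cdot\frac{d^{m-k}}{dx^{m-k}}\prod_{i=1}^m\bigl(x-\ss_i(A)^2\bigr),
\end{equation*}
and it therefore suffices to bound from below the smallest root of $f^{(m-k)}$, where $f(x)=\prod_i(x-\ss_i(A)^2)$.

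This last step is the main technical obstacle, because the target estimate depends on the tail sum $T_r=\sum_{i\ge r}\ss_i(A)^2$ rather than on the full trace $\|A\|_{\S_2}^2$. The tool of choice is the lower-barrier potential $\Phi_g(u)=\sum_{\lambda:g(\lambda)=0}(\lambda-u)^{-1}$, together with the elementary one-step identity $\Phi_{g'}(u)=\Phi_g(u)-\Phi_g'(u)/\Phi_g(u)$. A naive Cauchy--Schwarz bound $\Phi_g(u)^2\le\deg(g)\Phi_g'(u)$ only yields, after $m-k$ differentiations, the Spielman--Srivastava-type estimate proportional to $\|A\|_{\S_2}^2/m$. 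To reach the $r$-sensitive form, the plan is to replace this step by a tail-restricted Cauchy--Schwarz that bounds $\Phi_g'(u)$ from below using only the contribution to $\Phi_g(u)$ coming from the bottom $m-r+1$ roots, while advancing the barrier point $u$ forward dynamically from one derivative to the next as in~\cite{BSS12}. With $u$ chosen at the end of the process to be of order $(r-k)T_r/(mr)$, iterating the resulting sharper recursion should give $\lambda_{\min}(f^{(m-k)})\gtrsim(r-k)T_r/(mr)$; taking the minimum over the admissible values of $r$ then produces~\eqref{eq:min over r}.
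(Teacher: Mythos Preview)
Your route is genuinely different from the paper's. The paper does not use interlacing polynomials for Theorem~\ref{thm:main rank theorem} at all; instead it proceeds in two selection steps. First, a volumetric argument (Lemma~\ref{lem:max volume}) picks a subset $\tau\subset\m$ with $|\tau|=r$ that maximizes $\vol_r(K_\tau)$, and this forces $\|\proj_{E_{\tau\setminus\{j\}}}Ae_j\|_2\ge m^{-1/2}(\sum_{i\ge r}\ss_i(A)^2)^{1/2}$ for every $j\in\tau$. Second, Theorem~\ref{thm:full column rank} (proved by an iterated Sauer--Shelah argument of Giannopoulos combined with the Little Grothendieck Inequality and Pietsch domination) is applied to $AJ_\tau$ to extract $\sigma\subset\tau$ with $|\sigma|=k$ and $\|(AJ_\sigma)^{-1}\|_{\S_\infty}\lesssim\sqrt{r/(r-k)}\cdot\max_j\|\proj_{E_{\tau\setminus\{j\}}}Ae_j\|_2^{-1}$. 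Chaining the two gives~\eqref{eq:min over r} for that $r$; minimizing over $r$ finishes the proof.

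Your proposal, by contrast, tries to get everything in one shot from the expected characteristic polynomial under uniform $k$-subset sampling, reducing the problem to the purely analytic inequality
\[
\lambda_{\min}\Bigl(f^{(m-k)}\Bigr)\ \gtrsim\ \frac{(r-k)\,T_r}{mr},\qquad f(x)=\prod_{i=1}^m\bigl(x-\ss_i(A)^2\bigr),\quad T_r=\sum_{i\ge r}\ss_i(A)^2.
\]
The reduction is fine: the interlacing-family structure for uniform $k$-subsets is standard (strongly Rayleigh measures), and the identification of the averaged polynomial with a constant multiple of $f^{(m-k)}$ is correct. The problem is the last step. What you call a ``tail-restricted Cauchy--Schwarz'' is never made precise, and the sentence ``iterating the resulting sharper recursion should give \ldots'' is an assertion, not an argument. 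The difficulty is real: once you differentiate, the roots of $f^{(j)}$ are no longer the original $\ss_i(A)^2$; there is no persistent ``bottom $m-r+1$ roots'' to which one can restrict a Cauchy--Schwarz step. The barrier potential $\Phi_g(u)$ sums over \emph{all} current roots, and the one-step identity you quote mixes them irretrievably. Tellingly, the paper itself carries out an MSS barrier analysis (Section~\ref{sec:MSS}, leading to Theorem~\ref{thm:MSS version} and the estimate~\eqref{eq:with transform}) and obtains a bound in terms of $\sum_i\ss_i(A)^{-2}$ rather than any tail sum $T_r$; the authors explicitly note that this MSS-derived bound is incomparable to Theorem~\ref{thm:full column rank}. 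So the step you are glossing over is exactly the one the paper could not extract from the barrier method, and had to obtain instead by the volumetric preselection of Lemma~\ref{lem:max volume}.

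In short: your reduction is correct and the target root inequality may well be true, but you have not proved it, and the ``tail-restricted'' barrier idea as stated does not survive the first differentiation. Either supply a concrete recursion that tracks a tail quantity through $m-k$ differentiations, or adopt the paper's two-stage approach.
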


\begin{example}\label{example:harmonic}
To illustrate the relation between Theorem~\ref{th-spielman-srivastava}, Theorem~\ref{thm:MSS schatten} and Theorem~\ref{thm:main rank theorem}, consider a linear operator $A:\R^m\to \R^n$ with $\ss_j(A)\asymp {1/\sqrt{j}}$ for every $j\in \m$. Thus $\rank(A)=m$, $\srank(A)\asymp \log m$ and $\srank_4(m)\asymp (\log m)^2$. Since $\sqrt{m}/\|A\|_{\S_2}\asymp \sqrt{m/\log m}$, Theorem~\ref{th-spielman-srivastava} yields $\s\subset\m$ with $|\s|\asymp \log m$ and $\|(AJ_\s)^{-1}\|_{\S_\infty}\lesssim \sqrt{m/\log m}$, Theorem~\ref{thm:MSS schatten} yields such a subset with $|\s|\asymp (\log m)^2$, and Theorem~\ref{thm:main rank theorem} yields such a subset with $|\s|\gtrsim \sqrt{m}$. In fact, for every $\e\in (0,1)$, Theorem~\ref{thm:main rank theorem} yields  $\s\subset\m$ with $|\s|\gtrsim m^{1-\e}$ such that $\|(AJ_\s)^{-1}\|_{\S_\infty}\lesssim \frac{1}{\sqrt{\e}}\sqrt{m/\log m}$.
\end{example}

Theorem~\ref{thm:main rank theorem}  has the feature that it asserts the existence of a coordinate subspace of dimension arbitrarily close to the rank of the given operator on which it is invertible, with quantitative control on the operator norm of the inverse. The rank is not a stable quantity, but it is simple to deduce stable consequences of Theorem~\ref{thm:main rank theorem}   that are stronger than Theorem~\ref{thm:MSS schatten}. Indeed, continuing with the notation of Theorem~\ref{thm:main rank theorem}, for every $p\in (2,\infty)$ we can apply H\"older's inequality to deduce that
\begin{multline*}
\|A\|_{\S_2}^2=\sum_{i=1}^{r-1}\ss_i(A)^2+\sum_{i=r}^m\ss_i(A)^2\\\le (r-1)^{1-\frac{2}{p}}\bigg(\sum_{i=1}^{r-1}\ss_i(A)^p\bigg)^{\frac{2}{p}}+\sum_{i=r}^m\ss_i(A)^2\le (r-1)^{1-\frac{2}{p}}\|A\|_{\S_p}^2+\sum_{i=r}^m\ss_i(A)^2.
\end{multline*}
Hence,
\begin{equation}\label{eq:ky fan lower}
\sum_{i=r}^m\ss_i(A)^2\ge \|A\|_{\S_2}^2-(r-1)^{1-\frac{2}{p}}\|A\|_{\S_p}^2\stackrel{\eqref{eq:def p srank}}{=}\|A\|_{\S_2}^2\left(1-\bigg(\frac{r-1}{\srank_p(A)}\bigg)^{1-\frac{2}{p}}\right).
\end{equation}
A substitution of~\eqref{eq:ky fan lower} into~\eqref{eq:min over r} yields the following estimate.
\begin{equation}\label{eq:set up to ptimize over r}
\ss_{\min}(AJ_\s)^2\gtrsim \max_{r\in \{k+1,\ldots, \srank_p(A)\}}\left(1-\frac{k}{r}\right)\left(1-\bigg(\frac{r-1}{\srank_p(A)}\bigg)^{1-\frac{2}{p}}\right)\cdot \frac{\|A\|_{\S_2}^2}{m}.
\end{equation}

The estimate~\eqref{eq:set up to ptimize over r} is nontrivial only when $k<\srank_p(A)$, so write $k=(1-\e)\srank_p(A)$ for some $\e\in (0,1)$. One checks that the following choice of $r\in \{k+1,\ldots, \srank_p(A)\}$ attains the maximum in the right hand side of~\eqref{eq:set up to ptimize over r}, up to universal constant factors. If $\e$ is bounded away from $1$, say $\e\in (0,1/2]$, choose $r\asymp (1-\e/2)\srank_p(A)$. If $1/2< \e\le 1-e^{-p/(p-2)}$ then choose $r\asymp \log(1/(1-\e))\cdot \srank_p(A)$. If $1-e^{-p/(p-2)}<\e<1$ then choose $r\asymp e^{-p/(p-2)}\srank_p(A) $. Thus,

\begin{align*}
\begin{split}
0<\e\le \frac12&\implies \|(AJ_\s)^{-1}\|_{\S_\infty}\lesssim \sqrt{\frac{p}{p-2}}\cdot \frac{\sqrt{m}}{\e\|A\|_{\S_2}},\\
\frac12<\e\le 1-e^{-\frac{p}{p-2}}&\implies \|(AJ_\s)^{-1}\|_{\S_\infty}\lesssim \sqrt{\frac{p}{p-2}}\cdot \frac{\sqrt{m}}{\log\left(1/(1-\e)\right)\|A\|_{\S_2}},\\
1-e^{-\frac{p}{p-2}}<\e<1&\implies \|(AJ_\s)^{-1}\|_{\S_\infty}\lesssim \frac{\sqrt{m}}{\|A\|_{\S_2}}.
\end{split}
\end{align*}
A more concise way to write these estimates is as follows.
\begin{equation*}
\|(AJ_\s)^{-1}\|_{\S_\infty}\lesssim \bigg(1+\frac{p}{(p-2)\big|\log\left(1-\e^2\right)\big|}\bigg)^{\frac12} \frac{\sqrt{m}}{\|A\|_{\S_2}}.
\end{equation*}
For ease of future reference, we record the above corollary of Theorem~\ref{thm:main rank theorem}  as Theorem~\ref{thm:our schatten} below.

\begin{theorem}[Restricted invertibility in terms of Schatten--von Neumann norms]\label{thm:our schatten} Suppose that  $k,m,n\in \N$, $\e\in (0,1)$ and $p\in (2,\infty)$. Let $A:\R^m\to \R^n$ be a linear operator that satisfies $k\le (1-\e)\srank_p(A)$. Then there exists a subset $\s\subset \m$ with $|\s|=k$ such that
\begin{equation*}
\|(AJ_\s)^{-1}\|_{\S_\infty}\lesssim \bigg(1+\frac{p}{(p-2)\big|\log\left(1-\e^2\right)\big|}\bigg)^{\frac12} \frac{\sqrt{m}}{\|A\|_{\S_2}}.
\end{equation*}
Equivalently, if $k<\entrank(A)$ then there exists $\sigma \subset \m$ with $|\s|=k$ such that
$$
\|(AJ_\s)^{-1}\|_{\S_\infty}\lesssim \inf_{p>2} \psi_p\!\left(1-\frac{k}{\srank_p(A)}\right)\frac{\sqrt{m}}{\|A\|_{\S_2}},
$$
where $\psi_p:\R\to [0,\infty]$ is defined by $\psi_p(\e)=\infty$ if $\e\le 0$, $\psi_p(x)=(\sqrt{p/(p-2)})/\e$ if $0<\e<1/2$, $\psi_p(\e)=(\sqrt{p/(p-2)})/\log(1/(1-\e))$ if $1/2<\e\le 1-e^{-p/(p-2)}$ and $\psi_p(\e)=1$ if $\e> 1-e^{-p/(p-2)}$.
\end{theorem}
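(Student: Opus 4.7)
The plan is to deduce Theorem~\ref{thm:our schatten} directly from Theorem~\ref{thm:main rank theorem} by H\"older's inequality followed by a careful optimization in the auxiliary parameter $r$; this is essentially the calculation already outlined in the paragraphs preceding the statement, and what remains is to assemble it into a proof.

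First I would apply Theorem~\ref{thm:main rank theorem} to obtain, for every admissible integer $r$, a subset $\s\subset\m$ with $|\s|=k$ satisfying $\|(AJ_\s)^{-1}\|_{\S_\infty}^2\lesssim mr/((r-k)\sum_{i=r}^m\ss_i(A)^2)$. To convert the tail sum into Schatten-norm data, I would apply H\"older's inequality with exponents $p/2$ and $p/(p-2)$ to the head sum, obtaining $\sum_{i=1}^{r-1}\ss_i(A)^2\le(r-1)^{1-2/p}\|A\|_{\S_p}^2$, and then subtract from $\|A\|_{\S_2}^2$ to get the Ky Fan type lower bound~\eqref{eq:ky fan lower}. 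Substituting into the previous estimate yields the parametric inequality~\eqref{eq:set up to ptimize over r}, after which it suffices to maximize $(1-k/r)\bigl(1-((r-1)/\srank_p(A))^{1-2/p}\bigr)$ over $r\in\{k+1,\ldots,\srank_p(A)\}$, where $k=(1-\e)\srank_p(A)$.

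The heart of the argument, and the step I expect to be the most fiddly, is splitting this one-dimensional optimization into three regimes of $\e$, exactly as indicated in the excerpt. For $\e\in(0,1/2]$ I would take $r\asymp(1-\e/2)\srank_p(A)$ and exploit the elementary inequality $1-(1-t)^{1-2/p}\ge(1-2/p)t$ valid for $t\in[0,1]$ (a one-line consequence of the mean value theorem), producing first factor $\asymp\e$ and second factor $\asymp(1-2/p)\e$. For $\e\in(1/2,1-e^{-p/(p-2)}]$ the right balance is $r\asymp\log(1/(1-\e))\cdot\srank_p(A)$, yielding a product of order $(1-2/p)\bigl(\log(1/(1-\e))\bigr)^2$. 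For $\e>1-e^{-p/(p-2)}$ one chooses $r$ to be a suitable constant multiple of $e^{-p/(p-2)}\srank_p(A)$, leaving both factors bounded below by absolute constants.

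Finally, the three estimates are unified by the closed-form expression $(1+p/((p-2)|\log(1-\e^2)|))^{1/2}$: one verifies $|\log(1-\e^2)|\asymp\e^2$ for $\e\in(0,1/2]$, $|\log(1-\e^2)|\asymp\log(1/(1-\e))$ in the intermediate range, and $(p-2)|\log(1-\e^2)|\gtrsim p$ when $\e>1-e^{-p/(p-2)}$. The equivalent $\entrank$ formulation then follows by observing, via~\eqref{eq:p monotonicity} and the explicit formula for $\entrank(A)$, that $\srank_p(A)\nearrow\entrank(A)$ as $p\to 2^+$, so that the hypothesis $k<\entrank(A)$ is precisely what permits one to choose some $p>2$ with $k<\srank_p(A)$ and then invoke the first bound.
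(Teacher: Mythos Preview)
Your proposal is correct and follows essentially the same route as the paper: the argument there is precisely the computation displayed in the paragraphs between~\eqref{eq:ky fan lower} and the statement of Theorem~\ref{thm:our schatten}, namely apply Theorem~\ref{thm:main rank theorem}, bound the tail via H\"older to reach~\eqref{eq:set up to ptimize over r}, then optimize over $r$ in the three regimes of $\e$ and repackage the result as a single closed-form bound. Your treatment of the $\entrank$ reformulation via the monotonicity~\eqref{eq:p monotonicity} is also the intended one.
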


The case $p=4$ of Theorem~\ref{thm:our schatten} implies (up to constant factors) the conclusion of Theorem~\ref{thm:MSS schatten}, though  now treating any $\e\in (0,1)$, i.e., $k$ arbitrarily close to $\srank_4(A)$, while Theorem~\ref{thm:MSS schatten} applies only when $\e>3/4$. Theorem~\ref{thm:our schatten} can detect the well-invertibility of $A$ on coordinate subspaces that are much larger than those detected by Theorem~\ref{thm:MSS schatten}. For example suppose that the singular values of $A$ are $\ss_1(A)\asymp\sqrt[3]{m}$ and $\ss_2(A)\asymp\ss_3(A)\asymp\ldots\asymp\ss_m(A)\asymp1$. Then Theorem~\ref{thm:MSS schatten} yields a subset $\sigma\subset \m$ of size of order $m^{2/3}$ for which the operator norm of the inverse of $AJ_\s$ is $O(1)$, while (the case $p=3$ of) Theorem~\ref{thm:our schatten} yields such a subset whose size is proportional to $m$.

\medskip

We shall prove Theorem~\ref{thm:main rank theorem} through an application of Theorem~\ref{thm:full column rank} below, which is a restricted invertibility statement of independent interest, in combination with a volumetric argument that leads to Lemma~\ref{lem:max volume} below. Throughout what follows, given $n\in \N$ and a linear subspace $F\subset \R^n$, we shall denote the orthogonal projection from $\R^n$ onto $F$ by $\proj_F:\R^n\to F$.

\begin{theorem}\label{thm:full column rank}
Fix $k,m,n\in \N$ and a linear operator $A:\R^m \to \R^n$ satisfying $\rank(A)>k$. Let $\omega\subset \m$ be any subset with $|\omega|=\rank(A)$ such that the vectors $\{Ae_i\}_{i\in \omega}\subset \R^n$ are linearly independent. For every $j\in \omega$ let $F_j\subset \R^n$ be the orthogonal complement of the span of $\{Ae_i\}_{i\in \omega\setminus\{j\}}\subset \R^n$, i.e.,
\begin{equation}\label{eq:def Fj}
F_j\eqdef \left(\spn\left\{A e_i\right\}_{i\in \omega\setminus \{j\}}\right)^\perp.
\end{equation}
Then there exists a  subset $\s\subset \omega$ with $|\s|= k$ such that
\begin{equation}\label{eq:max projection}
\|(AJ_\s)^{-1}\|_{\S_\infty}\lesssim  \frac{\sqrt{\rank(A)}}{\sqrt{\rank(A)-k}}\cdot \max_{j\in \omega} \frac{1}{\|\proj_{F_j}Ae_j\|_2}.
\end{equation}
\end{theorem}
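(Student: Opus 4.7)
The plan proceeds in two steps: a biorthogonal-duality reduction to controlling the spectral norm of a sub-sum of rank-one operators, followed by a Batson--Spielman--Srivastava style barrier argument to exhibit the subset.

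\textbf{Step 1 (Biorthogonal duality).} By restricting to the subset $\omega$, we may assume without loss of generality that $\omega=\{1,\ldots,r\}$, $m=r$, and that the columns $Ae_1,\ldots,Ae_r$ are linearly independent. For each $j\in \omega$, the biorthogonal vector
$$
f_j\eqdef \frac{\proj_{F_j}(Ae_j)}{\|\proj_{F_j}(Ae_j)\|_2^2}\in \R^n
$$
satisfies $\langle f_j,Ae_i\rangle=\delta_{ij}$ for $i,j\in \omega$ and $\|f_j\|_2=1/\|\proj_{F_j}(Ae_j)\|_2$; write $L\eqdef \max_{j\in\omega}\|f_j\|_2$, which equals the maximum appearing in~\eqref{eq:max projection}. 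For $\sigma\subset\omega$ let $F_\sigma$ be the matrix with columns $\{f_j\}_{j\in \sigma}$. For any $x\in \R^\sigma$, biorthogonality yields $F_\sigma^*(AJ_\sigma x)=x$; since the minimum-$\ell_2$-norm solution of $F_\sigma^* y=x$ is $y^\ast=F_\sigma(F_\sigma^*F_\sigma)^{-1}x$ with $\|y^\ast\|_2^2=x^*(F_\sigma^*F_\sigma)^{-1}x$, one obtains
$$
\|AJ_\sigma x\|_2^2 \;\ge\; \|y^\ast\|_2^2 \;=\; x^*(F_\sigma^*F_\sigma)^{-1}x \;\ge\; \frac{\|x\|_2^2}{\big\|\sum_{j\in\sigma}f_jf_j^*\big\|_{\S_\infty}}.
$$
Hence $\|(AJ_\sigma)^{-1}\|_{\S_\infty}^2\le \big\|\sum_{j\in\sigma}f_jf_j^*\big\|_{\S_\infty}$, and it suffices to find $\sigma\subset\omega$ with $|\sigma|=k$ and $\big\|\sum_{j\in\sigma}f_jf_j^*\big\|_{\S_\infty}\lesssim L^2\, r/(r-k)$.

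\textbf{Step 2 (Subset selection via an upper barrier).} The reformulated task is a principal-submatrix problem: the $r\times r$ PSD Gram matrix $H\eqdef F^*F$ has diagonal entries $H_{jj}=\|f_j\|_2^2\le L^2$, and we seek a $k\times k$ principal submatrix $H_\sigma$ with $\|H_\sigma\|_{\S_\infty}\lesssim L^2 r/(r-k)$. The plan is to apply a Batson--Spielman--Srivastava style iterative deletion, controlled by the upper-barrier potential
$$
\Phi_u(\sigma)\eqdef \trace\bigl((uI-\textstyle\sum_{j\in\sigma}f_jf_j^*)^{-1}\bigr),\qquad u>\big\|\textstyle\sum_{j\in\sigma}f_jf_j^*\big\|_{\S_\infty}.
$$
Start at $\sigma_0=\omega$ with $u_0$ taken suitably large; at each of the $r-k$ steps, delete an index $j^\ast\in\sigma_t$ and decrease the barrier $u_{t+1}=u_t-\delta_t$ by a carefully chosen $\delta_t>0$ while maintaining $\Phi_{u_{t+1}}(\sigma_{t+1})\le \Phi_{u_t}(\sigma_t)$. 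The change of $\Phi$ under a rank-one removal and barrier shift has a closed form via the Sherman--Morrison formula; an averaging argument over $j^\ast\in\sigma_t$ then shows that a suitable pair $(j^\ast,\delta_t)$ always exists. After $r-k$ iterations $|\sigma|=k$ and the terminal barrier obeys $u_{r-k}\lesssim L^2 r/(r-k)$, bounding $\|\sum_{j\in\sigma}f_jf_j^*\|_{\S_\infty}\le u_{r-k}$ as needed and, via Step~1, yielding the claimed bound on $\|(AJ_\sigma)^{-1}\|_{\S_\infty}$.

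\textbf{Main obstacle.} The crucial and most delicate step is calibrating the barrier schedule in Step~2: the deletion rule and the $\delta_t$'s must be chosen so that $\sum_{t=0}^{r-k-1}\delta_t\lesssim L^2 r/(r-k)$ while the potential is sustained throughout. The uniform bound $\|f_j\|_2\le L$ enters precisely here, controlling the Sherman--Morrison denominators and ensuring that the total barrier drop remains of the claimed order.
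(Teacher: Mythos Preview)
Your Step~1 reduction is correct as an inequality, but it discards too much information, and as a consequence the target you set yourself in Step~2 is in general \emph{unachievable}, even though Theorem~\ref{thm:full column rank} is true. Concretely, the claim that for every positive semidefinite $r\times r$ matrix $H$ with diagonal entries at most $L^2$ one can find a $k\times k$ principal submatrix of operator norm $\lesssim L^2 r/(r-k)$ is false. Take $r$ large, $k=\lfloor r/2\rfloor$, and let $f_j=e_1+\e e_j\in\R^r$ for $j\ge 2$ and $f_1=(1+\e)e_1$, where $\e>0$ is small. These vectors are linearly independent, so they are the biorthogonal system of some linearly independent family $\{Ae_j\}_{j=1}^r$, and $L=\max_j\|f_j\|_2=1+\e$. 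But for \emph{every} $\sigma\subset\{1,\ldots,r\}$ with $|\sigma|=k$ one has
\[
\Big\|\sum_{j\in\sigma}f_jf_j^{*}\Big\|_{\S_\infty}\ge \Big\langle e_1,\sum_{j\in\sigma}f_jf_j^{*}e_1\Big\rangle=\sum_{j\in\sigma}\langle e_1,f_j\rangle^2\ge k,
\]
whereas your Step~2 demands this be $\lesssim L^2 r/(r-k)\asymp 1$. No barrier schedule can rescue this: removing $r-k$ rank-one terms $f_jf_j^{*}$ from a sum whose top eigenvector is essentially shared by all the $f_j$ can decrease the top eigenvalue by at most $r-k$ times $\max_j\|f_j\|_2^2$, so the terminal operator norm is at least of order $k$. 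Meanwhile Theorem~\ref{thm:full column rank} holds easily for this $A$ (take $\sigma\subset\{2,\ldots,r\}$, where the columns $Ae_j=\e^{-1}e_j$ are orthogonal), showing that the loss occurs already in Step~1.

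The paper's proof takes a completely different route and does not pass through the operator norm of $\sum_{j\in\sigma}f_jf_j^{*}$. It controls instead the $\ell_2\to\ell_1$ norm of $(AJ_\vartheta)^{-1}$ on carefully chosen subsets via an iterated Sauer--Shelah argument (Lemma~\ref{lem:induction}), then upgrades to the $\S_\infty$ norm using the Little Grothendieck Inequality and Pietsch domination (Lemma~\ref{lem:grothendieck pietsch}), and finally patches together a multi-scale family of such subsets to cover a set of size $k$. The dual vectors $\mathsf{v}_j=f_j$ do appear, but only through the \emph{average} $\sum_j\|f_j\|_2^2$ inside a Markov/parallelogram argument, never through the spectral norm of their Gram matrix.
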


The link between Theorem~\ref{thm:full column rank} and Theorem~\ref{thm:main rank theorem} is furnished through the following lemma.

\begin{lemma}\label{lem:max volume}
Fix $r,m,n\in \N$. Let $A:\R^m\to \R^n$ be a linear operator with $\rank(A)\ge r$. For every $\tau \subset \m$ let $E_\tau\subset \R^n$ be the orthogonal complement of the span of $\{Ae_j\}_{j\in \tau}\subset \R^n$, i.e.,\footnote{Comparing~\eqref{eq:def Fj} and~\eqref{eq:def E sigma} we see that $F_j=E_{\omega\setminus\{j\}}$ for every $j\in \omega$.}
\begin{equation}\label{eq:def E sigma}
E_\tau\eqdef \left(\spn\left\{A e_j\right\}_{j\in \tau}\right)^\perp.
\end{equation}
Then there exists a subset $\tau\subset \m$ with $|\tau|=r$ such that
\begin{equation}\label{eq:desired projections big unweighted}
\forall\, j\in \tau,\qquad \big\|\proj_{E_{\tau\setminus\{j\}}}Ae_j\big\|_2 \ge \frac{1}{\sqrt{m}}\bigg(\sum_{i=r}^m \ss_i(A)^2\bigg)^{\frac12}.
\end{equation}
\end{lemma}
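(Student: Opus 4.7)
My plan is to pick $\tau$ as a volume maximizer and then to combine a swap/exchange argument with a rank-based Hilbert--Schmidt bound; no restricted invertibility machinery is needed for this lemma.

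First I would let $\tau\subset\m$ with $|\tau|=r$ be a subset that maximizes the $r$-dimensional parallelepiped volume $\vol_r(\{Ae_l\}_{l\in\tau})$ over all $r$-subsets of $\m$. Since $\rank(A)\ge r$ this maximum is strictly positive, so the vectors $\{Ae_l\}_{l\in\tau}$ are linearly independent. Fixing $j\in\tau$, the base-times-height formula gives
\[
\vol_r(\{Ae_l\}_{l\in\tau})=\bigl\|\proj_{E_{\tau\setminus\{j\}}}Ae_j\bigr\|_2\cdot\vol_{r-1}(\{Ae_l\}_{l\in\tau\setminus\{j\}}),
\]
and the same identity applied to the $r$-subset $(\tau\setminus\{j\})\cup\{i\}$ for any $i\in\m\setminus(\tau\setminus\{j\})$ shares the identical $(r-1)$-dimensional base. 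Dividing by $\vol_{r-1}(\{Ae_l\}_{l\in\tau\setminus\{j\}})>0$, the extremality of $\tau$ forces
\[
\bigl\|\proj_{E_{\tau\setminus\{j\}}}Ae_j\bigr\|_2\;\ge\; \bigl\|\proj_{E_{\tau\setminus\{j\}}}Ae_i\bigr\|_2\qquad\text{for every }i\in\m,
\]
where for $i\in\tau\setminus\{j\}$ the right-hand side vanishes trivially.

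Next I would sum the squares of these inequalities over $i\in\m$ to obtain $m\bigl\|\proj_{E_{\tau\setminus\{j\}}}Ae_j\bigr\|_2^2\ge \bigl\|\proj_{E_{\tau\setminus\{j\}}}A\bigr\|_{\S_2}^2$. Writing $P=\proj_{E_{\tau\setminus\{j\}}}$, the complementary projection $I-P$ has rank $r-1$ (the dimension of $\spn\{Ae_l\}_{l\in\tau\setminus\{j\}}$) and is a contraction, so $(I-P)A$ has rank at most $r-1$ and $\ss_l((I-P)A)\le\ss_l(A)$ for every $l$; consequently $\|(I-P)A\|_{\S_2}^2\le\sum_{l=1}^{r-1}\ss_l(A)^2$. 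The Pythagorean identity $\|A\|_{\S_2}^2=\|PA\|_{\S_2}^2+\|(I-P)A\|_{\S_2}^2$ will then yield $\|PA\|_{\S_2}^2\ge\sum_{l=r}^m\ss_l(A)^2$, and~\eqref{eq:desired projections big unweighted} follows.

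The only delicate step is the exchange argument: the two parallelepipeds obtained by swapping $j\in\tau$ with any $i\in\m$ must share the identical $(r-1)$-dimensional base $\{Ae_l\}_{l\in\tau\setminus\{j\}}$, and this is what makes their $r$-volumes directly comparable via single scalar projections onto $E_{\tau\setminus\{j\}}$. This is precisely why a volume maximizer is the correct selection rule (as opposed to a greedy or spectral one); everything after that reduces to routine trace arithmetic using the contractivity of $I-P$.
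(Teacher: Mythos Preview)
Your proposal is correct and follows essentially the same approach as the paper: the paper also chooses $\tau$ as a volume maximizer (using the convex hull $K_\tau=\conv\{\pm Ae_l:l\in\tau\}$ rather than the parallelepiped, a cosmetic difference), derives the same exchange inequality~\eqref{eq:proj max 2} from the identical base-times-height comparison, sums to get $m\|\proj_{E_{\tau\setminus\{j\}}}Ae_j\|_2^2\ge\|\proj_{E_{\tau\setminus\{j\}}}A\|_{\S_2}^2$, and then bounds the latter via Lemma~\ref{lem:ky} (Fan's maximum principle). Your final step---bounding $\|(I-P)A\|_{\S_2}^2$ via the rank constraint together with $\ss_l((I-P)A)\le\ss_l(A)$ and the Pythagorean identity---is exactly the content of the paper's Lemma~\ref{lem:ky}, just argued directly rather than by citing Fan.
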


The deduction of  Theorem~\ref{thm:main rank theorem} from Theorem~\ref{thm:full column rank} and Lemma~\ref{lem:max volume} is simple. Indeed, in the setting of Theorem~\ref{thm:main rank theorem}, take $r\in \{k+1,\ldots,\rank(A)\}$ and apply Lemma~\ref{lem:max volume} to obtain a subset $\tau\subset\m$ with $|\tau|=r$ that satisfies~\eqref{eq:desired projections big unweighted}. This implies in particular that $\{Ae_j\}_{j\in \tau}$ are linearly independent, hence the operator $AJ_\tau:\R^\tau\to \R^n$ has rank $r$. By Theorem~\ref{thm:full column rank} applied with $A$ replaced by $AJ_\tau$, $m=r=\rank(A)$ and $\omega=\tau$, we obtain a further subset $\s\subset \tau$ with $|\s|=k$ such that
$$
\|(AJ_\s)^{-1}\|_{\S_\infty}\stackrel{\eqref{eq:max projection}\wedge \eqref{eq:desired projections big unweighted}}{\lesssim} \sqrt{\frac{mr}{(r-k)\sum_{i=r}^m \ss_i(A)^2}}.
$$
This is precisely the assertion of Theorem~\ref{thm:main rank theorem}.

In Section~\ref{sec:MSS} we shall prove the following variant of Theorem~\ref{thm:full column rank}.

\begin{theorem}\label{thm:MSS version}
Fix $k,m,n\in \N$ and a linear operator $A:\R^m \to \R^n$ satisfying $\rank(A)>k$. Then there exists a subset $\s\subset \m$ with $|\s|=k$ such that
\begin{equation}\label{eq:with average projection}
\|(AJ_\s)^{-1}\|_{\S_\infty}\le \frac{\sqrt{m}}{\sqrt{\rank(A)}-\sqrt{k}}\bigg(\frac{1}{\rank(A)}\sum_{i=1}^{\rank(A)}\frac{1}{\ss_i(A)^2}\bigg)^{\frac12}.
\end{equation}
\end{theorem}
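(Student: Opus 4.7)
The plan is to prove Theorem~\ref{thm:MSS version} via the method of interlacing families of Marcus--Spielman--Srivastava. Setting $r=\rank(A)$, for each $\s\in\binom{\m}{k}$ introduce the characteristic polynomial
\begin{equation*}
p_\s(x)\eqdef\det\!\Big(xI_n-\sum_{j\in\s}(Ae_j)(Ae_j)^T\Big)=x^{n-k}\det\!\big(xI_k-(AJ_\s)^*(AJ_\s)\big),
\end{equation*}
whose positive roots coincide with $\ss_1(AJ_\s)^2,\dots,\ss_k(AJ_\s)^2$. Writing $\lambda^\star\eqdef(\sqrt{r}-\sqrt{k})^2\,r/\big(m\sum_{i=1}^r\ss_i(A)^{-2}\big)$, the theorem reduces to exhibiting a single $\s$ whose $p_\s$ has smallest positive root at least $\lambda^\star$, since then $\ss_{\min}(AJ_\s)\ge\sqrt{\lambda^\star}$, which is equivalent to the desired upper bound on $\|(AJ_\s)^{-1}\|_{\S_\infty}$.

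The first half of the plan is by now a routine application of the MSS template. I would verify that $\{p_\s\}_{\s\in\binom{\m}{k}}$ forms an interlacing family with respect to the tree that samples the $k$ elements of $\s$ one at a time, uniformly without replacement from $\m$; the requisite real-rootedness at each internal node follows from the standard MSS fact that expected characteristic polynomials of sums of independent rank-one PSD matrices are real-rooted (and remain so under uniform sampling without replacement). The interlacing-family conclusion produces an $\s$ whose $p_\s$ has every positive root at least the smallest positive root of the full expectation $q(x)\eqdef\E_\s p_\s(x)$. One then computes $q$ explicitly by expanding each $\det(xI_k-(AJ_\s)^*(AJ_\s))$ in principal minors, averaging via $\Pr[\s\supseteq T]=\binom{m-|T|}{k-|T|}\!/\!\binom{m}{k}$, and collecting terms through the Cauchy--Binet identity $\sum_{|T|=i}\det((AJ_T)^*(AJ_T))=e_i(\ss_1(A)^2,\dots,\ss_r(A)^2)$ (where $e_i$ denotes the $i$-th elementary symmetric polynomial); this yields $q(x)=x^{n-k}\tilde q(x)$ with
\begin{equation*}
\tilde q(x)=\sum_{i=0}^k(-1)^i\,\frac{\binom{m-i}{k-i}}{\binom{m}{k}}\,e_i\!\big(\ss_1(A)^2,\dots,\ss_r(A)^2\big)\,x^{k-i}.
\end{equation*}

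The main obstacle is the remaining quantitative step: showing that the smallest positive root $\rho_1$ of $\tilde q$ is at least $\lambda^\star$. The plan is a barrier-function argument of MSS type, tracking a lower-barrier potential for $\tilde q$ as the argument $x$ moves from $0$ towards $\lambda^\star$. The initial value of this barrier is $-\tilde q'(0)/\tilde q(0)=\sum_j\rho_j^{-1}=(m-k+1)\,e_{k-1}(\ss^2)/e_k(\ss^2)$, where $e_j(\ss^2)$ abbreviates $e_j(\ss_1(A)^2,\dots,\ss_r(A)^2)$; this is controlled by Newton's inequalities, since the log-concavity of $j\mapsto e_j(\ss^2)/\binom{r}{j}$ yields $\sum_j\rho_j^{-1}\le\tfrac{k(m-k+1)}{r(r-k+1)}\sum_{i=1}^r\ss_i(A)^{-2}$, using in particular the identity $e_{r-1}(\ss^2)/e_r(\ss^2)=\sum_{i=1}^r\ss_i(A)^{-2}$ as the key input. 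Analogous higher-order Newton-type bounds on the subsequent coefficients of $\tilde q$ then allow the barrier to be tracked as $x$ increases along the real line, and optimising the resulting trade-off identifies the critical value $\lambda^\star$, with the factor $(\sqrt{r}-\sqrt{k})^2\,r/m$ emerging at this optimisation step. Combining this root estimate with the earlier interlacing argument yields $\s$ with $\ss_{\min}(AJ_\s)\ge\sqrt{\lambda^\star}$, proving the theorem; the interlacing family and the expected-polynomial computation are formal, but the barrier-function root bound is the genuinely technical part of the argument.
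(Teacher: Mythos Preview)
Your interlacing-family setup and the computation of the averaged polynomial $\tilde q$ are correct, but the proposal has a genuine gap in the ``barrier-function root bound,'' and this gap is not merely technical---it reflects a misunderstanding of how the barrier method operates here. You describe the plan as ``tracking a lower-barrier potential for $\tilde q$ as the argument $x$ moves from $0$ towards $\lambda^\star$.'' But $\tilde q$ is a single fixed polynomial; evaluating $\Phi(x)=-\tilde q'(x)/\tilde q(x)=\sum_j(\rho_j-x)^{-1}$ at various $x$ carries no information beyond the location of the roots themselves, and $\Phi$ simply blows up at $x=\rho_1$. The MSS barrier method does not track $x$ along a fixed polynomial; it tracks how the soft spectral edge $\mathrm{\bf smin}_\phi$ moves as the polynomial is \emph{built up} one differential operator at a time, using the monotonicity lemma $\mathrm{\bf smin}_\phi\big((I-\D)f\big)\ge\mathrm{\bf smin}_\phi(f)+\tfrac{1}{1+\phi}$. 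Your $\tilde q$ has no such factorisation into operators of the form $(I-\ss\,\D)$, so there is nothing to iterate. The Newton-inequality bound you obtain, $\sum_j\rho_j^{-1}\le\tfrac{k(m-k+1)}{r(r-k+1)}\sum_i\ss_i(A)^{-2}$, gives only $\rho_1\ge\big(\sum_j\rho_j^{-1}\big)^{-1}$, which is far weaker than $\lambda^\star$ (off by a factor of order $r$ in typical regimes), and the vague appeal to ``analogous higher-order Newton-type bounds'' does not constitute an argument.

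The paper avoids this obstruction by sampling $\j_1,\ldots,\j_k$ \emph{i.i.d.}\ uniformly from $\m$ (with replacement) and working with $\w_t=\sqrt{m}\,Ae_{\j_t}$. The point of this choice is that the expected characteristic polynomial then factors exactly: writing $r=\rank(A)$, one has
\[
\E\Big[\det\Big(xI_n-\textstyle\sum_{t=1}^k\w_t\otimes\w_t\Big)\Big]=x^{n-k}\prod_{i=1}^{r}\big(I-\ss_i(A)^2\D\big)\,x^k .
\]
This factorisation is what makes the barrier argument go through: starting from $\mathrm{\bf smin}_\phi(x^k)=-k/\phi$ and applying the (rescaled) monotonicity lemma $r$ times yields $\mathrm{\bf smin}_\phi(g)\ge -k/\phi+\sum_{i=1}^r\big(\ss_i(A)^{-2}+\phi\big)^{-1}$, and optimising over $\phi$ (after one application of convexity) produces exactly $m\lambda^\star$. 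The with-replacement sampling costs nothing, because once one knows the $k$th largest root of the random sum exceeds $m\lambda^\star>0$, the realised indices $\j_1,\ldots,\j_k$ are forced to be distinct. Your without-replacement polynomial differs from this $g$ by replacing the factor $k!/(k-i)!$ in the $i$th coefficient with $k!/(k-i)!\cdot(m-i)!/m!$; this destroys the operator factorisation, and I do not see how to recover the sharp bound $\lambda^\star$ for $\tilde q$ directly.
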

To explain how Theorem~\ref{thm:MSS version} relates to Theorem~\ref{thm:main rank theorem}, note that in the setting of Theorem~\ref{thm:main rank theorem} we have
\begin{equation}\label{eq:trace projection identity}
\sum_{j\in \omega} \frac{1}{\|\proj_{F_j}Ae_j\|_2^2}=\sum_{i=1}^{\rank(A)}\frac{1}{\ss_i(AJ_\omega)^2}.
\end{equation}
The simple linear-algebraic justification of~\eqref{eq:trace projection identity} appears in Section~\ref{sec:dual basis} below. For simplicity suppose that $\omega=\m$, so $\rank(A)=m$, and write $k=(1-\e)m$ for some $\e\in (0,1)$. Then Theorem~\ref{thm:main rank theorem} yields a subset $\s\subset\m$ with $|\s|=k$ such that
\begin{equation}\label{incomparable 1}
\|(AJ_\s)^{-1}\|_{\S_\infty}\lesssim  \frac{1}{\sqrt{\e}}\cdot \max_{j\in \m} \frac{1}{\|\proj_{F_j}Ae_j\|_2},
\end{equation}
while, due to~\eqref{eq:trace projection identity}, Theorem~\ref{thm:MSS version} yields a subset $\s\subset\m$ with $|\s|=k$ such that
\begin{equation}\label{incomparable 2}
\|(AJ_\s)^{-1}\|_{\S_\infty}\le  \frac{1}{1-\sqrt{1-\e}}\bigg(\frac{1}{m}\sum_{i=1}^{m}\frac{1}{\|\proj_{F_j}Ae_j\|_2^2}\bigg)^{\frac12}\asymp \frac{1}{\e}\bigg(\frac{1}{m}\sum_{i=1}^{m}\frac{1}{\|\proj_{F_j}Ae_j\|_2^2}\bigg)^{\frac12}.
\end{equation}
The estimates~\eqref{incomparable 1} and~\eqref{incomparable 2} are incomparable since~\eqref{incomparable 1} yields a dependence on $\e$ that is better than that of~\eqref{incomparable 2} as $\e\to 0$, while the bound in~\eqref{incomparable 2} is in terms of the average of the quantities $\{1/\|\proj_{F_j}Ae_j\|_2^2\}_{j=1}^m$ rather than their maximum. It remains an interesting open question  whether  one could obtain a restricted invertibility theorem that combines the best terms in~\eqref{incomparable 1} and~\eqref{incomparable 2}.

\begin{remark}
Theorem~\ref{thm:full column rank} is best possible, up to constant factors. Indeed, fix $k,m\in \N$ with $k<m$ and let $B$ be the $m$ by $m$ matrix all of whose diagonal entries equal $m$ and all of whose off-diagonal entries equal $-1$. Then $B$ is positive definite (diagonal-dominant) and we choose $A=\sqrt{B}$. We are thus in the setting of Theorem~\ref{thm:full column rank} with $m=n=\rank(A)$ and $\omega=\m$. The quantity $1/\|\proj_{F_j}Ae_j\|_2^2$ is equal to the $j$'th diagonal entry of $(A^*A)^{-1}=B^{-1}$; see equation~\eqref{eq:equal to diagonal element} in Section~\ref{sec:dual basis} below for a simple justification of this fact. The matrix $B$ is an invertible circulant matrix, and as such $B^{-1}$ is also a circulant matrix whose diagonal entries equal $2/(m+1)$; see~\cite{Dav79,KS12} for more on the explicit evaluation of basic quantities related to circulant matrices, including their inverses and eigenvalues, which we use here. Therefore $1/\|\proj_{F_j}Ae_j\|_2=\sqrt{2/(m+1)}$ for every $j\in \m$, so that the  right hand side of~\eqref{eq:max projection} equals
$\sqrt{2m/((m+1)(m-k))}\asymp 1/\sqrt{m-k}$. At the same time, take any $\s\subset\m$ with $|\s|=k$. Then $(AJ_\s)^*(AJ_\s)=J_\s^*BJ_\s$ corresponds to a $k$ by $k$ matrix whose diagonal entries equal $m$ and whose off-diagonal entries equal $-1$. This is again a circulant matrix whose eigenvalues equal to $m+1$ with multiplicity $k-1$ and $m+1-k$ with multiplicity $1$. Thus $\ss_1(AJ_\s)=\ldots=\ss_{k-1}(AJ_\s)=\sqrt{m+1}$ and $\ss_k(AJ_\s)=\ss_{\min}(AJ_\s)=1/\|(AJ_\s)^{-1}\|_{\S_\infty}=\sqrt{m+1-k}$. This shows that $\|(AJ_\s)^{-1}\|_{\S_\infty}\asymp 1/\sqrt{m-k}$, so that~\eqref{eq:max projection}  is sharp up to constant factors.
\end{remark}

\subsection{Remarks on the proofs.} The original proof of Bourgain and Tzafriri of Theorem~\ref{th-bourgain-tzafriri}  consists of a beautiful combination of probabilistic, combinatorial and analytic arguments. It proceeds roughly along three steps. Firstly, using random selectors one finds a large collection of columns of $A$ that is ``well separated." In the second step one uses the Sauer--Shelah lemma~\cite{Sau72,She72} to find a further subset of the columns such that the inverse of the restriction of $A$ to this subset, when viewed  as an operator from $\ell_2$ to $\ell_1$, has small norm; the Sauer--Shelah lemma is discussed in Section~\ref{sec:sauer shelah} below, since it plays an important role here as well. The third step of the  Bourgain--Tzafriri proof uses tools from functional analysis, specifically the Little Grothendieck's Inequality~\cite{Gro53} and the Pietsch Domination Theorem~\cite{Pie67}, to control the desired Hilbertian operator norm; these analytic tools are used here as well, and are explained in detail in Section~\ref{sec:gro} and Section~\ref{sec:pie} below.

Vershynin's proof of Theorem~\ref{th-vershynin} uses the Bourgain--Tzafriri restricted invertibility theorem as a ``black box," alongside with (unpublished) work of Kashin and Tzafriri (see Theorem~2.5 in~\cite{Ver01}). A key contribution of Verhynin was the idea to work with the Hilbert--Schmidt norm so as to allow for an iterative argument. As we stated earlier, the proof of Spielman and Srivastava of Theorem~\ref{th-spielman-srivastava}  is entirely different from the previously used methods in this context, relying on the `sparsification method' of Batson--Spielman--Srivastava~\cite{BSS12}. This refreshing approach led to many important developments, and it was subsequently augmented by the powerful `method of interlacing polynomials' of Marcus--Spielman--Srivastava, which they used to prove Theorem~\ref{thm:MSS schatten}, showing that one could use higher Schatten--von Neumann norms to address the restricted invertibility problem.

Our starting point here was the realization that one could use ideas and techniques that predate the works of Vershynin, Spielman--Srivastava and Marcus--Spielman--Srivastava  to obtain asymptotically sharp results such as Theorem~\ref{th-spielman-srivastava},  and even to strengthen the statement in terms of higher Schatten--von Neumann norms that is contained in Theorem~\ref{thm:MSS schatten}. These later results were based on the discovery of powerful new techniques, leading to many additional applications (crowned by the solution of the Kadison--Singer problem~\cite{MSS15-kadison}) that are not covered here, but the present work shows how to apply classical methods to improve over the best known bounds on the restricted invertibility problem. Specifically, we rely on the beautiful work of Giannopoulos~\cite{Gia96}, which treats a seemingly unrelated geometric question (see also~\cite{Gia95}), though it is partially inspired by the work of Bourgain--Tzafriri~\cite{BT87} itself, as well as the works of Bourgain--Szarek~\cite{BS88} and Szarek--Talagrand~\cite{ST89} (see also~\cite{Sza91}). The key step is to use Giannopoulos' clever iterative application of the Sauer--Shelah lemma  (Bourgain--Tzafriri used the Sauer--Shelah lemma  only once in their original argument) in the proof of Theorem~\ref{thm:full column rank}. In fact, one could use a geometric statement of Giannopoulos~\cite{Gia96} as a ``black box" so as to obtain a shorter proof of Theorem~\ref{thm:full column rank}; this is carried out in Section~\ref{sec:geometric} below, but only after we present a self-contained argument in Section~\ref{sec:gia}.

Theorem~\ref{thm:MSS version} is of a different nature, since its proof uses the Marcus--Spielman--Srivastava method of interlacing polynomials. We do not see how to prove it using the classical analytic techniques that are utilized elsewhere in this article, and in fact we do not need it for the applications that are obtained here (as we explained earlier,
Theorem~\ref{thm:MSS version} is incomparable to Theorem~\ref{thm:full column rank}, being weaker in terms of the dependence on certain parameters and stronger in other respects). Nevertheless, Theorem~\ref{thm:MSS version} certainly belongs to the family of restricted invertibility results that we study here.

Among the interesting questions that arise naturally from the present work, we ask whether Theorem~\ref{thm:main rank theorem}, Theorem~\ref{thm:our schatten},   Theorem~\ref{thm:full column rank} and Theorem~\ref{thm:MSS version} can be made to be algorithmic. Our current proofs  do not yield a polynomial time algorithm that finds the desired coordinate subspace, due to various reasons, including (but not limited to) the use of the Sauer--Shelah lemma (in Theorem~\ref{thm:main rank theorem}, Theorem~\ref{thm:our schatten} and   Theorem~\ref{thm:full column rank}) and the use of the method of interlacing polynomials (in Theorem~\ref{thm:MSS version}).


\subsection{Roadmap} While this article is primarily devoted to new results, it also has an expository component due to the fact that we are using tools and ideas from diverse fields, with which some readers may not be familiar. Being very much inspired by Matou\v{s}ek's exceptionally clear style of mathematical exposition, we also made an effort for the ensuing arguments to be self-contained by  including quick explanations of classical results that are being used. It seems  impossible to fully achieve a Matou\v{s}ek-style exposition, but hopefully his influence helped us to make an important area of mathematics and a collection of powerful and versatile tools accessible to a wider audience.

Section~\ref{sec:prem} describes auxiliary statements that will be used in the subsequent proofs. These include classical results of major importance to several fields, and we include brief deductions of what we need so as to make this article self-contained. Section~\ref{sec:volume step} contains the proof of Lemma~\ref{lem:max volume}. A self-contained proof of Theorem~\ref{thm:main rank theorem}, using a clever iterative procedure of Giannopoulus~\cite{Gia96}, appears in Section~\ref{sec:gia}. This is followed by Section~\ref{sec:geometric}, where it is shown that Theorem~\ref{thm:main rank theorem} is equivalent to a geometric theorem of Giannopulos~\cite{Gia96}, thus yielding a shorter (but not self-contained) proof of Theorem~\ref{thm:main rank theorem}. Section~\ref{sec:MSS} contains the proof of Theorem~\ref{thm:MSS version}.

\subsection*{Acknowledgements} We thank Bill Johnson for helpful discussions.  This work was initiated while we were participating in the workshop
{\em Beyond Kadison--Singer: paving and consequences} at the American Institute of Mathematics. We thank the organizers for the excellent working conditions.

\section{Preliminaries}\label{sec:prem}

In this section we shall describe several tools that will be used in the ensuing arguments, and derive certain corollaries of them in forms that will be easy to quote as the need arises later.

\subsection{A bit of linear algebra}\label{sec:dual basis} We shall start with elementary linear algebraic reasoning that clarifies the meaning of some of the quantities that were discussed in the Introduction. In particular, we shall see why the identity~\eqref{eq:trace projection identity} holds true.

We work here in the setting of Theorem~\ref{thm:full column rank}, namely we are given  $k,m,n\in \N$ and a linear operator $A:\R^m \to \R^n$ satisfying $\rank(A)>k$. We are also fixing any subset $\omega\subset \m$ with $|\omega|=\rank(A)$ such that the vectors $\{Ae_i\}_{i\in \omega}\subset \R^n$ are linearly independent. For $j\in \omega$ we consider the linear subspace $F_j\subset \R^n$ that is defined in~\eqref{eq:def Fj}, namely $F_j$ is  the orthogonal complement of the span of $\{Ae_i\}_{i\in \omega\setminus\{j\}}\subset \R^n$. For every $j\in \omega$ define a vector $\mathsf{v}_j\in \R^n$ as follows.
\begin{equation}\label{eq:def vj}
\mathsf{v}_j\eqdef \frac{\proj_{F_j}Ae_j}{\|\proj_{F_j}Ae_j\|_2^2}\in \R^n.
\end{equation}

For every $j\in \omega$, since $I_n-\proj_{F_j}$ is the orthogonal projection onto $\spn(\{Ae_i\}_{i\in \omega\setminus\{j\}})\subset \R^n$, we know that $I_n-\proj_{F_j}Ae_j\in \spn(\{Ae_i\}_{i\in \omega\setminus\{j\}})$. So, $\{\proj_{F_j}Ae_j\}_{j\in \omega}\subset \spn(\{Ae_i\}_{i\in \omega})$, and therefore $\{\mathsf{v}_j\}_{j\in \omega}\subset \spn(\{Ae_i\}_{i\in \omega})$. For $j\in \omega$ we have $\langle \proj_{F_j}Ae_j,Ae_j\rangle = \|\proj_{F_j}Ae_j\|_2^2$, so $\langle \mathsf{v}_j,Ae_j\rangle=1$. Also, because $\proj_{F_j}Ae_j $ is orthogonal to $\{Ae_i\}_{i\in \omega\setminus\{j\}}$, we have $\langle \mathsf{v}_j,Ae_i\rangle=0$ for every $i\in \omega\setminus \{j\}$. Since $\{Ae_i\}_{i\in \omega}$ is a basis of $\spn(\{Ae_i\}_{i\in \omega})$ and $\{\mathsf{v}_j\}_{j\in \omega}\subset \spn(\{Ae_i\}_{i\in \omega})$, this means that $\{\mathsf{v}_j\}_{j\in \omega}$ is the {\em unique} dual basis of $\{Ae_i\}_{i\in \omega}$ in $\spn(\{Ae_i\}_{i\in \omega})$.

The operator $(AJ_\omega)^*(AJ_\omega):\R^\omega\to \R^\omega$ has rank $|\omega|=\rank(A)$, hence it is invertible. For every $j\in \omega$ we may therefore consider the vector
$$
\mathsf{w}_j \eqdef (AJ_\omega)\big((AJ_\omega)^*(AJ_\omega)\big)^{-1}e_j\in \spn(\{Ae_i\}_{i\in \omega}).
$$
Observe that for every $i,j\in \omega$ we have
\begin{multline*}
\langle \mathsf{w}_j,Ae_i\rangle = \left\langle (AJ_\omega)\big((AJ_\omega)^*(AJ_\omega)\big)^{-1}e_j,(AJ_\omega) e_i\right\rangle\\=\left\langle (AJ_\omega)^*(AJ_\omega)\big((AJ_\omega)^*(AJ_\omega)\big)^{-1}e_j, e_i\right\rangle=\langle e_j,e_i\rangle.
\end{multline*}
By the uniqueness of the dual basis of $\{Ae_i\}_{i\in \omega}$ in $\spn(\{Ae_i\}_{i\in \omega})$, we conclude that $\mathsf{v}_j=\mathsf{w}_j$ for every $j\in \omega$. This implies in particular that for every $j\in \omega$ we have
\begin{multline}\label{eq:equal to diagonal element}
\frac{1}{\|\proj_{F_j}Ae_j\|_2^2}=\|\mathsf{v}_j\|_2^2=\langle \mathsf{w}_j,\mathsf{w}_j\rangle =\left\langle(AJ_\omega)\big((AJ_\omega)^*(AJ_\omega)\big)^{-1}e_j,(AJ_\omega)\big((AJ_\omega)^*(AJ_\omega)\big)^{-1}e_j \right\rangle\\
=\left\langle\big((AJ_\omega)^*(AJ_\omega)\big)^{-1}e_j,(AJ_\omega)^*(AJ_\omega)\big((AJ_\omega)^*(AJ_\omega)\big)^{-1}e_j \right\rangle= \left\langle\big((AJ_\omega)^*(AJ_\omega)\big)^{-1}e_j,e_j \right\rangle.
\end{multline}
Consequently,
$$
\sum_{j\in \omega} \frac{1}{\|\proj_{F_j}Ae_j\|_2^2}=\sum_{j\in \omega} \left\langle\big((AJ_\omega)^*(AJ_\omega)\big)^{-1}e_j,e_j \right\rangle=\trace\left(\big((AJ_\omega)^*(AJ_\omega)\big)^{-1}\right)=\sum_{i=1}^{\rank(A)}\frac{1}{\ss_i(AJ_\omega)^2}.
$$
This is precisely the identity~\eqref{eq:trace projection identity}. The above discussion, and in particular the auxiliary vectors~\eqref{eq:def vj} and their properties that were derived above, will play a role in later arguments as well.

\subsection{Grothendieck}\label{sec:gro} We shall use later the following important theorem of Grothendieck~\cite{Gro53}.
\begin{theorem}[Little Grothendieck Inequality]\label{thm:gro} Fix $k,m,n\in \N$. Suppose that $T:\R^m\to \R^n$ is a linear operator. Then for every $x_1,\ldots,x_k\in \R^m$ there exists $i\in \m$ such that
\begin{equation}\label{eq:pi/2}
\sum_{r=1}^k \|Tx_r\|_2^2\le \frac{\pi}{2} \|T\|_{\ell_\infty^m\to \ell_2^n}^2 \sum_{r=1}^k x_{ri}^2.
\end{equation}
Here $\|T\|_{\ell_\infty^m\to \ell_2^n}\eqdef \max_{x\in [-1,1]^m}\|Tx\|_2$ is the operator norm of $T$ when it is viewed as an operator from $\ell_\infty^m$ to $\ell_2^n$,  and $x_{ri}=\langle x_r,e_i\rangle$ is the $i$'th coordinate of $x_r\in \R^m$.
\end{theorem}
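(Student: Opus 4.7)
The plan is to recast the inequality as a trace inequality between two positive semidefinite matrices and to prove it via the classical Grothendieck sign identity together with Schur's product theorem. By homogeneity we may assume $\|T\|_{\ell_\infty^m\to\ell_2^n}=1$ and, after scaling the $x_r$'s, that $\max_{i\in\m}\sum_{r=1}^k x_{ri}^2 = 1$; the task becomes to prove $\sum_{r=1}^k\|Tx_r\|_2^2\le\pi/2$. Set $Q\eqdef T^*T$ and $S\eqdef \sum_{r=1}^k x_r x_r^*$, both positive semidefinite on $\R^m$. Then $\sum_{r=1}^k\|Tx_r\|_2^2=\trace(QS)$ and $S_{ii}=\sum_r x_{ri}^2$. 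Because $x\mapsto x^T Qx$ is convex (as $Q\succeq 0$) and $[-1,1]^m$ is the convex hull of $\{-1,+1\}^m$, one has $1=\|T\|_{\ell_\infty^m\to\ell_2^n}^2=\sup_{\|x\|_\infty\le 1} x^T Qx=\max_{\epsilon\in\{-1,+1\}^m}\epsilon^T Q\epsilon$, and the goal is now to show $\trace(QS)\le (\pi/2)\max_\epsilon \epsilon^T Q\epsilon$.

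The next step is to Gram-factorize $S$. Write $S_{ij}=\langle v_i, v_j\rangle$ for vectors $v_1,\ldots,v_m$ in some Hilbert space, with $\|v_i\|_2^2=S_{ii}\le 1$ (coordinates $i$ for which $v_i=0$ contribute nothing and may be discarded). For each nonzero $v_i$ put $u_i\eqdef v_i/\|v_i\|_2$, draw a standard Gaussian $g$ on the ambient Hilbert space, and set $\epsilon_i\eqdef \sign\langle g, u_i\rangle\in\{-1,+1\}$. The classical Grothendieck sign identity (a polar-coordinates computation after reducing to the two-dimensional span of $u_i,u_j$) gives $\E[\epsilon_i\epsilon_j]=(2/\pi)\arcsin\langle u_i, u_j\rangle$. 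Letting $D\eqdef\mathrm{diag}(\|v_1\|_2,\ldots,\|v_m\|_2)$, so that $D\epsilon\in[-1,+1]^m$ almost surely, this rearranges to
\[
\E\bigl[(D\epsilon)^T Q (D\epsilon)\bigr]=\frac{2}{\pi}\sum_{i,j=1}^m Q_{ij}\|v_i\|_2\|v_j\|_2\arcsin\langle u_i, u_j\rangle.
\]

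The final step exploits Schur positivity: the Taylor expansion $\arcsin x=x+\sum_{k\ge 1}c_k x^{2k+1}$ has strictly positive coefficients $c_k$, and the Schur product theorem guarantees that each odd Hadamard power $[\langle u_i, u_j\rangle^{2k+1}]_{i,j}$ of the Gram matrix of the $u_i$'s is positive semidefinite. Since $\trace(AB)\ge 0$ whenever $A,B\succeq 0$, every term $\sum_{ij}Q_{ij}\|v_i\|_2\|v_j\|_2\langle u_i, u_j\rangle^{2k+1}$ is non-negative, whence
\[
\sum_{i,j=1}^m Q_{ij}\|v_i\|_2\|v_j\|_2\arcsin\langle u_i, u_j\rangle\ge \sum_{i,j=1}^m Q_{ij}\|v_i\|_2\|v_j\|_2\langle u_i, u_j\rangle=\sum_{i,j=1}^m Q_{ij}S_{ij}=\trace(QS).
\]
Substituting this into the displayed identity above and using $D\epsilon\in[-1,+1]^m$ yields $\trace(QS)\le (\pi/2)\E[(D\epsilon)^T Q(D\epsilon)]\le (\pi/2)\max_{\xi\in[-1,+1]^m}\xi^T Q\xi=\pi/2$, as required.

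The main obstacle is the dependence on the two classical ingredients: the Grothendieck sign identity (which provides the probabilistic representation of $\langle u_i,u_j\rangle$ in terms of $\arcsin$) and the Schur product theorem (which is needed because the pointwise inequality $\arcsin x\ge x$ is valid only for $x\ge 0$, while $\langle u_i,u_j\rangle$ can be negative; Schur positivity converts this into the correct matrix-level comparison through the positivity of odd Hadamard powers of Gram matrices). Once these two inputs are in place, the remainder is straightforward trace algebra and a normalization bookkeeping.
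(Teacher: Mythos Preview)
Your proof is correct, but it follows a genuinely different route from the paper's. The paper establishes the dual $\ell_1$-type estimate
\[
\sum_{j=1}^m \bigg(\sum_{r=1}^k (T^*Tx_{r})_j^2\bigg)^{1/2}\le \sqrt{\frac{\pi}{2}}
\]
directly, by forming the Gaussian combination $\sum_r g_r Tx_r$, using the identity $\E|g|=\sqrt{2/\pi}$ to relate the left side to an expectation, and then invoking $\ell_\infty^m\!\to\!\ell_2^n$ duality via a sign vector $z\in\{-1,1\}^m$; a final Cauchy--Schwarz yields~\eqref{eq:pi/2}. Your argument instead recasts the problem as the trace inequality $\trace(QS)\le(\pi/2)\max_{\e}\e^TQ\e$ for $Q=T^*T\succeq0$ and $S=\sum_r x_rx_r^*$ with $S_{ii}\le1$, and proves it by Gaussian hyperplane rounding (the Grothendieck sign identity $\E[\e_i\e_j]=(2/\pi)\arcsin\langle u_i,u_j\rangle$) together with Schur positivity of odd Hadamard powers of a Gram matrix. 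This is precisely Nesterov's $\pi/2$ theorem for the PSD quadratic SDP relaxation. The paper's route is lighter on tools (no Schur product theorem, no arcsin series) and sits squarely in the $p$-summing-operator framework, which dovetails with the Pietsch domination step that follows. Your route, on the other hand, makes the connection to semidefinite relaxation and randomized rounding transparent, and the same machinery extends naturally to the ``big'' Grothendieck inequality.
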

To see the significance of Theorem~\ref{thm:gro}, note that the definition of the operator norm of $T$ when it is viewed as an operator from $\ell_\infty^m$ to $\ell_2^n$ is nothing more than the smallest $C\ge 0$ such that for every $x\in \R^m$ there exists $i\in \m$ for which $\|Tx\|_2^2\le C^2x_i^2$. So, the case $k=1$ of~\eqref{eq:pi/2} without the factor $\pi/2$ in the right hand side  is a tautology. Theorem~\eqref{thm:gro} asserts that the case $k=1$ of~\eqref{eq:pi/2} automatically ``upgrades" to~\eqref{eq:pi/2} for general $k\in \N$ at the cost of a loss of the constant factor $\pi/2$.

 The literature contains clear expositions of Theorem~\ref{thm:gro} and its various useful generalizations and equivalent formulations; see e.g.~\cite{Pis86,DJT95}. Nevertheless, for the sake of completeness we shall now quickly explain why Theorem~\ref{thm:gro} holds true, following (a specialization of) the standard proofs of this fact~\cite{Pis86,DJT95}. We note that the factor $\pi/2$ in~\eqref{eq:pi/2} is sharp; see e.g.~the remark immediately following the proof of Theorem~5.4 in~\cite{Pis86}.

To prove Theorem~\ref{thm:gro}, by rescaling both $T$ and $(x_1,\ldots,x_k)$ we may assume without loss of generality that $\|T\|_{\ell_\infty^m\to \ell_2^n}=1$ and $\sum_{r=1}^k\|Tx_r\|_2^2=1$. With this normalization, we claim that
\begin{equation}\label{eq:dual pi/2}
\sum_{j=1}^m \bigg(\sum_{r=1}^k (T^*Tx_{r})_j^2\bigg)^{\frac12}\le \sqrt{\frac{\pi}{2}}.
\end{equation}
Once proven, \eqref{eq:dual pi/2} implies the desired estimate~\eqref{eq:pi/2} via the following application of Cauchy--Schwarz.
\begin{multline*}
1=\sum_{r=1}^k\|Tx_r\|_2^2=\sum_{r=1}^k \langle x_r,T^*Tx_r\rangle=\sum_{j=1}^m \sum_{r=1}^k x_{rj}(T^*Tx_r)_j\le \sum_{j=1}^m \bigg(\sum_{r=1}^k x_{rj}^2\bigg)^{\frac12}\bigg(\sum_{r=1}^k (T^*Tx_{r})_j^2\bigg)^{\frac12}\\\le
\max_{i\in \m} \bigg(\sum_{r=1}^k x_{ri}^2\bigg)^{\frac12} \sum_{j=1}^m \bigg(\sum_{r=1}^k (T^*Tx_{r})_j^2\bigg)^{\frac12}\stackrel{\eqref{eq:dual pi/2}}{\le}
\sqrt{\frac{\pi}{2}}\cdot \max_{i\in \m} \bigg(\sum_{r=1}^k x_{ri}^2\bigg)^{\frac12} .
\end{multline*}

To prove~\eqref{eq:dual pi/2}, let $\{g_r\}_{r=1}^k$ be i.i.d. standard Gaussian random variables. For every $j\in \m$ the random variable $\sum_{r=1}^k g_r (T^*Tx_r)_j$ is Gaussian with mean $0$ and variance $\sum_{r=1}^k (T^*Tx_r)_j^2$. So,
\begin{multline}\label{eq:use rotation invariance}
\E\bigg[\sum_{j=1}^m\Big|\Big(T^*\sum_{r=1}^k g_r Tx_r\Big)_j\Big|\bigg]=\E\bigg[\sum_{j=1}^m\Big|\sum_{r=1}^k g_r (T^*Tx_r)_j\Big|\bigg]=\sum_{j=1}^m\E\bigg[\Big|\sum_{r=1}^k g_r (T^*Tx_r)_j\Big|\bigg]\\=\E\big[|g_1|\big]\sum_{j=1}^m \bigg(\sum_{r=1}^k(T^*Tx_r)_j^2\bigg)^{\frac12}=\sqrt{\frac{2}{\pi}}\sum_{j=1}^m \bigg(\sum_{r=1}^k(T^*Tx_r)_j^2\bigg)^{\frac12}.
\end{multline}
Let $z\in \{-1,1\}^m$ be the random vector given by $z_j\eqdef \sign \Big(\big(T^*\sum_{r=1}^k g_r Tx_r\big)_j\Big)$. Then
\begin{multline}\label{eq:to take gaussian expectation}
\sum_{j=1}^m\Big|\Big(T^*\sum_{r=1}^k g_r Tx_r\Big)_j\Big|=\bigg\langle z,T^*\sum_{r=1}^k g_r Tx_r\bigg\rangle=\bigg\langle Tz,\sum_{r=1}^k g_r Tx_r\bigg\rangle\\\le \|Tz\|_2\cdot  \Big\|\sum_{r=1}^k g_r Tx_r\Big\|_2\le \|T\|_{\ell_\infty^m\to \ell_2^n}\cdot \|z\|_\infty\cdot \Big\|\sum_{r=1}^k g_r Tx_r\Big\|_2=\Big\|\sum_{r=1}^k g_r Tx_r\Big\|_2.
\end{multline}
 By taking expectations in~\eqref{eq:to take gaussian expectation} we see that
 \begin{multline*}
\sqrt{\frac{2}{\pi}}\sum_{j=1}^m \bigg(\sum_{r=1}^k(T^*Tx_r)_j^2\bigg)^{\frac12} \stackrel{\eqref{eq:use rotation invariance}}{=}\E\bigg[\sum_{j=1}^m\Big|\Big(T^*\sum_{r=1}^k g_r Tx_r\Big)_j\Big|\bigg]\\\stackrel{\eqref{eq:to take gaussian expectation}}{\le} \E\bigg[\Big\|\sum_{r=1}^k g_r Tx_r\Big\|_2\bigg]\le \bigg(\E\bigg[\Big\|\sum_{r=1}^k g_r Tx_r\Big\|_2^2\bigg]\bigg)^{\frac12}=\sum_{r=1}^k\|Tx_r\|_2^2=1,
\end{multline*}
 This is precisely the desired estimate~\eqref{eq:dual pi/2}, thus completing the proof of Theorem~\ref{thm:gro}. \qed

\subsection{Pietsch}\label{sec:pie} Another classical tool that will be used later (together with the Little Grothendieck Inequality) is the Pietsch Domination Theorem~\cite{Pie67}.

\begin{theorem}[Pietsch Domination]\label{thm:pie}
Fix $m,n\in \N$ and $M\in (0,\infty)$. Suppose that $T:\R^m\to \R^n$ is a linear operator such that for every $k\in \N$ and $x_1,\ldots,x_k\in \R^m$ there exists $i\in \m$ with $\sum_{r=1}^k \|Tx_r\|_2^2\le M^2\sum_{r=1}^k x_{ri}^2$. Then there exist $\mu_1,\ldots,\mu_m\in [0,1]$ with $\sum_{i=1}^m\mu_i=1$ such that
\begin{equation*}\label{eq:M summing norm}
\forall\, w=(w_1,\ldots,w_m)\in \R^m,\qquad \|Tw\|_2^2\le M^2\sum_{i=1}^m \mu_iw_i^2.
\end{equation*}
\end{theorem}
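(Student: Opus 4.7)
The strategy is a standard Hahn--Banach separation argument in $\R^m$. For each $x\in \R^m$, associate the vector $g_x\in \R^m$ defined coordinatewise by
\[
(g_x)_i \eqdef \|Tx\|_2^2 - M^2 x_i^2,\qquad i\in \m.
\]
The desired conclusion is equivalent to the existence of $\mu\in \Delta_m\eqdef\{\nu\in [0,1]^m:\sum_i \nu_i=1\}$ with $\langle \mu,g_x\rangle \le 0$ for every $x\in \R^m$, since $\langle \mu,g_x\rangle = \|Tx\|_2^2 - M^2\sum_i \mu_i x_i^2$.

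Let $\mathcal{K}\subset \R^m$ be the convex cone consisting of all finite sums $\sum_{r=1}^k g_{x_r}$ (where $k\in\N$ and $x_1,\ldots,x_k\in \R^m$). The cone $\mathcal K$ is closed under positive scalar multiplication because $g_{tx}=t^2 g_x$ for $t\ge 0$, and closed under addition by construction. The hypothesis of the theorem states precisely that for every $k\in \N$ and $x_1,\ldots,x_k\in \R^m$ there is some $i\in \m$ with $\big(\sum_r g_{x_r}\big)_i\le 0$. This is equivalent to saying that $\mathcal{K}\cap (0,\infty)^m=\emptyset$, i.e., $\mathcal K$ is disjoint from the open positive orthant.

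Now apply the hyperplane separation theorem in $\R^m$ to the disjoint convex sets $\mathcal K$ and $(0,\infty)^m$: there exist a nonzero $\mu\in \R^m$ and $c\in \R$ such that $\langle \mu,g\rangle \le c \le \langle \mu,v\rangle$ for all $g\in \mathcal{K}$ and all $v\in (0,\infty)^m$. Testing with $v=\e\mathbf{1}$ and letting $\e\to 0^+$ yields $c\le 0$, while $0\in \mathcal{K}$ forces $c\ge 0$; hence $c=0$. Testing $\langle \mu,v\rangle\ge 0$ on vectors $v$ that are arbitrarily close (in the closure of $(0,\infty)^m$) to the standard basis vector $e_i$ gives $\mu_i\ge 0$ for every $i\in\m$. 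Since $\mu\neq 0$ and all $\mu_i\ge 0$, we have $\sum_{i=1}^m \mu_i>0$, so after dividing $\mu$ by $\sum_i \mu_i$ we may assume $\mu\in \Delta_m$. The inequality $\langle \mu,g_x\rangle\le 0$ for every $x\in \R^m$ then rearranges to the required bound $\|Tx\|_2^2\le M^2\sum_i \mu_i x_i^2$.

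There is essentially no obstacle here once the right convex cone is identified: the whole proof hinges on recognizing that the hypothesis is exactly the statement that $\mathcal{K}$ avoids the open positive orthant, at which point finite-dimensional separation gives the nonnegative normal vector that, after normalization, is the sought-after probability distribution.
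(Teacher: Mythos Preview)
Your proof is correct and is essentially the same as the paper's: both form the convex set $\mathcal K=\{\sum_{r=1}^k g_{x_r}\}$ (the paper's $\mathsf K$), note that the hypothesis is exactly $\mathcal K\cap(0,\infty)^m=\emptyset$, separate by a hyperplane, deduce $\mu\ge 0$ from the orthant side, normalize, and read off the conclusion from $\langle\mu,g_x\rangle\le 0$. The only cosmetic difference is that you record explicitly that $\mathcal K$ is a cone (via $g_{tx}=t^2g_x$) and pin down the separating constant $c=0$, whereas the paper simply uses convexity and strict separation.
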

 Observe in passing that the conclusion of Theorem~\ref{thm:pie} immediately implies its assumption. Indeed, by applying this conclusion with $w=x_r$ for each $r\in \{1,\ldots,k\}$, and then summing the resulting inequalities over $r\in \{1,\ldots,k\}$, we get that  $\sum_{r=1}^k \|Tx_r\|_2^2\le \sum_{i=1}^m\mu_i(M^2\sum_{r=1}^k x_{ri}^2)$, so the existence of the desired index $i\in \m$ follows from the fact that $(\mu_1,\ldots,\mu_m)$ is a probability measure. The main point here is therefore the reverse implication, as stated in Theorem~\ref{thm:pie}.

In Banach space theoretic terminology, the assumption on the operator $T$ in Theorem~\ref{thm:pie} says that $T$ has {\em $2$-summing norm at most} $M$ when it is viewed as an operator from $\ell_\infty^m$ to $\ell_2^n$. We refer to the monographs~\cite{Tom89,DJT95} for much more on this topic, as well as proofs of (more general versions of) the Pietsch Domination Theorem. As before, for the sake of completeness we shall now explain why Theorem~\ref{thm:pie} holds true, following (a specialization of) the standard proofs~\cite{Tom89,DJT95} of this fact, which amount to an application of the separation theorem (equivalently, Hahn--Banach or duality of linear programming) to appropriately chosen convex sets.

Let $\mathsf{K}\subset \R^m$ be the set of all those vectors $y\in \R^m$ for which there exists $k\in \N$ and $x_1,\ldots,x_k\in \R^m$ such that $y_i=\sum_{r=1}^k \|Tx_r\|_2^2-M^2\sum_{r=1}^k x_{ri}^2$ for every $i\in \m$. It is immediate to check that  $\mathsf{K}$ is convex, and the assumption on $T$ can be restated as saying that $\mathsf{K}\cap (0,\infty)^m=\emptyset$. By the separation theorem there exist $\mu=(\mu_1,\ldots,\mu_m)\in \R^m$ such that $\sum_{i=1}^m\mu_iy_i<\sum_{i=1}^m \mu_i z_i$ for every $y\in \mathsf{K}$ and $z\in (0,\infty)^m$. In particular, $\mu\neq 0$ and $\inf_{z\in (0,\infty)^m} \langle z,\mu\rangle>-\infty$, so necessarily  $\mu_i\ge 0$ for all $i\in \m$. We may rescale so that $\sum_{i=1}^m \mu_i=1$. If $w\in \R^m$ then  $(\|Tw\|_2^2-M^2w_i^2)_{i=1}^m\in \mathsf{K}$, so $\|Tw\|_2^2-M^2\sum_{i=1}^m\mu_iw_i^2=\sum_{i=1}^m \mu_i(\|Tw\|_2^2-M^2w_i^2)\le \inf_{z\in (0,\infty)^m} \sum_{i=1}^m\mu_iz_i=0$.\qed

\medskip
The following lemma is a combination of the Little Grothendieck Inequality and the Pietsch Domination Theorem; this is how Theorem~\ref{thm:gro} and Theorem~\ref{thm:pie} will be used in what follows.

\begin{lemma}\label{lem:grothendieck pietsch}
Fix $m,n\in \N$ and $\e\in (0,1)$. Let $T:\R^n\to \R^m$ be a linear operator. Then there exists a subset $\s\subset \m$ with $|\s|\ge (1-\e)m$ such that
\begin{equation}\label{eq:norm improvement with proj}
\left\|\proj_{\R^\s}T\right\|_{\S_\infty}\le \sqrt{\frac{\pi}{2\e m}}\cdot  \|T\|_{\ell_2^n\to \ell_1^m}.
\end{equation}
\end{lemma}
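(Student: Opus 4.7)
The plan is to deduce Lemma~\ref{lem:grothendieck pietsch} as a relatively mechanical combination of the two tools that have just been recalled (Theorem~\ref{thm:gro} and Theorem~\ref{thm:pie}), applied to the adjoint $T^{*}:\R^{m}\to\R^{n}$, followed by a Markov-type pigeonholing step on the resulting probability weights. The only preliminary observation is the standard duality identity
\[
\|T^{*}\|_{\ell_{\infty}^{m}\to\ell_{2}^{n}}\;=\;\sup_{x\in[-1,1]^{m}}\sup_{y\in B_{\ell_{2}^{n}}}\langle x,Ty\rangle\;=\;\sup_{y\in B_{\ell_{2}^{n}}}\|Ty\|_{1}\;=\;\|T\|_{\ell_{2}^{n}\to\ell_{1}^{m}},
\]
so that all the quantities appearing below are expressible in the norm $\|T\|_{\ell_{2}^{n}\to\ell_{1}^{m}}$ that appears in~\eqref{eq:norm improvement with proj}.

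First I would apply Theorem~\ref{thm:gro} to $T^{*}$: for every $k\in\N$ and every $x_{1},\ldots,x_{k}\in\R^{m}$ there exists $i\in\m$ with $\sum_{r=1}^{k}\|T^{*}x_{r}\|_{2}^{2}\le\frac{\pi}{2}\|T\|_{\ell_{2}^{n}\to\ell_{1}^{m}}^{2}\sum_{r=1}^{k}x_{ri}^{2}$. This is precisely the hypothesis of Theorem~\ref{thm:pie} for $T^{*}$ with $M=\sqrt{\pi/2}\,\|T\|_{\ell_{2}^{n}\to\ell_{1}^{m}}$, so the Pietsch Domination Theorem supplies a probability vector $(\mu_{1},\ldots,\mu_{m})\in[0,1]^{m}$ with $\sum_{i=1}^{m}\mu_{i}=1$ such that
\[
\forall\,w\in\R^{m},\qquad \|T^{*}w\|_{2}^{2}\;\le\;\frac{\pi}{2}\,\|T\|_{\ell_{2}^{n}\to\ell_{1}^{m}}^{2}\sum_{i=1}^{m}\mu_{i}w_{i}^{2}.
\]

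Second, I would extract the desired coordinate subset by discarding the indices on which the measure $(\mu_{i})$ is concentrated. Set $\s\eqdef\{i\in\m:\mu_{i}\le 1/(\e m)\}$; since a strict reverse inequality at more than $\e m$ coordinates would force $\sum_{i}\mu_{i}>1$, a one-line Markov estimate gives $|\m\setminus\s|<\e m$, hence $|\s|\ge(1-\e)m$. Applied to any $w\in\R^{m}$ supported on $\s$, the Pietsch inequality yields
\[
\|T^{*}J_{\s}w\|_{2}^{2}\;\le\;\frac{\pi}{2}\,\|T\|_{\ell_{2}^{n}\to\ell_{1}^{m}}^{2}\sum_{i\in\s}\mu_{i}w_{i}^{2}\;\le\;\frac{\pi}{2\e m}\,\|T\|_{\ell_{2}^{n}\to\ell_{1}^{m}}^{2}\,\|w\|_{2}^{2}.
\]
Taking square roots and recalling the elementary identity $\|\proj_{\R^{\s}}T\|_{\S_{\infty}}=\|(\proj_{\R^{\s}}T)^{*}\|_{\S_{\infty}}=\|T^{*}J_{\s}\|_{\S_{\infty}}$ then gives exactly~\eqref{eq:norm improvement with proj}.

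There is no real obstacle: the proof is essentially the observation that the $2$-summing norm of $T^{*}:\ell_{\infty}^{m}\to\ell_{2}^{n}$ (which the Little Grothendieck Inequality controls in terms of the operator norm) factorizes through a diagonal operator whose weights can be pruned by Markov's inequality. The only mild subtleties I would want to state explicitly are (i) the duality $\|T^{*}\|_{\ell_{\infty}^{m}\to\ell_{2}^{n}}=\|T\|_{\ell_{2}^{n}\to\ell_{1}^{m}}$, so that the bound is in the form stated, and (ii) the fact that $|\s|\ge(1-\e)m$ from the strict inequality $|\m\setminus\s|<\e m$ (which is all that is ever needed downstream, and can be rounded to an integer cardinality by removing arbitrary elements from $\s$ if the application demands $|\s|=\lceil(1-\e)m\rceil$).
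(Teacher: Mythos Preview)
Your proposal is correct and follows essentially the same approach as the paper: apply the Little Grothendieck Inequality to $T^{*}$, feed the resulting estimate into the Pietsch Domination Theorem to obtain a probability vector $(\mu_i)$, define $\sigma=\{i:\mu_i\le 1/(\e m)\}$, and conclude via Markov's inequality. The only cosmetic difference is in the last step: the paper bounds $\|\proj_{\R^{\sigma}}Tx\|_2$ directly by pairing with a norming vector $y$ and using Cauchy--Schwarz, whereas you invoke the adjoint identity $\|\proj_{\R^{\sigma}}T\|_{\S_\infty}=\|T^{*}J_{\sigma}\|_{\S_\infty}$ and bound the latter; these are equivalent.
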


\begin{proof}
Since we have $\|T^*\|_{\ell_\infty^m\to \ell_2^n}=\|T\|_{\ell_2^n\to \ell_1^m}$, an application of Theorem~\ref{thm:gro} to $T^*:\R^m\to \R^n$ shows that the assumption of Theorem~\ref{thm:pie} holds true with $T$ replaced by $T^*$ and $M=\sqrt{\pi/2}\cdot \|T\|_{\ell_2^n\to \ell_1^m}$. Hence, Theorem~\ref{thm:pie} shows that there exists $\mu \in [0,1]^m$ with $\sum_{i=1}^m\mu_i=1$ such that
\begin{equation}\label{eq:T* norm}
\forall\, y\in \R^m,\qquad \|T^*y\|_2^2\le \frac{\pi}{2}\|T\|_{\ell_2^n\to \ell_1^m}^2\sum_{i=1}^m\mu_iy_i^2.
\end{equation}
Define
\begin{equation}\label{eq:sigma from domination measure}
\s\eqdef \left\{i\in \m:\ \mu_i\le \frac{1}{m\e}\right\}.
\end{equation}
Since $\mu$ is a probability measure on $\m$, by Markov's inequality we have $|\s|\ge (1-\e)m$.

Take $x\in \R^n$ and choose $y\in \R^m$ such that $\|y\|_2=1$ and $\|\proj_{\R^\s}Tx\|_2=\langle y,\proj_{\R^\s}Tx\rangle$. Then,
\begin{multline}\label{eq:use markov}
\|\proj_{\R^\s}Tx\|_2^2=\langle y,\proj_{\R^\s}Tx\rangle^2=\langle T^*\proj_{\R^\s}y,x\rangle^2\le \|T^*\proj_{\R^\s}y\|_2^2\cdot\|x\|_2^2\\\stackrel{\eqref{eq:T* norm}}{\le}
\frac{\pi}{2}\|T\|_{\ell_2^n\to \ell_1^m}^2\cdot\|x\|_2^2\sum_{i\in \s}\mu_iy_i^2\stackrel{\eqref{eq:sigma from domination measure}}{\le}\frac{\pi}{2m\e}\|T\|_{\ell_2^n\to \ell_1^m}^2\cdot\|x\|_2^2\cdot\|y\|_2^2=\frac{\pi}{2m\e}\|T\|_{\ell_2^n\to \ell_1^m}^2\cdot\|x\|_2^2.
\end{multline}
Since~\eqref{eq:use markov} holds true for every $x\in \R^n$, this completes the proof of the desired estimate~\eqref{eq:norm improvement with proj}.
\end{proof}

\subsection{Sauer--Shelah}\label{sec:sauer shelah} The Sauer--Shelah lemma~\cite{Sau72,She72} is a fundamental combinatorial principle of wide applicability that will be used crucially later.

\begin{lemma}[Sauer--Shelah]\label{sauer-shelah}
Fix $m,n\in \N$. Suppose that $\Omega\subseteq \{-1,1\}^n$ satisfies
$
|\Omega| > \sum_{k=0}^{m-1} {n\choose k}.
$
Then there exists a subset $\sigma\subseteq \n$ with  $|\s|\ge m$ such that $\proj_{\R^\sigma} \Omega= \{-1,1\}^\s$, i.e., for every $\e\in \{-1,1\}^\s$ there exists $\d\in \Omega$ such that $\d_j=\e_j$ for every $j\in \s$. In particular, if $|\Omega|> 2^{n-1}$ then such a subset $\s\subset \n$ exists with $|\s|\ge \lceil (n+1)/2\rceil \ge n/2$.
\end{lemma}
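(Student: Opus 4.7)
My plan is to derive Sauer--Shelah from a stronger combinatorial statement due to Pajor: for every $\Omega \subseteq \{-1,1\}^n$, the number of subsets $\tau \subseteq \{1,\ldots,n\}$ that are \emph{shattered} by $\Omega$ (meaning $\proj_{\R^\tau}\Omega = \{-1,1\}^\tau$) is at least $|\Omega|$. Once Pajor's inequality is in hand, Lemma~\ref{sauer-shelah} is immediate: the number of subsets of $\{1,\ldots,n\}$ of cardinality strictly less than $m$ equals $\sum_{k=0}^{m-1}\binom{n}{k}$, so the hypothesis $|\Omega| > \sum_{k=0}^{m-1}\binom{n}{k}$ forces some shattered subset to have cardinality at least $m$, which is precisely the desired conclusion.

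The main task is therefore to prove Pajor's inequality, which I would do by induction on $n$, with the case $n=0$ being trivial. For the inductive step, split $\Omega = \Omega_+ \cupdot \Omega_-$ according to the value of the $n$-th coordinate, let $\pi: \{-1,1\}^n \to \{-1,1\}^{n-1}$ be the projection forgetting this coordinate, and apply the inductive hypothesis to $\pi(\Omega_+)$ and $\pi(\Omega_-)$ to obtain families $\mathcal{F}_+$ and $\mathcal{F}_-$ of subsets of $\{1,\ldots,n-1\}$ shattered by $\pi(\Omega_+)$ and $\pi(\Omega_-)$ respectively, with $|\mathcal{F}_\pm| \ge |\Omega_\pm|$. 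Every member of $\mathcal{F}_+ \cup \mathcal{F}_-$ is still shattered by $\Omega$. The key observation is that for each $\tau \in \mathcal{F}_+ \cap \mathcal{F}_-$, the set $\tau \cup \{n\}$ is \emph{also} shattered by $\Omega$, since every pattern on $\tau$ can be realized with either sign of the $n$-th coordinate, using witnesses drawn from $\Omega_+$ or $\Omega_-$ as needed. The subsets produced in this way never contain $n$ in the first case and always contain $n$ in the second, so the two collections are disjoint, and the total number of shattered subsets is at least
$$
|\mathcal{F}_+ \cup \mathcal{F}_-| + |\mathcal{F}_+ \cap \mathcal{F}_-| = |\mathcal{F}_+| + |\mathcal{F}_-| \ge |\Omega_+| + |\Omega_-| = |\Omega|,
$$
which closes the induction. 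This inclusion--exclusion trick in the inductive step is really the only nontrivial idea in the argument, and is the main (mild) obstacle; the rest is bookkeeping.

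Finally, for the ``in particular'' clause, I would take the largest integer $m$ with $\sum_{k=0}^{m-1}\binom{n}{k} \le 2^{n-1}$. The symmetry $\binom{n}{k} = \binom{n}{n-k}$ together with $\sum_{k=0}^n \binom{n}{k} = 2^n$ ensures that this $m$ is at least $\lceil(n+1)/2\rceil$, and in particular at least $n/2$; applying the main bound to this $m$ yields the stated quantitative refinement. No new ideas are required beyond the binomial identity.
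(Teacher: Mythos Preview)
Your proposal is correct and follows essentially the same route as the paper: both prove Pajor's strengthening (that the number of shattered subsets is at least $|\Omega|$) by induction on $n$, splitting $\Omega$ according to the last coordinate and using the observation that a set shattered by both halves can be augmented by the last coordinate, together with inclusion--exclusion. The paper's argument is slightly more compressed (it works directly with the full families $\mathrm{\bf sh}(\Omega_{\pm 1})$ rather than naming subfamilies $\mathcal{F}_\pm$), but the content is identical.
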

It is simple to prove Lemma~\ref{sauer-shelah} by induction on $n$ when one strengthens the inductive hypothesis as follows. Denoting $\mathrm{\bf sh}(\Omega)= \{\s\subset \n:\ \proj_{\R^\sigma} \Omega= \{-1,1\}^\s\}$, we claim that $|\mathrm{\bf sh}(\Omega)|\ge |\Omega|$; this would imply Lemma~\ref{sauer-shelah} since the number of subsets of $\n$ of size at most $m-1$ equals $\sum_{k=0}^{m-1} {n\choose k}$. This stronger statement is due to Pajor~\cite{Paj85}, and the resulting very short inductive proof which we shall now sketch for completeness appears as Theorem~1.1 in~\cite{ARS02}.

The case $n=1$ holds trivially (here we use the convention that $\{-1,1\}^\emptyset =\emptyset $ and $\proj_{\R^\emptyset}\Omega=\emptyset$). Assuming the validity of the above statement for $n$, take $\Omega\subset\{-1,1\}^{n+1}=\{-1,1\}^n\times \{-1,1\}$ and denote $\Omega_{1}=\{x\in \{-1,1\}^n:\ (x,1)\in \Omega\}$ and $\Omega_{-1}=\{x\in \{-1,1\}^n:\ (x,-1)\in \Omega\}$. Then $|\Omega_1|+|\Omega_{-1}|=|\Omega|$ and by the inductive hypothesis we have $|\mathrm{\bf sh}(\Omega_1)|\ge |\Omega_1|$ and $|\mathrm{\bf sh}(\Omega_{-1})|\ge |\Omega_{-1}|$. By our definitions we have $\mathrm{\bf sh}(\Omega)\supseteq (\mathrm{\bf sh}(\Omega_1)\cup \mathrm{\bf sh}(\Omega_{-1}))\cupdot \{\s\cup \{n+1\}:\ \s\in \mathrm{\bf sh}(\Omega_1)\cap \mathrm{\bf sh}(\Omega_{-1})\}$, so  $|\mathrm{\bf sh}(\Omega)|\ge |\mathrm{\bf sh}(\Omega_1)\cup \mathrm{\bf sh}(\Omega_{-1})|+|\mathrm{\bf sh}(\Omega_1)\cap \mathrm{\bf sh}(\Omega_{-1})|=|\mathrm{\bf sh}(\Omega_1)|+ |\mathrm{\bf sh}(\Omega_{-1})|\ge |\Omega_1|+|\Omega_{-1}|=|\Omega|$. \qed

\subsection{Fan and Hilbert--Schmidt}
We record for ease of future use the following lemma that controls the influence of multiplication by an orthogonal projection on the Hilbert--Schmidt norm of a linear operator. Its proof is a simple consequence of the classical {\em Fan Maximum Principle}~\cite{Fan49}, but we couldn't locate a reference where it is stated explicitly in the form that we will use later.

\begin{lemma}\label{lem:ky}
Fix $m,n\in \N$ and $r\in \n$. Let $A:\R^m\to \R^n$ be a linear operator and let $\P:\R^n\to \R^n$ be an orthogonal projection of rank $r$. Then
$$
\|\P A\|_{\S_2}\ge \bigg(\sum_{i=n-r+1}^m \ss_i(A)^2\bigg)^{\frac12}.
$$
\end{lemma}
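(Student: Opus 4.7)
The plan is to expand $\|\P A\|_{\S_2}^2$ via the singular value decomposition of $A$ and reduce the lemma to an elementary one-dimensional linear optimization problem.

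Let $A = U \Sigma V^*$ be the SVD, where $u_1,\ldots,u_n$ are the columns of $U$ and $v_1,\ldots,v_m$ are the columns of $V$, satisfying $A v_j = \ss_j(A) u_j$ under the convention $\ss_j(A) = 0$ for $j > \min(m,n)$. Since $v_1,\ldots,v_m$ is an orthonormal basis of $\R^m$, I would compute
$$
\|\P A\|_{\S_2}^2 = \sum_{j=1}^m \|\P A v_j\|_2^2 = \sum_{j=1}^{n} \ss_j(A)^2 \|\P u_j\|_2^2 = \sum_{j=1}^n \ss_j(A)^2 \alpha_j,
$$
where $\alpha_j \eqdef \|\P u_j\|_2^2 \in [0,1]$. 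Since $u_1,\ldots,u_n$ is an orthonormal basis of $\R^n$ and $\P = \P^*\P$, we also have $\sum_{j=1}^n \alpha_j = \sum_{j=1}^n \langle u_j, \P u_j\rangle = \trace(\P) = r$.

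The lemma thus reduces to showing that the minimum of $\sum_{j=1}^n \ss_j(A)^2 \alpha_j$ over the polytope $\{\alpha \in [0,1]^n : \sum_{j=1}^n \alpha_j = r\}$ equals $\sum_{j=n-r+1}^n \ss_j(A)^2$. Since a linear functional on a compact polytope attains its minimum at a vertex, and the vertices of this polytope are precisely the indicator vectors of $r$-element subsets of $\{1,\ldots,n\}$, the minimum is attained by the indicator of the $r$ indices with smallest $\ss_j(A)^2$, namely $\{n-r+1,\ldots,n\}$. Finally, the identity $\sum_{j=n-r+1}^n \ss_j(A)^2 = \sum_{j=n-r+1}^m \ss_j(A)^2$ holds under the zero-extension convention: when $m \ge n$ the extra indices $j \in \{n+1,\ldots,m\}$ contribute zero, and when $m < n$ the indices $j \in \{m+1,\ldots,n\}$ already contribute zero (with the sum being interpreted as empty when $n-r+1 > m$).

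There is essentially no obstacle; the single point requiring care is the bookkeeping around the asymmetric indexing between $m$ and $n$, which is absorbed cleanly by the convention that $\ss_j(A)$ vanishes beyond $\min(m,n)$. The argument above is in fact just a direct derivation of (and alternative to quoting) Fan's minimum principle applied to the positive semidefinite operator $AA^*$ on $\R^n$, since $\|\P A\|_{\S_2}^2 = \trace(\P AA^*)$.
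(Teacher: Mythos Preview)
Your proof is correct and is essentially the same argument as the paper's: both rest on Fan's principle for $\trace(\P AA^*)$, with the paper quoting the maximum form for the complementary projection $I_n-\P$ as a black box and then subtracting from $\trace(AA^*)$, while you unpack the minimum form directly via the SVD and the elementary observation that a linear functional on the slice $\{\alpha\in[0,1]^n:\sum_j\alpha_j=r\}$ is minimized at an indicator vector. Your version is slightly more self-contained, but there is no substantive difference in content.
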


\begin{proof} Since $I_n-\P$ is an orthogonal projection of rank $n-r$, by a classical result of Fan~\cite{Fan49},
\begin{equation}\label{eq:use ky fan}
\trace(AA^*(I_n-\P))\le \sum_{i=1}^{n-r} \ss_i(AA^*)= \sum_{i=1}^{n-r} \ss_i(A)^2
\end{equation}
The proof of~\eqref{eq:use ky fan} is simple; see e.g.~\cite[Lemma~8.1.8]{Sto13} for a short proof and~\cite[Chapter~III]{Bha97} for more general variational principles along these lines. Now, since $\P$ is an orthogonal projection,
\begin{multline*}
\|\P A\|_{\S_2}^2 =\trace((\P A)^*(\P A))=\trace (A^*\P A)=\trace(AA^* \P)=\trace(AA^*)-\trace(AA^*(I_n-\P))\\=\sum_{i=1}^m \ss_i(A)^2-\trace(AA^*(I_n-\P))\stackrel{\eqref{eq:use ky fan}}{\ge} \sum_{i=1}^m \ss_i(A)^2-\sum_{i=1}^{n-r} \ss_i(A)^2=\sum_{i=n-r+1}^{m} \ss_i(A)^2.\tag*{\qedhere}
\end{multline*}
\end{proof}

\section{Proof of Lemma~\ref{lem:max volume}}\label{sec:volume step}

In this section we shall prove Lemma~\ref{lem:max volume} in a more general weighted form that corresponds to the renormalization step in Vershynin's Theorem, i.e., Theorem~\ref{th-vershynin}. Using this weighted version of Lemma~\ref{lem:max volume}, one can directly deduce weighted versions of Theorem~\ref{thm:main rank theorem} and Theorem~\ref{thm:our schatten} as well, by combining Lemma~\ref{lem:extract big projections} below  with Theorem~\ref{thm:full column rank}, exactly as we did in the Introduction.

\begin{lemma}[weighted version of Lemma~\ref{lem:max volume}]\label{lem:extract big projections} Fix $r,m,n\in \N$. Let $A:\R^m\to \R^n$ be a linear operator with $\rank(A)\ge r$. For every $\tau \subset \m$ let $E_\tau\subset \R^n$ be defined as in~\eqref{eq:def E sigma}, i.e., it is the orthogonal complement of the span of $\{Ae_j\}_{j\in \tau}\subset \R^n$. Then for every $d_1,\ldots,d_m\in (0,\infty)$ there exists a subset $\tau\subset \m$ with $|\tau|=r$ such that
\begin{equation}\label{eq:desired projections big}
\forall\, j\in \tau,\qquad \big\|\proj_{E_{\tau\setminus\{j\}}}Ae_j\big\|_2 \ge \frac{d_j}{\sqrt{\sum_{i=1}^m d_i^2}} \bigg(\sum_{i=r}^m \ss_i(A)^2\bigg)^{\frac12}.
\end{equation}
\end{lemma}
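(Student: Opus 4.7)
The plan is to carry out a maximum-volume selection argument: to choose $\tau$ maximizing a weighted Gram determinant and then to extract the lower bounds on the individual heights by comparing to the determinants obtained from swapping one index of $\tau$ with an outside index. Concretely, I would consider the functional on $r$-element subsets $\tau\subset\m$ defined by
\[
V(\tau)\eqdef \frac{\det\bigl((AJ_\tau)^*(AJ_\tau)\bigr)}{\prod_{j\in\tau}d_j^2},
\]
and let $\tau$ be any maximizer of $V$ among all $r$-element subsets of $\m$. The hypothesis $\rank(A)\ge r$ guarantees that there exists some $\tau$ with $\det((AJ_\tau)^*(AJ_\tau))>0$; since the $d_j$ are positive and the maximum is taken over finitely many sets, it is attained, and at the maximizer the columns $\{Ae_j\}_{j\in\tau}$ are automatically linearly independent.

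Next I would use the standard height factorization of the Gram determinant: for every $j\in\tau$,
\[
\det\bigl((AJ_\tau)^*(AJ_\tau)\bigr)=\bigl\|\proj_{E_{\tau\setminus\{j\}}}Ae_j\bigr\|_2^2\cdot\det\bigl((AJ_{\tau\setminus\{j\}})^*(AJ_{\tau\setminus\{j\}})\bigr),
\]
and the identical identity with $j$ replaced by any $k\in\m\setminus(\tau\setminus\{j\})$ and $\tau$ replaced by $\tau'=(\tau\setminus\{j\})\cup\{k\}$, using the same base $\tau\setminus\{j\}$. Comparing $V(\tau)\ge V(\tau')$ and cancelling the common base Gramian, which is strictly positive because $\{Ae_i\}_{i\in\tau\setminus\{j\}}$ is independent, the optimality inequality reduces to the purely geometric comparison
\[
\frac{\bigl\|\proj_{E_{\tau\setminus\{j\}}}Ae_j\bigr\|_2^2}{d_j^2}\ge \frac{\bigl\|\proj_{E_{\tau\setminus\{j\}}}Ae_k\bigr\|_2^2}{d_k^2}
\]
for every $k\in\m$ (for $k\in\tau\setminus\{j\}$ the right-hand side vanishes, and for $k=j$ the two sides coincide).

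To close, I would sum the last inequality over $k\in\{1,\ldots,m\}$ and invoke Lemma~\ref{lem:ky}. Since $\{Ae_i\}_{i\in\tau\setminus\{j\}}$ span an $(r{-}1)$-dimensional subspace, the orthogonal projection $\proj_{E_{\tau\setminus\{j\}}}$ has rank exactly $n-r+1$, so Lemma~\ref{lem:ky} supplies
\[
\sum_{i=r}^{m}\ss_i(A)^2\le \bigl\|\proj_{E_{\tau\setminus\{j\}}}A\bigr\|_{\S_2}^2=\sum_{k=1}^{m}\bigl\|\proj_{E_{\tau\setminus\{j\}}}Ae_k\bigr\|_2^2\le \frac{\bigl\|\proj_{E_{\tau\setminus\{j\}}}Ae_j\bigr\|_2^2}{d_j^2}\sum_{k=1}^{m}d_k^2,
\]
which upon rearrangement is exactly the assertion~\eqref{eq:desired projections big}.

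The only point that I expect to require some care is the correct choice of weighting in $V$: placing $\prod d_j^2$ in the denominator (rather than the numerator) is precisely what orients the swap comparison so that $d_k^2$ appears alongside $\|\proj_{E_{\tau\setminus\{j\}}}Ae_k\|_2^2$ and the Ky Fan bound yields $\sum_k d_k^2$ in the denominator of the final estimate, matching~\eqref{eq:desired projections big}. Beyond this calibration, everything else is a routine combination of the classical height formula for Gram determinants and Lemma~\ref{lem:ky}.
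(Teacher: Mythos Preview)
Your argument is correct and is essentially the same as the paper's proof: the paper maximizes the $r$-dimensional volume of the cross-polytope $\conv\{\pm Ae_j/d_j\}_{j\in\tau}$, which is a monotone function of your weighted Gram determinant $V(\tau)$, and its cone-volume identity is exactly your height factorization of the Gramian. The swap comparison and the concluding appeal to Lemma~\ref{lem:ky} are carried out identically in both proofs.
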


\begin{proof} For every $\tau\subset \m$ let $K_\tau\subset \R^n$ be the convex hull of the vectors $\{\pm Ae_j/d_j\}_{j\in \tau}$, i.e.,
\begin{equation}\label{eq:def K sigma}
K_\tau\eqdef \conv\left(\left\{\frac{1}{d_j}Ae_j:\ j\in \tau\right\}\cup \left\{-\frac{1}{d_j}Ae_j:\ j\in \tau\right\}\right).
\end{equation}
The desired subset $\tau\subset\m$ will be chosen so as to maximize the $r$-dimensional volume of the convex hull of $K_\s$ over all those subsets $\s$ of $\m$ of size $r$. Namely, we shall fix from now on a subset $\tau\subset \m$ with $|\tau|=r$ such that
\begin{equation}\label{eq:sigma choice volume}
\vol_r(K_\tau)=\max_{\substack{\s\subset \m\\ |\s|=r}}\vol_r(K_\s).
\end{equation}

Take any $\beta\subset \m$ with $|\beta|=r-1$ and fix $i\in \m\setminus \beta$. Then by the definition~\eqref{eq:def K sigma} we have $K_{\beta\cup\{i\}}=\conv(\{\pm Ae_i/d_i\}\cup K_\beta)$, i.e.,  $K_{\beta\cup\{i\}}$ is the union of the two cones with base $K_\beta$ and apexes at $\pm Ae_i/d_i$. Recalling~\eqref{eq:def E sigma}, note that $K_\beta\subset \spn(K_\beta)=E_\beta^\perp$. Hence, the   height of these two cones equals the Euclidean length of the orthogonal projection of $Ae_i/d_i$ onto $E_\beta$. Therefore,
\begin{equation}\label{eq:volume formula}
\vol_r\!\left(K_{\beta\cup\{i\}}\right)=\frac{2\big\|\proj_{E_\beta}Ae_i\big\|_2\vol_{r-1}(K_\beta)}{rd_i}.
\end{equation}

Returning to the subset $\tau$ that was chosen in~\eqref{eq:sigma choice volume}, we see that if $j\in \tau$ and $i\in \m$  then
\begin{multline}\label{eq:proj max}
\frac{2\big\|\proj_{E_{\tau\setminus\{j\}}}Ae_j\big\|_2\vol_{r-1}\!\left(K_{\tau\setminus\{j\}}\right)}{rd_j}\stackrel{\eqref{eq:volume formula}}{=}\vol_r(K_\tau)\\\stackrel{\eqref{eq:sigma choice volume}}{\ge} \vol_r\!\left(K_{(\tau\setminus\{j\})\cup\{i\}}\right)\stackrel{\eqref{eq:volume formula}}{=}\frac{2\big\|\proj_{E_{\tau\setminus\{j\}}}Ae_i\big\|_2\vol_{r-1}\!\left(K_{\tau\setminus\{j\}}\right)}{rd_i}.
\end{multline}
Since we are assuming that $r\le \rank(A)$, we know that $\vol_r(K_\tau)>0$. It therefore follows from~\eqref{eq:proj max} that also $\vol_{r-1}\!\left(K_{\tau\setminus\{j\}}\right)>0$, so me may cancel the quantity $2\vol_{r-1}\!\left(K_{\tau\setminus\{j\}}\right)/r$ from both sides of~\eqref{eq:proj max}. Since the resulting estimate holds true for every $i\in \m$, we conclude that
\begin{equation}\label{eq:proj max 2}
\forall\, j\in \tau,\qquad \frac{\big\|\proj_{E_{\tau\setminus\{j\}}}Ae_j\big\|_2}{d_j}=\max_{i\in \m} \frac{\big\|\proj_{E_{\tau\setminus\{j\}}}Ae_i\big\|_2}{d_i}.
\end{equation}
Consequently, for every $j\in \tau$ we have
$$
\frac{\big\|\proj_{E_{\tau\setminus\{j\}}}Ae_j\big\|_2^2}{d_j^2}\bigg( \sum_{i=1}^m d_i^2\bigg)\stackrel{\eqref{eq:proj max 2}}{\ge} \sum_{i=1}^m  \big\|\proj_{E_{\tau\setminus\{j\}}}Ae_i\big\|_2^2=\big\|\proj_{E_{\tau\setminus\{j\}}}A\big\|_{\S_2}^2.
$$
Equivalently,
\begin{equation}\label{eq:projections on both sides}
\forall\, j\in \tau,\qquad \big\|\proj_{E_{\tau\setminus\{j\}}}Ae_j\big\|_2\ge \frac{d_j}{\sqrt{\sum_{i=1}^m d_i^2}} \big\|\proj_{E_{\tau\setminus\{j\}}}A\big\|_{\S_2}.
\end{equation}
Recalling~\eqref{eq:def E sigma}, since $|\tau|=r$ we know that $\dim( E_{\tau\setminus\{j\}})=n-(r-1)$ for every $j\in \tau$. Consequently, $\proj_{E_{\tau\setminus\{j\}}}:\R^n\to \R^n$ is an orthogonal projection of rank $n-(r-1)$, so that the desired inequality~\eqref{eq:desired projections big} follows from~\eqref{eq:projections on both sides} and Lemma~\ref{lem:ky}.
\end{proof}

\section{Giannopoulos}\label{sec:gia}

In this section we shall prove Theorem~\ref{thm:full column rank}, following the lines of a clever iterative procedure that was devised by Giannopoulos in~\cite{Gia96}. Throughout the ensuing discussion, we may assume in the setting of Theorem~\ref{thm:full column rank} that $\omega=\m$, in which case $\rank(A)=m$. Indeed, there is no loss of generality by doing so because for general $\omega\subset \m$ we could then consider the restricted operator $AJ_\omega:\R^\omega\to \R^n$ in order to obtain  Theorem~\ref{thm:full column rank} as stated in the Introduction.

\subsubsection*{Proof overview} The overall strategy of the ensuing proof can be explained in broad strokes given the tools that were already presented in Section~\ref{sec:prem}. The ultimate goal of Theorem~\ref{thm:full column rank} is to obtain an upper bound on the operator norm $\|\cdot \|_{\S_\infty}$ of a certain $m$ by $n$ matrix (the inverse of an appropriate coordinate restriction of the given $n$ by $m$ matrix $A$), while we have already seen in Lemma~\ref{lem:grothendieck pietsch} that if one does not mind composing with a further coordinate projection then such a bound follows automatically from a weaker upper estimate on the operator norm $\|\cdot\|_{\ell_2^n\to \ell_1^m}$. The latter quantity can be controlled using the Sauer--Shelah lemma due to the following reasoning.

Let $\{\mathsf{v}_j\}_{j=1}^m$ be the dual basis of $\{Ae_j\}_{j=1}^m$ that is given in~\eqref{eq:def vj}. Consider the subset $\Omega$ of the hypercube $\{-1,1\}^m$ consisting of all those sign vectors $\e=(\e_1,\ldots,\e_m)$ for which the Euclidean norm $\|\sum_{j=1}^m \e_j \mathsf{v}_j\|_2$ is not too large, with the precise meaning of ``not too large" here to be specified in the proof of  Lemma~\ref{lem:induction} below; see~\eqref{eq:def Omega}. The parallelogram identity says that if $\e\in \{-1,1\}^m$ is chosen uniformly at random then the expectation of $\|\sum_{j=1}^m \e_j \mathsf{v}_j\|_2^2$ equals $\sum_{j=1}^m \|\mathsf{v}_j\|_2^2$. So, by Markov's inequality, an appropriate setting of the parameters would yield that the cardinality of $\Omega$ is greater than $2^{m-1}=|\{-1,1\}^m|/2$. The Sauer--Shelah lemma would then furnish a coordinate subset $\beta\subset \m$ with the property that every sign pattern $(\e_j)_{j\in \beta}\in \{-1,1\}^\beta$ can be completed to a full dimensional sign vector $\e\in \{-1,1\}^m$ such that $\sum_{j=1}^m \e_j \mathsf{v}_j$ is ``short" in the Euclidean norm.

The above conclusion implies an upper bound on the operator norm of the inverse of the restriction of $A$ to $\R^\beta$, when it is viewed as an operator from $\ell_2^\beta$ to $\ell_1^m$. Indeed, given an arbitrary vector $(a_j)_{j\in \beta}\in \R^\beta$, the goal is to bound $\sum_{j\in \beta}|a_j|$ in terms of $\|\sum_{j\in \beta} a_j Ae_j\|_2$. The sign pattern to be considered is then the signs of the coefficients $(a_j)_{j\in \beta}\in \R^\beta$, i.e., set $\e_j=\sign(a_j)$ for every $j\in \beta$. The (Sauer--Shelah) subset $\beta\subset \m$ was constructed so that this sign vector can be completed to a full dimensional sign vector $\e\in \{-1,1\}^m$ with control on the Euclidean length of $\sum_{j=1}^m \e_j \mathsf{v}_j$. But $\{\mathsf{v}_j\}_{j=1}^m$ is a dual basis of $\{Ae_j\}_{j=1}^m$, so by the definition of $(\e_j)_{j\in \beta}$ the quantity $\sum_{j\in \beta}|a_j|$ is equal to the scalar product of $\sum_{j\in \beta} a_j Ae_j$ with the ``short" vector $\sum_{j=1}^m \e_j \mathsf{v}_j$. By Cauchy--Schwarz this scalar product is bounded from above by the Euclidean length of $\sum_{j\in \beta} a_j Ae_j$ times the Euclidean length of $\sum_{j=1}^m \e_j \mathsf{v}_j$, with the latter quantity being bounded above by design.

By Lemma~\ref{lem:grothendieck pietsch} we can now pass to a further subset of $\beta$ and compose the resulting inverse matrix with the coordinate projection onto that subset so as to ``upgrade" this control on the operator norm from $\ell_2^{\beta}$ to $\ell_1^m$ to a better upper bound on $\|\cdot \|_{\S_\infty}$. Complications arise when one examines the above strategy from the quantitative perspective. The Sauer--Shelah lemma can at best produce a coordinate subset of size $m/2$, while we desire to obtain restricted invertibility on a potentially larger subset. Moreover, in the above procedure the Sauer--Shelah subset is further  reduced in size due to the subsequent use of Lemma~\ref{lem:grothendieck pietsch}. Since we desire to extract larger coordinate subsets, one can attempt to apply this reasoning iteratively, i.e., start by using the Sauer--Shelah lemma to obtain a coordinate subset, followed by an application of  Lemma~\ref{lem:grothendieck pietsch} to pass to a further subset $\beta'\subset \m$. Now apply the same double selection procedure to $\m\setminus \beta'$, thus obtaining a subset $\beta''\subset \m\setminus \beta'$, and iterate this procedure by now considering $\m\setminus (\beta'\cup\beta'')$ and so forth. To make this strategy work, one needs to formulate a stronger inductive hypothesis so as to allow one to ``glue" the local information on the subsets that are extracted in each step of the iteration  into global information on their union, while ensuring that the end result is a sufficiently large coordinate subset. This is the reason why the assumptions of Lemma~\ref{lem:induction} below are more complicated. The technical details that implement the above strategy are explained in the remainder of this section.

\begin{lemma}\label{lem:induction}
Fix $n\in \N$ and $m\in \n$. Let $A:\R^m \to \R^n$ be a linear operator such that the vectors $\{Ae_j\}_{j=1}^m\subset \R^n$ are linearly independent. Suppose that $k\in \N\cup\{0\}$ and $\s\subset \m$. For $j\in \m$ recall the definition of the subspace $F_j\subset \R^n$ in~\eqref{eq:def Fj} (with $\omega=\m$), i.e,
\begin{equation*}
F_j=\left(\spn\left\{A e_i\right\}_{i\in \m\setminus \{j\}}\right)^\perp.
\end{equation*}
Then there exists $\tau\subset \s$ with $|\tau|\ge (1-2^{-k})|\s|$ such that for every $\vartheta\subset \m$ that satisfies $\vartheta\supseteq \tau$ and every $a=(a_1,\ldots,a_m)\in \R^m$ there exists an index $j\in \m$ for which
\begin{equation}\label{eq:inductive statement}
\sum_{i\in \tau} |a_i|\le \frac{\sqrt{|\s|}\sum_{r=1}^{k}2^{\frac{r}{2}}}{\|\proj_{F_j}Ae_j\|_2}\bigg\|\sum_{i\in \vartheta}a_i Ae_i\bigg\|_2+(2^k-1)\sum_{i\in \vartheta\cap(\s\setminus\tau)}|a_i|.
\end{equation}
\end{lemma}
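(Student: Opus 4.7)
The plan is to prove the lemma by induction on $k$. The base case $k = 0$ is immediate: take $\tau = \emptyset$, and the asserted inequality reduces to $0 \le 0$. The inductive step will combine one application of the Sauer--Shelah lemma (which gives the $k=1$ statement) with a recursive application of the lemma on the complement. Throughout I would use the dual basis $\mathsf{v}_j = \proj_{F_j}(Ae_j)/\|\proj_{F_j}(Ae_j)\|_2^2$ from Section~\ref{sec:dual basis}, which satisfies $\langle \mathsf{v}_i, Ae_j\rangle = \delta_{ij}$ for all $i,j \in \m$.

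For the ``level one'' extraction I would define
$$\Omega = \Big\{\varepsilon \in \{-1,1\}^\sigma : \big\|\textstyle\sum_{i \in \sigma}\varepsilon_i \mathsf{v}_i\big\|_2^2 \le 2\textstyle\sum_{i \in \sigma}\|\mathsf{v}_i\|_2^2\Big\}.$$
The parallelogram identity gives $\E\|\sum_{i \in \sigma}\varepsilon_i \mathsf{v}_i\|_2^2 = \sum_{i \in \sigma} \|\mathsf{v}_i\|_2^2$, so Markov's inequality yields $|\Omega| > 2^{|\sigma|-1}$, and Lemma~\ref{sauer-shelah} produces $\tau_1 \subset \sigma$ with $|\tau_1| \ge \lceil(|\sigma|+1)/2\rceil$ that is shattered by $\Omega$. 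Given any $\vartheta \supseteq \tau_1$ and $a \in \R^m$, pick $\varepsilon \in \Omega$ with $\varepsilon_i a_i = |a_i|$ for every $i \in \tau_1$. The duality $\langle \mathsf{v}_i, Ae_j\rangle = \delta_{ij}$ yields the identity $\sum_{i \in \sigma \cap \vartheta} \varepsilon_i a_i = \langle \sum_{i \in \sigma}\varepsilon_i \mathsf{v}_i, \sum_{j \in \vartheta}a_j Ae_j\rangle$; isolating $\sum_{i \in \tau_1}|a_i|$, applying Cauchy--Schwarz, and using $\sum_{i \in \sigma}\|\mathsf{v}_i\|_2^2 \le |\sigma|\max_{i \in \sigma}\|\mathsf{v}_i\|_2^2$ then proves the $k=1$ statement with index $j_1 \in \sigma$ attaining that maximum and constant $\sqrt{2|\sigma|}$.

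For the inductive step $k-1 \Rightarrow k$ with $k \ge 1$, apply the level-one extraction to $\sigma$ to obtain $\tau_1$, note that $|\sigma \setminus \tau_1| \le |\sigma|/2$, and apply the inductive hypothesis with $\sigma$ replaced by $\sigma \setminus \tau_1$ to produce $\tau_2 \subset \sigma \setminus \tau_1$ with $|\tau_2| \ge (1 - 2^{-(k-1)})|\sigma \setminus \tau_1|$. Set $\tau = \tau_1 \cup \tau_2$; a direct calculation gives $|\tau| \ge (1 - 2^{-k})|\sigma|$. For any $\vartheta \supseteq \tau$ both bounds apply with the same $\vartheta$. Adding them, using the decomposition $\vartheta \cap (\sigma \setminus \tau_1) = \tau_2 \cupdot (\vartheta \cap (\sigma \setminus \tau))$ to split the error term from the $k=1$ step (one piece absorbed by $\sum_{i \in \tau_2}|a_i|$, the other becoming the new error term), and using $|\sigma \setminus \tau_1| \le |\sigma|/2$ to absorb a factor of $\sqrt{2}$, gives the inductive conclusion with $j \in \{j_1,j_2\}$ chosen to minimize $\|\proj_{F_j}Ae_j\|_2$. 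The algebraic identities $\sqrt{2|\sigma|}(1 + \sum_{r=1}^{k-1}2^{r/2}) = \sqrt{|\sigma|}\sum_{r=1}^k 2^{r/2}$ and $2(2^{k-1}-1) + 1 = 2^k - 1$ make the constants close up exactly.

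The main obstacle is the bookkeeping: the error term $\sum_{i \in \vartheta \cap (\sigma \setminus \tau)}|a_i|$ together with the universal quantification over all $\vartheta \supseteq \tau$ is precisely what enables the induction to compose. At the level-one step Sauer--Shelah controls only the coefficients in $\tau_1$; when we enlarge to $\tau = \tau_1 \cup \tau_2$, the coefficients of $\tau_2$ first appear inside the error bucket and are then recaptured by invoking the inductive bound for $\tau_2$ against the same $\vartheta$. The $\sqrt{2}$ savings from $|\sigma \setminus \tau_1| \le |\sigma|/2$ is exactly what forces the constants to telescope into $\sqrt{|\sigma|}\sum_{r=1}^k 2^{r/2}$ rather than blowing up geometrically.
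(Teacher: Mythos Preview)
Your proof is correct and follows essentially the same approach as the paper: iterated Sauer--Shelah combined with the dual-basis/Cauchy--Schwarz trick, with the error term $\sum_{i\in\vartheta\cap(\sigma\setminus\tau)}|a_i|$ carried through the induction. The only cosmetic difference is the order of operations---the paper applies the inductive hypothesis to $\sigma$ first and then Sauer--Shelah to the remainder $\sigma\setminus\tau$, whereas you apply Sauer--Shelah to $\sigma$ first and then recurse on $\sigma\setminus\tau_1$; the constants telescope identically either way.
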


\begin{proof}
It will be convenient to introduce the following notation.
\begin{equation}\label{eq:M alpha}
M\eqdef \max_{j\in \m} \frac{1}{\|\proj_{F_j}Ae_j\|_2}\qquad\mathrm{and}\qquad \alpha_k\eqdef \sum_{r=1}^{k}2^{\frac{r}{2}}.
\end{equation}
Throughout we adhere to the convention that an empty sum vanishes, thus in particular $\alpha_0=0$.

Under the notation~\eqref{eq:M alpha}, our goal becomes to show that there exists $\tau\subset \s$ with $|\tau|\ge (1-2^{-k})|\s|$ such that for every $\vartheta\subset \m$ that satisfies $\vartheta\supseteq \tau$ and every $a\in \R^m$ we have
\begin{equation}\label{eq:inductive statement shorter}
\sum_{i\in \tau} |a_i|\le \alpha_kM\sqrt{|\s|}\bigg\|\sum_{i\in \vartheta}a_i Ae_i\bigg\|_2+(2^k-1)\sum_{i\in \vartheta\cap(\s\setminus\tau)}|a_i|.
\end{equation}
We shall prove this statement by induction on $k$. The case $k=0$ holds vacuously by taking $\tau=\emptyset$. Assuming the validity of this statement for $k$, we shall proceed to deduce its validity for $k+1$.

We are given $\tau\subset \s$ with $|\tau|\ge (1-2^{-k})|\s|$ such that for every $\vartheta\subset \m$ that satisfies $\vartheta\supseteq \tau$ we know that~\eqref{eq:inductive statement shorter} holds true for every $a\in \R^m$. Observe that if $\tau=\s$ then $\tau$ itself would satisfy the required statement for $k+1$, so we may assume from now on that $\s\setminus\tau\neq \emptyset$.

For every $j\in \m$ let $\mathsf{v}_j$ be given as in~\eqref{eq:def vj}, i.e.,
\begin{equation}\label{eq:def vj-again}
\mathsf{v}_j\eqdef \frac{\proj_{F_j}Ae_j}{\|\proj_{F_j}Ae_j\|_2^2}\in \R^n.
\end{equation}
Observe that the denominator in~\eqref{eq:def vj-again} (and also in~\eqref{eq:inductive statement} and~\eqref{eq:M alpha}) does not vanish since we are assuming in Lemma~\ref{lem:induction} that $\{Ae_j\}_{j=1}^m$ are linearly independent. Define $\Omega\subset \{-1,1\}^{\s\setminus \tau}$ as follows.
\begin{equation}\label{eq:def Omega}
\Omega\eqdef \bigg\{\e\in \{-1,1\}^{\s\setminus \tau}:\ \bigg\|\sum_{i\in \s\setminus \tau} \e_i \mathsf{v}_i\bigg\|_2 \le M\sqrt{2|\s\setminus \tau|}\bigg\}.
\end{equation}
By the parallelogram identity we have
\begin{multline}\label{eq:use markov omega}
M^2|\s\setminus\tau|\stackrel{\eqref{eq:M alpha}}{\ge} \sum_{i\in \s\setminus \tau} \frac{1}{\|\proj_{F_i}Ae_i\|_2^2}\stackrel{\eqref{eq:def vj-again}}{=} \sum_{i\in \s\setminus \tau}\|\mathsf{v}_i\|_2^2=\frac{1}{2^{|\s\setminus\tau}|}\sum_{\e\in \{-1,1\}^{\s\setminus\tau}}\bigg\|\sum_{i\in \s\setminus \tau} \e_i \mathsf{v}_i\bigg\|_2^2\\
\stackrel{\eqref{eq:def Omega}}{>} \frac{1}{2^{|\s\setminus\tau|}}\sum_{\stackrel{\e\in \{-1,1\}^{\s\setminus\tau}}{\e\notin \Omega}} 2M^2|\s\setminus\tau|=2M^2|\s\setminus\tau|\left(1-\frac{|\Omega|}{2^{|\s\setminus\tau|}}\right).
\end{multline}
Since $|\s\setminus\tau|>0$, it follows from~\eqref{eq:use markov omega} that $|\Omega|> 2^{|\s\setminus \tau|-1}$.

We can now apply the Sauer--Shelah lemma, i.e., Lemma~\ref{sauer-shelah}, thus deducing that there exists a subset $\beta\subset \s\setminus \tau$ with $|\beta|\ge |\s\setminus \tau|/2$ such that $\proj_{\R^\beta}\Omega=\{-1,1\}^\beta$. Defining $\tau^*=\tau\cup\beta$ we shall now proceed to show that $\tau^*$ satisfies the inductive hypothesis with $k$ replaced by $k+1$.

Since $\beta\cap\tau=\emptyset$, $\tau\subset\s$ and $|\beta|\ge |\s\setminus \tau|/2$ we have
\begin{equation}\label{eq:tau star large}
|\tau^*|=|\tau|+|\beta|\ge |\tau|+\frac{|\s|-|\tau|}{2}=\frac{|\tau|+|\s|}{2}\ge \frac{(1-2^{-k})|\s|+|\s|}{2}=(1-2^{-k-1})|\s|.
\end{equation}
Next, suppose that $\vartheta\subset\m$ satisfies $\vartheta\supset \tau^*$. If $a\in \R^m$ then because $\proj_{\R^\beta}\Omega=\{-1,1\}^\beta$ there exists $\e\in \Omega$ such that for every $j\in \beta$ we have $\e_j=\sign(a_j)$. The fact that $\e\in \Omega$ means that
\begin{equation}\label{eq:use eps in Omega}
\bigg\|\sum_{i\in \s\setminus \tau} \e_i \mathsf{v}_i\bigg\|_2\le M\sqrt{2|\s\setminus \tau|}\le \frac{M\sqrt{2|\s|}}{2^{k/2}},
\end{equation}
where in the last step of~\eqref{eq:use eps in Omega} we used the fact that $|\tau|\ge (1-2^{-k})|\s|$.

The definition~\eqref{eq:def vj-again} of $\{\mathsf{v}_j\}_{j=1}^m$ implies that $\langle \mathsf{v}_i,Ae_j\rangle=\d_{ij}$ for every $i,j\in \m$. Hence,
\begin{multline}\label{eq:use sauer shelah}
\sum_{i\in \beta} |a_i|= \bigg\langle \sum_{i\in \beta} a_i Ae_i,\sum_{i\in \s\setminus \tau} \e_i \mathsf{v}_i \bigg\rangle =
\bigg\langle \sum_{i\in \vartheta} a_i Ae_i,\sum_{i\in \s\setminus \tau} \e_i \mathsf{v}_i \bigg\rangle-\sum_{i\in (\vartheta\setminus \beta)\cap (\s\setminus \tau)} \e_i a_i\\
\le \bigg\|\sum_{i\in \vartheta} a_i Ae_i\bigg\|_2 \bigg\|\sum_{i\in \s\setminus \tau} \e_i \mathsf{v}_i\bigg\|_2+\sum_{i\in \vartheta\cap (\s\setminus \tau^*)} |a_i|\stackrel{\eqref{eq:use eps in Omega}}{\le}
\frac{M\sqrt{2|\s|}}{2^{k/2}}\bigg\|\sum_{i\in \vartheta} a_i Ae_i\bigg\|_2+\sum_{i\in \vartheta\cap (\s\setminus \tau^*)} |a_i|.
\end{multline}
The penultimate step of~\eqref{eq:use sauer shelah} uses the Cauchy--Schwarz inequality and the fact that, by the definition of $\tau^*$, we have  $(\vartheta\setminus \beta)\cap (\s\setminus \tau)=\vartheta\cap (\s\setminus \tau^*)$. Now,
\begin{multline}\label{eq:use inductive hypothesis}
\sum_{i\in \tau^*} |a_i|=\sum_{i\in \tau} |a_i|+\sum_{i\in \beta} |a_i|\stackrel{\eqref{eq:inductive statement shorter}}{\le}
 \alpha_kM\sqrt{|\s|}\bigg\|\sum_{i\in \vartheta}a_i Ae_i\bigg\|_2+(2^k-1)\sum_{i\in \vartheta\cap(\s\setminus\tau)}|a_i|+\sum_{i\in \beta} |a_i|\\
= \alpha_kM\sqrt{|\s|}\bigg\|\sum_{i\in \vartheta}a_i Ae_i\bigg\|_2+(2^k-1)\sum_{i\in \vartheta\cap(\s\setminus\tau^*)}|a_i|+2^k\sum_{i\in \beta} |a_i|,
\end{multline}
where for the last step of~\eqref{eq:use inductive hypothesis} recall that $\vartheta\cap(\s\setminus\tau)=(\vartheta\cap(\s\setminus\tau^*))\cupdot \beta$. It remains to combine~\eqref{eq:use sauer shelah} and~\eqref{eq:use inductive hypothesis} to deduce that
\begin{equation}\label{eq:ss conclusion}
\sum_{i\in \tau^*} |a_i|\le \left(\alpha_k+2^{\frac{k+1}{2}}\right)M\sqrt{|\s|}\bigg\|\sum_{i\in \vartheta}a_i Ae_i\bigg\|_2+(2^{k+1}-1)\sum_{i\in \vartheta\cap(\s\setminus\tau^*)}|a_i|.
\end{equation}
Recalling the definition of $\alpha_k$ in~\eqref{eq:M alpha}, we have $\alpha_{k+1}=\alpha_k+2^{(k+1)/2}$, so the validity of~\eqref{eq:tau star large} and~\eqref{eq:ss conclusion} completes the proof that $\tau^*$ satisfies the inductive hypothesis with $k$ replaced by $k+1$.
\end{proof}

\begin{lemma}\label{lem:combine inductive with gro pie}  Fix $m,n,t\in \N$ and $\beta\subset \m$. Let $A:\R^m \to \R^n$ be a linear operator such that the vectors $\{Ae_j\}_{j=1}^m\subset \R^n$ are linearly independent. Then there exist two subsets  $\s,\tau\subset \beta$ satisfying $\s\subset \tau$,  $|\tau|\ge (1-2^{-t})|\beta|$ and $|\tau\setminus \s|\le |\beta|/4$ such that if we denote $\vartheta=\tau\cup(\m\setminus \beta)$ then
$$
\left\|\proj_{\R^\s}(AJ_\vartheta)^{-1}\right\|_{\S_\infty} \lesssim \max_{j\in \m} \frac{2^{\frac{t}{2}}}{\|\proj_{F_j}Ae_j\|_2},
$$
where we recall that the definition of the subspace $F_j\subset \R^n$ is given in~\eqref{eq:def Fj}.
\end{lemma}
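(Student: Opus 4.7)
\bigskip

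\noindent\emph{Proof proposal.} The plan is to combine Lemma~\ref{lem:induction} (to extract a coordinate subset on which the restricted inverse has small $\ell_2\to\ell_1$ norm) with Lemma~\ref{lem:grothendieck pietsch} (to upgrade such an $\ell_2\to\ell_1$ bound to an operator norm bound after one further coordinate projection). Throughout, set $M\eqdef \max_{j\in \m}1/\|\proj_{F_j}Ae_j\|_2$ and $\alpha_t\eqdef \sum_{r=1}^t 2^{r/2}$, so that $\alpha_t\lesssim 2^{t/2}$. The case $t=0$ is trivial (take $\tau=\s=\emptyset$), so assume $t\ge 1$ in what follows.

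First, apply Lemma~\ref{lem:induction} with the subset there equal to $\beta$ and with $k=t$. This produces $\tau\subset\beta$ with $|\tau|\ge (1-2^{-t})|\beta|\ge |\beta|/2$ such that for every $\vartheta'\subset\m$ with $\vartheta'\supseteq\tau$ and every $a\in\R^m$,
\begin{equation}\label{eq:plan induction}
\sum_{i\in\tau}|a_i|\le M\alpha_t\sqrt{|\beta|}\,\bigg\|\sum_{i\in\vartheta'}a_iAe_i\bigg\|_2+(2^t-1)\sum_{i\in\vartheta'\cap(\beta\setminus\tau)}|a_i|,
\end{equation}
where the $j$-dependent factor $1/\|\proj_{F_j}Ae_j\|_2$ in~\eqref{eq:inductive statement} has been replaced by the uniform upper bound $M$. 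The crucial observation—and the main payoff of the way Lemma~\ref{lem:induction} is formulated—is the following. Take $\vartheta\eqdef\tau\cup(\m\setminus\beta)$. Since $\tau\subset\beta$, one checks directly that $\vartheta\cap(\beta\setminus\tau)=\emptyset$, so the residual term in~\eqref{eq:plan induction} vanishes and only the clean $\ell_2$-type estimate remains.

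Next, interpret the surviving inequality as an operator norm bound. The linear operator $AJ_\vartheta:\R^\vartheta\to\R^n$ is injective since $\{Ae_j\}_{j=1}^m$ are linearly independent, so $(AJ_\vartheta)^{-1}:A(\R^\vartheta)\to\R^\vartheta$ is well defined. For $y=\sum_{i\in\vartheta}a_iAe_i\in A(\R^\vartheta)$, extending $a$ by zero outside $\vartheta$ and applying~\eqref{eq:plan induction} to $\vartheta'=\vartheta$ yields
\[
\big\|\proj_{\R^\tau}(AJ_\vartheta)^{-1}y\big\|_1=\sum_{i\in\tau}|a_i|\le M\alpha_t\sqrt{|\beta|}\,\|y\|_2,
\]
i.e., the operator $T\eqdef\proj_{\R^\tau}(AJ_\vartheta)^{-1}$, viewed as mapping into $\ell_1^\tau$, satisfies $\|T\|_{\ell_2\to\ell_1^\tau}\le M\alpha_t\sqrt{|\beta|}$.

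Finally, apply Lemma~\ref{lem:grothendieck pietsch} to $T$ (with the role of $m$ played by $|\tau|$) with parameter $\e=1/4$. This furnishes $\s\subset\tau$ with $|\s|\ge(3/4)|\tau|$, hence $|\tau\setminus\s|\le |\tau|/4\le|\beta|/4$ as required, together with the bound
\[
\big\|\proj_{\R^\s}T\big\|_{\S_\infty}\le \sqrt{\frac{2\pi}{|\tau|}}\cdot M\alpha_t\sqrt{|\beta|}\lesssim M\cdot 2^{t/2},
\]
where in the last step we used $|\tau|\ge|\beta|/2$ and $\alpha_t\lesssim 2^{t/2}$. Because $\s\subset\tau$, the projection $\proj_{\R^\s}$ absorbs $\proj_{\R^\tau}$, so $\proj_{\R^\s}T=\proj_{\R^\s}(AJ_\vartheta)^{-1}$, which is the asserted conclusion. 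The conceptual crux (and the only step where care is needed) is the choice $\vartheta=\tau\cup(\m\setminus\beta)$ that eliminates the residual $\ell_1$ term in~\eqref{eq:inductive statement}; everything else is an essentially mechanical chaining of the two preparatory lemmas.
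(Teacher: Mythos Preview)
Your proof is correct and follows essentially the same approach as the paper: apply Lemma~\ref{lem:induction} with the subset $\beta$ and $k=t$, choose $\vartheta=\tau\cup(\m\setminus\beta)$ to kill the residual $\ell_1$ term, then apply Lemma~\ref{lem:grothendieck pietsch}. The only cosmetic difference is that the paper takes $\e=|\beta|/(4|\tau|)$ in Lemma~\ref{lem:grothendieck pietsch} (so that $|\tau\setminus\s|\le\e|\tau|=|\beta|/4$ exactly), whereas you take $\e=1/4$ and then use $|\tau|\le|\beta|$; both choices yield the same conclusion up to constants.
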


\begin{proof} An application of Lemma~\ref{lem:induction}  with $\s=\beta$ and $k=t$ produces $\tau\subset \beta$ with $|\tau|\ge (1-2^{-t})|\beta|$ such that if we choose $\vartheta=\tau\cup(\m\setminus \beta)$ in~\eqref{eq:inductive statement} and continue with the notation in~\eqref{eq:M alpha} then
\begin{equation}\label{eq:use inductive lemma}
\forall\, a\in \R^m,\qquad \sum_{i\in \tau} |a_i|\lesssim 2^{\frac{t}{2}}M\sqrt{|\beta|} \bigg\|\sum_{i\in \vartheta} a_i Ae_i\bigg\|_2.
\end{equation}
Note that the above choice of $\vartheta$ makes the second term in the right hand side of~\eqref{eq:inductive statement}  vanish, and this is the only way by which~\eqref{eq:inductive statement} will be used here. However, the more complicated form of~\eqref{eq:inductive statement}  was needed in Lemma~\ref{lem:induction} to allow for the inductive construction to go through.

A different way to state~\eqref{eq:use inductive lemma} is the following operator norm bound.
\begin{equation*}\label{eq:deduce 2 to 1 norm}
\left\|\proj_{\R^\tau} (AJ_\vartheta)^{-1}\right\|_{\ell_2^\vartheta\to \ell_1^\tau}\lesssim 2^{\frac{t}{2}}M\sqrt{|\beta|}.
\end{equation*}
Since $|\tau|\ge (1-2^{-t})|\beta|\ge |\beta|/2$, if we set $\e\eqdef |\beta|/(4|\tau|)$ then $\e\in (0,1/2)$. We are therefore in position to use Lemma~\ref{lem:grothendieck pietsch}, thus producing a subset $\s\subset \tau$ with $|\tau\setminus \s|\le \e|\tau|=|\beta|/4$ such that
\begin{equation*}
\left\|\proj_{\R^\s}(AJ_\vartheta)^{-1}\right\|_{\S_\infty}=\left\|\proj_{\R^\s}\proj_{\R^\tau} (AJ_\vartheta)^{-1}\right\|_{\S_\infty}\lesssim \frac{2^{\frac{t}{2}}M\sqrt{|\beta|}}{\sqrt{\e|\tau|}}\asymp 2^{\frac{t}{2}}M. \qedhere
\end{equation*}
\end{proof}

\begin{proof}[Proof of Theorem~\ref{thm:full column rank}] Recall that, in the setting of Theorem~\ref{thm:full column rank}, we are currently assuming without loss of generality that $\omega=\m$.  Choose $r\in \N\cup \{0\}$ such that
\begin{equation}\label{eq:r chouce}
\frac{1}{2^{2r+1}}\le 1-\frac{k}{m}\le \frac{1}{2^{2r-1}}.
\end{equation}
 Denote $\tau_0\eqdef \m$ and $\s_0\eqdef \emptyset$. We shall construct by induction on $u\in \{0,\ldots, r+1\}$ two subsets $\sigma_{u},\tau_{u}\subset \m$ such that if we denote
 \begin{equation}\label{eq:def omega beta u}
 \beta_u\eqdef \tau_u\setminus \s_u\qquad \mathrm{and}\qquad \forall\, u\in \{1,\ldots,r+1\},\qquad \vartheta_u\eqdef \tau_u\cup\left(\m\setminus \beta_{u-1}\right),
 \end{equation}
  then the following properties hold true for every $u\in \{1,\ldots,r+1\}$.
\begin{enumerate}[label=(\alph*)]
\item\label{item:1} $\s_u\subset \tau_u\subset \beta_{u-1}$.
\item\label{item:2}  $|\tau_u|\ge (1-2^{-2r+u-4})|\beta_{u-1}|$ and $|\beta_{u}|\le \frac14 |\beta_{u-1}|$.
\item\label{item:3}  $\left\|\proj_{\R^{\s_u}}(AJ_{\vartheta_u})^{-1}\right\|_{\S_\infty} \lesssim 2^{r-\frac{u}{2}}M$, where $M$ is defined in~\eqref{eq:M alpha}.
\end{enumerate}
Indeed, assuming inductively that $\s_{u-1},\tau_{u-1}$ have been constructed, the existence of sets $\s_u,\tau_u$ with the desired properties follows from an application of Lemma~\ref{lem:combine inductive with gro pie} with $\beta=\beta_{u-1}$ and $t=2r-u+4$.


Recalling~\eqref{eq:def omega beta u}, by~\ref{item:1} we have $\beta_{u-1}=\beta_{u}\cupdot \sigma_u\cupdot (\beta_{u-1}\setminus \tau_u)$ for every $u\in \{1,\ldots,r+1\}$. Hence,
\begin{equation}\label{eq:betau to telescope}
|\s_u|=|\beta_{u-1}|-|\beta_u|-|\beta_{u-1}\setminus \tau_u|\ge |\beta_{u-1}|-|\beta_u|-\frac{|\beta_{u-1}|}{2^{2r-u+4}}\ge |\beta_{u-1}|-|\beta_u|-\frac{m}{2^{2r+u+2}},
\end{equation}
where the penultimate inequality in~\eqref{eq:betau to telescope} uses the first assertion in~\ref{item:2}  and the final inequality in~\eqref{eq:betau to telescope}  uses the fact that, by induction,  the second assertion in~\ref{item:2}  implies that $|\beta_{u-1}|\le m/4^{u-1}$, since $\beta_0=\m$. Observe that the sets $\{\s_u\}_{u=1}^{r+1}$ are pairwise disjoint, so if we denote
\begin{equation}\label{eq:sigma disjoint union}
\s\eqdef \bigcupdot_{u=1}^{r+1} \s_u,
\end{equation}
then
\begin{equation}\label{eq:sigma lower k}
|\s|=\sum_{u=1}^{r+1}|\s_u|\stackrel{\eqref{eq:betau to telescope}}{\ge} |\beta_0|-|\beta_{r+1}|-\frac{m}{2^{2r+2}}\sum_{u=1}^{\infty}\frac{1}{2^u}\ge m-\frac{m}{4^{r+1}}-\frac{m}{2^{2r+2}}=m-\frac{m}{2^{2r+1}}\stackrel{\eqref{eq:r chouce}}{\ge} k.
\end{equation}

Next, recalling the definition of $\vartheta_u$ in~\eqref{eq:def omega beta u}, observe that
\begin{equation}\label{eq:sigma in all omegas}
\s\subset \bigcap_{u=1}^{r+1} \vartheta_u.
\end{equation}
Indeed, in order to verify the validity of~\eqref{eq:sigma in all omegas} note that due to~\ref{item:1}  we have $\s_u,\s_{u+1},\ldots,\s_{r+1}\subset \tau_u$ and $\s_1,\ldots,\s_{u-1}\subset \m\setminus \beta_{u-1}$ for every $u\in \{1,\ldots,r+1\}$. It follows from~\eqref{eq:sigma in all omegas} that if $a\in \R^\s$ then for every $u\in \{1,\ldots,r+1\}$ we have $J_\s a\in J_{\vartheta_u}\R^{\vartheta_u}\subset \R^m$. Consequently,
\begin{equation}\label{projection commutation}
\proj_{\R^{\s_u}}(AJ_{\vartheta_u})^{-1}(A J_\s)a=\proj_{\R^{\s_u}}J_\s a.
\end{equation}
We therefore have the following estimate.
\begin{multline}\label{eq:for all a on r sigma}
\|J_\s a\|_2^2\stackrel{\eqref{eq:sigma disjoint union}}{=} \bigg\|\sum_{u=1}^{r+1}\proj_{\R^{\s_u}} J_\s a\bigg\|_2^2=\sum_{u=1}^{r+1} \left\|\proj_{\R^{\s_u}} J_\s a\right\|_2^2
\stackrel{\eqref{projection commutation}}{=}\sum_{u=1}^{r+1} \left\|\proj_{\R^{\s_u}}(AJ_{\vartheta_u})^{-1}(A J_\s)a\right\|_2^2\\\stackrel{\ref{item:3}}{\lesssim}\sum_{u=1}^{r+1} 2^{2r-u}M^2\left\|(A J_\s)a\right\|_2^2\asymp 2^{2r}M^2\left\|(A J_\s)a\right\|_2^2 \stackrel{\eqref{eq:r chouce}}{\asymp} \frac{mM^2}{m-k} \left\|(A J_\s)a\right\|_2^2.
\end{multline}
Recalling the definition of $M$ in~\eqref{eq:M alpha}, since~\eqref{eq:for all a on r sigma} holds true for every $a\in \R^\s$ we conclude that
$$
\left\|(AJ_\s)^{-1}\right\|_{\S_\infty}\lesssim \frac{\sqrt{m}}{\sqrt{m-k}}\cdot \max_{j\in \m} \frac{1}{\|\proj_{F_j}Ae_j\|_2}.
$$
This is the desired estimate~\eqref{eq:max projection}, which, together with~\eqref{eq:sigma lower k}, concludes the proof of Theorem~\ref{thm:full column rank}.
\end{proof}

\subsection{Geometric interpretation of Theorem~\ref{thm:full column rank}}\label{sec:geometric} Theorem~\ref{thm:gia ellipsoid} below is a result of Giannopoulos~\cite{Gia96}. It can be viewed as a geometric analogue of the Sauer--Shelah lemma for ellipsoids. The (rough) analogy between the two results is that they both assert that certain ``large" subsets of $\R^n$ must admit a large rank coordinate projection that contains a certain ``canonical shape" (a full hypercube in the Sauer--Shelah case and a large  Euclidean ball in Giannopoulos' case). A different geometric analogue of the Sauer--Shelah lemma was proved by Szarek and Talagrand in~\cite{ST89}.

\begin{theorem}[Giannopoulos]\label{thm:gia ellipsoid} There exists a universal constant $c\in (0,\infty)$ with the following property. Suppose that $m,n\in \N$ and $\e\in (0,1)$. Let $y_1,\ldots,y_m\in \R^n$ be vectors that satisfy $\|y_i\|_2\le 1$ for every $i\in \m$. Denote
\begin{equation}\label{eq:def ellipsoid}
\mathcal{E}\eqdef \bigg\{a=(a_1,\ldots,a_m)\in \R^m;\ \bigg\|\sum_{j=1}^m a_jy_j\bigg\|_2\le 1\bigg\}.
\end{equation}
Then there exists a subset $\s\subset \m$ with $|\s|\ge (1-\e)m$ such that $\proj_{\R^\s}(\mathcal{E})\supseteq c\sqrt{\e} B_2^\s$, where $B_2^\s=\{x\in \R^\s:\ \|x\|_2\le 1\}$ denotes the unit Euclidean ball in $\R^\s$.
\end{theorem}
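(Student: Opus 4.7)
The plan is to deduce Theorem~\ref{thm:gia ellipsoid} from Theorem~\ref{thm:full column rank} via a polarity computation that converts the geometric containment $\proj_{\R^\s}(\mathcal{E})\supseteq c\sqrt{\e}\,B_2^\s$ into a restricted invertibility statement for the ``dual'' matrix $C=(A^{-1})^*$, where $A:\R^m\to\R^n$ is the operator with $Ae_i=y_i$. A routine approximation argument (extend $A$ to a square operator, perturb to invertibility, and pass to the limit along a subsequence on which the chosen subset $\s$ is constant, using that there are only finitely many subsets of $\m$) reduces matters to the case $m=n$ with $A$ invertible, under which I work below.

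Under this assumption $\mathcal{E}=A^{-1}(B_2^m)$, and therefore $\proj_{\R^\s}(\mathcal{E})=(J_\s^*A^{-1})(B_2^m)$. Since the image $T(B_2^m)$ of the Euclidean ball under a linear operator $T$ contains $cB_2^\s$ if and only if $\ss_{\min}(T)\ge c$, the desired inclusion is equivalent to $\ss_{\min}(CJ_\s)\ge c\sqrt{\e}$, i.e., to the restricted invertibility bound $\|(CJ_\s)^{-1}\|_{\S_\infty}\lesssim 1/\sqrt{\e}$. The identity $A^{-1}A=I_m$ rewrites as $\langle Ce_i,y_j\rangle=\delta_{ij}$, so $\{Ce_i\}_{i\in\m}$ is the basis of $\R^m$ dual to $\{y_i\}_{i\in\m}$. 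By the uniqueness of the dual basis in $\spn\{Ce_i\}$ established in Section~\ref{sec:dual basis}, applied to the matrix $C$ in place of $A$, we have $y_j=\proj_{F_j^C}Ce_j/\|\proj_{F_j^C}Ce_j\|_2^2$ for every $j\in\m$, where $F_j^C=(\spn\{Ce_i\}_{i\ne j})^\perp$; taking Euclidean norms gives the self-dual identity $\|\proj_{F_j^C}Ce_j\|_2\cdot\|y_j\|_2=1$, hence $\|\proj_{F_j^C}Ce_j\|_2\ge 1$.

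It now suffices to apply Theorem~\ref{thm:full column rank} to the invertible matrix $C$ with $\omega=\m$ and $k=\lceil(1-\e)m\rceil$ (valid since $\rank(C)=m>k$), which produces a subset $\s\subset\m$ of size $k\ge(1-\e)m$ with
\[
\|(CJ_\s)^{-1}\|_{\S_\infty}\;\lesssim\;\frac{\sqrt{m}}{\sqrt{m-k}}\cdot\max_{j\in\m}\|y_j\|_2\;\lesssim\;\frac{1}{\sqrt{\e}},
\]
finishing the proof. The main obstacle is conceptual rather than technical: one has to identify the correct dual object, namely $C=(A^{-1})^*$, whose columns form the basis dual to $\{y_i\}$, and observe that the dual-basis construction of Section~\ref{sec:dual basis} is an involution; this is what makes the factor $\max_j 1/\|\proj_{F_j^C}Ce_j\|_2$ supplied by Theorem~\ref{thm:full column rank} degenerate into the hypothesis $\max_j\|y_j\|_2\le 1$ with no loss. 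The perturbation needed to dispose of the invertibility assumption on $A$ is minor but deserves some care when $m>n$, since one must check that the extremal $b\in\R^{\m\setminus\s}$ attaining $\min\|A_\delta(J_\s x+J_{\m\setminus\s}b)\|_2$ has a bounded-minimum value as $\delta\downarrow 0$; this follows from continuity of the optimal value of a least-squares problem in its data.
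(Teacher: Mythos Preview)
Your core argument is essentially the paper's: both apply Theorem~\ref{thm:full column rank} to the operator whose columns form the dual basis of $\{y_i\}$, after observing that the hypothesis $\|y_j\|_2\le 1$ forces the projections $\|\proj_{F_j}Ce_j\|_2$ to be at least $1$. Your packaging of the final step via the identity $\proj_{\R^\s}(\mathcal{E})=(CJ_\s)^*(B_2)$ and the singular-value characterisation of when the image of a ball contains a ball is in fact cleaner than the paper's route, which instead builds by hand, for each $a\in c\sqrt{\e}\,B_2^\s$, a preimage $a^*\in\mathcal{E}$ using the mixed basis $\{x_j\}_{j\in\s}\cup\{y_j\}_{j\notin\s}$.

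Where you part ways with the paper is the reduction, and here there is a genuine gap. You reduce to the square invertible case by padding and perturbing $A\mapsto A+\delta I_m$, and justify the limit via ``continuity of the optimal value of a least-squares problem in its data''. That continuity fails precisely when the rank of the constraint matrix drops in the limit, which is exactly what happens here when $m>n$: one has $\limsup_{\delta\to 0}f_\delta\le f_0$ but not the reverse inequality, and it is the reverse inequality you need (you know $f_\delta\le 1$ and want $f_0\le 1$). Equivalently, $\mathcal{E}_\delta$ is \emph{not} contained in $\mathcal{E}$ for this perturbation, so the inclusion for $\mathcal{E}_\delta$ does not transfer.

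The fix is painless and is the paper's own reduction: replace $y_i$ by $\tilde y_i^\delta=(y_i,\delta e_i)\in\R^{n}\oplus\R^{m}$, making the vectors linearly independent while keeping $\|\tilde y_i^\delta\|_2\le\sqrt{1+\delta^2}$. The crucial feature is monotonicity: since $\|\sum a_j\tilde y_j^\delta\|_2^2=\|\sum a_jy_j\|_2^2+\delta^2\|a\|_2^2$, one has $\tilde{\mathcal E}^\delta\subset\mathcal E$, so the conclusion for $\tilde{\mathcal E}^\delta$ passes to $\mathcal E$ with no limiting argument whatsoever. Under this reduction your singular-value argument still goes through verbatim if you replace $A^{-1}$ by the Moore--Penrose pseudoinverse $A^\dagger=(A^*A)^{-1}A^*$ (so $C=(A^\dagger)^*=A(A^*A)^{-1}$, whose columns are again the dual basis), since $\mathcal{E}=A^\dagger(B_2^n)$ and $\proj_{\R^\s}(\mathcal{E})=(CJ_\s)^*(B_2^n)$ continue to hold when $A$ is merely injective.
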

In this section we shall show that Theorem~\ref{thm:gia ellipsoid} is equivalent to Theorem~\ref{thm:full column rank}, thus in particular describing a shorter proof of  Theorem~\ref{thm:full column rank} that relies on Theorem~\ref{thm:gia ellipsoid}.

Let us first prove that Theorem~\ref{thm:full column rank} implies Theorem~\ref{thm:gia ellipsoid}. Suppose that we are in the setting that is described in the statement of Theorem~\ref{thm:gia ellipsoid}. It was observed in~\cite{Gia96} that Theorem~\ref{thm:gia ellipsoid} with the additional assumption that $y_1,\ldots,y_m$ are linearly independent formally implies Theorem~\ref{thm:gia ellipsoid} in the above stated generality. Indeed, this follows by applying (the linear independent case of) Theorem~\ref{thm:gia ellipsoid} to the linearly independent vectors $y_1+e_{n+1},y_2+e_{n+2},\ldots,y_m+e_{n+m}\in \R^{n+m}$. So, suppose that $y_1,\ldots,y_m\in \R^n$ are linearly independent and let $x_1,\ldots,x_m\in \spn\{y_1,\ldots,y_m\}$ be the corresponding dual basis, i.e.,
\begin{equation}\label{eq:dual for quoting}
\forall\, i,j\in \m,\qquad \langle x_i,y_j\rangle =\delta_{ij}.
\end{equation}
Define a linear operator $A:\R^m\to \R^n$ by setting $Ae_i=x_i$ for every $i\in \m$. Continuing with the notation for the subspace $F_j\subset \R^n$ that is given in~\eqref{eq:def Fj} (with $\omega=\m$), we know by~\eqref{eq:dual for quoting} that $y_j\in F_j$, so $\langle \proj_{F_j}x_j,y_j\rangle=\langle x_j,y_j\rangle=1$. Since we are assuming in the setting of Theorem~\ref{thm:gia ellipsoid} that $\|y_j\|_2\le 1$, this implies that $1=\langle \proj_{F_j}x_j,y_j\rangle\le \|y_j\|_2\cdot \|\proj_{F_j}x_j\|_2\le \|\proj_{F_j}x_j\|_2$.

An application of Theorem~\ref{thm:full column rank}  now shows that there exists $\s\subset \m$ with $|\s|\ge \lfloor (1-\e)m\rfloor$ and a universal constant $c\in (0,\infty)$ such that
\begin{equation}\label{eq:use theorem b}
\forall\, b\in \R^\s,\qquad \bigg\|\sum_{j\in \s} b_jx_j\bigg\|_2\ge c\sqrt{\e} \bigg(\sum_{j\in \s} b_j^2\bigg)^{\frac12}.
\end{equation}
We claim that~\eqref{eq:use theorem b} implies that $\proj_{\R^\s}(\mathcal{E})\supseteq c\sqrt{\e} B_2^\s$, where $\mathcal{E}$ is given in~\eqref{eq:def ellipsoid}. Indeed, suppose that $a=\sum_{j\in \s}a_je_j\in \R^\s$ satisfies
\begin{equation}\label{eq:a small norm}
a\in c\sqrt{\e}B_2^\s\iff \bigg(\sum_{j\in \s} a_j^2\bigg)^{\frac12}\le c\sqrt{\e}.
\end{equation}
Since the vectors $\{x_j\}_{j\in \s}\cup\{y_j\}_{j\in \m\setminus\s}$ form a basis of $\spn\{y_1,\ldots,y_m\}$, there exists a vector $b=(b_1,\ldots,b_m)\in \R^m$ such that
\begin{equation}\label{eq:mixed with dual}
\sum_{j\in \s} a_jy_j=\sum_{j\in \s} b_j x_j+\sum_{j\in \m\setminus\s} b_jy_j.
\end{equation}
Denote
\begin{equation}\label{eq:def a star}
a^*=(a_1^*,\ldots,a_m^*)\eqdef \sum_{j\in \s} a_je_j-\sum_{j\in \m\setminus\s} b_je_j\in \R^m.
\end{equation}
Then $\proj_{\R^\s} a^*=a$ and
\begin{multline}\label{eq:to cancel norm}
\bigg\|\sum_{j=1}^m a_j^*y_j\bigg\|_2^2=\bigg\langle\sum_{j=1}^m a_j^*y_j,\sum_{j=1}^m a_j^*y_j\bigg\rangle \stackrel{\eqref{eq:mixed with dual}\wedge \eqref{eq:def a star}}{=}\bigg\langle\sum_{j=1}^m a_j^*y_j,\sum_{j\in \s} b_j x_j\bigg\rangle \stackrel{\eqref{eq:dual for quoting}\wedge \eqref{eq:def a star}}{=}\sum_{j\in \s} a_jb_j\\
\le \bigg(\sum_{j\in \s} a_j^2\bigg)^{\frac12}\bigg(\sum_{j\in \s} b_j^2\bigg)^{\frac12}\stackrel{\eqref{eq:a small norm}}{\le} c\sqrt{\e} \bigg(\sum_{j\in \s} b_j^2\bigg)^{\frac12}\stackrel{\eqref{eq:use theorem b}}{\le} \bigg\|\sum_{j\in \s} b_jx_j\bigg\|_2\stackrel{\eqref{eq:mixed with dual}\wedge \eqref{eq:def a star} }{=}\bigg\|\sum_{j=1}^m a_j^*y_j\bigg\|_2.
\end{multline}
By cancelling $\big\|\sum_{j=1}^m a_j^*y_j\big\|_2$ from both sides of~\eqref{eq:to cancel norm} and recalling~\eqref{eq:def ellipsoid}, we conclude that $a^*\in \mathcal{E}$.  Thus $a=\proj_{\R^\s} a^*\in \proj_{\R^\s}(\mathcal{E})$, as required.

Next, we shall prove the converse implication, i.e., that Theorem~\ref{thm:gia ellipsoid} implies Theorem~\ref{thm:full column rank}. Suppose that we are in the setting of Theorem~\ref{thm:full column rank}. As we explained in the beginning of Section~\ref{sec:gia}, we may assume without loss of generality that $\omega=\m$, hence $\rank(A)=m$. Let $M\in (0,\infty)$ be defined as in~\eqref{eq:M alpha}, i.e., $M=\max_{j\in \m} \|\proj_{F_j}Ae_j\|_2^{-1}$. Set
$$
\forall\, i\in \m,\qquad {y}_i\eqdef \frac{\proj_{F_i}Ae_i}{\|\proj_{F_i}Ae_i\|_2}\in \R^n.
$$
Then by definition $\|y_i\|_2=1$ for every $j\in \m$, and, by the same reasoning as in the beginning of Section~\ref{sec:dual basis}, we know that $\langle y_j,Ae_j\rangle\ge 1/M$ and $\langle y_i,Ae_j\rangle =0$ for every distinct $i,j\in \m$. By Theorem~\ref{thm:gia ellipsoid} applied with $\e=1-k/m$ there exists $\s\subset \m$ of size $|\s|\ge (1-\e)m=k$ such that $\proj_{\R^\s} (\mathcal{E})\supseteq c\sqrt{\e}B_2^\s$, where $\mathcal{E}$ is defined in~\eqref{eq:def ellipsoid}. Suppose that $a\in \R^\s\setminus\{0\}$. Then $c\sqrt{\e} a/\|a\|_2\in \proj_{\R^\s} (\mathcal{E})$, which means that there exists $b\in \R^m$ such that $b_j=c\sqrt{\e}a_j/\|a\|_2$ for every $j\in \s$ and (by the definition of $\mathcal{E}$) we have $\big\|\sum_{i=1}^m b_i y_i\|_2\le 1$.  So,
\begin{multline*}
\bigg\|\sum_{j\in \s} a_jAe_j\bigg\|_2\ge \bigg\|\sum_{j\in \s} a_jAe_j\bigg\|_2\cdot \bigg\|\sum_{j=1}^m b_jy_j \bigg\|_2\ge \bigg\langle\sum_{j\in \s} a_jAe_j,\sum_{j=1}^m b_jy_j \bigg\rangle\\=\sum_{j\in\s} a_jb_j\langle Ae_j,y_j\rangle= \sum_{j\in\s} \frac{c\sqrt{\e} a_j^2}{\|a\|_2}\langle Ae_j,y_j\rangle\ge \frac{c\sqrt{\e}}{M\|a\|_2}\sum_{j\in \s} a_j^2=\frac{c\sqrt{m-k}}{M\sqrt{m}}\|a\|_2.
\end{multline*}
This is precisely the desired conclusion in Theorem~\ref{thm:full column rank}.\qed

\section{Marcus--Spielman--Srivastava}\label{sec:MSS}

Our goal here is to prove Theorem~\ref{thm:MSS version}. This section differs from the previous sections in that we shall use the method of interlacing polynomials of Marcus--Spielman--Srivastava without sketching the proofs of the tools that we quote. The reason for this is that the ideas of Marcus--Spielman--Srivastava are remarkable and deep, but nevertheless elementary and accessible, and their presentation in~\cite{MSS15-ramanujan,MSS15-kadison} and especially in the beautiful survey~\cite{MSS14} (which is the main reference in the present section) is already a perfect exposition for a wide mathematical audience.

Suppose that $A:\R^m \to \R^n$ is a linear operator. Let $\j_1,\ldots,\j_k$ be i.i.d. random variables that are distributed uniformly over $\m$. For every $t\in \{1,\ldots,k\}$ consider the random vector
\begin{equation}\label{eq:def wt}
\w_t\eqdef \sqrt{m}Ae_{\j_t}.
\end{equation}
Then,
\begin{equation}\label{eq:wt tensor expectation}
\E\big[\w_t\otimes \w_t\big]=\sum_{i=1}^m (Ae_i)\otimes (Ae_i)=AA^*.
\end{equation}

Denote
\begin{equation}\label{eq:def rho}
\gamma\eqdef \frac{\rank(A)\left(\sqrt{\rank(A)}-\sqrt{k}\right)^2}{\sum_{i=1}^{\rank(A)}\frac{1}{\ss_i(A)^2}}.
\end{equation}
With this notation, we shall prove below that
\begin{equation}\label{eq:positive probability}
\Pr\bigg[\ss_k\bigg(\sum_{t=1}^k \w_t\otimes \w_t\bigg)\ge \gamma\bigg]>0.
\end{equation}
Recalling~\eqref{eq:def wt}, we see that~\eqref{eq:def rho} and~\eqref{eq:positive probability} imply that there exist $j_1,\ldots,j_k\in \m$ such that
\begin{equation}\label{eq:apply probability}
\ss_k\bigg(\sum_{t=1}^k (Ae_{j_t})\otimes (Ae_{j_t})\bigg)\ge \frac{\gamma}{m}=\frac{\rank(A)\left(\sqrt{\rank(A)}-\sqrt{k}\right)^2}{m\sum_{i=1}^{\rank(A)}\frac{1}{\ss_i(A)^2}}.
\end{equation}
The rank of the operator $B\eqdef\sum_{t=1}^k (Ae_{j_t})\otimes (Ae_{j_t})$ is at most the cardinality of $\s\eqdef \{j_1,\ldots,j_k\}$. At the same time, by~\eqref{eq:apply probability} we know that $\ss_k(B)>0$, because we are assuming that $k<\rank(A)$. Thus $B$ has rank at least $k$, implying that the indices $j_1,\ldots,j_k$ are necessarily distinct, or equivalently that $|\s|=k$. Consequently $B=(AJ_\s)(AJ_\s)^*$ and $\ss_k(B)=\ss_{\min}(B)=\ss_{\min}(AJ_\s)^2=1/\|(AJ_\s)^{-1}\|_{\S_\infty}^2$. Therefore~\eqref{eq:apply probability} is the same as the desired restricted invertibility statement~\eqref{eq:with average projection} of Theorem~\ref{thm:MSS version}.

It remains to establish the validity of~\eqref{eq:positive probability}. Denote $Q\eqdef AA^*:\R^n\to \R^n$ and let $\q:\R\to \R$ be the polynomial that is defined as follows.
$$
\forall\, x\in \R,\qquad \q(x)\eqdef \left.(I-\partial_y)^k\det(xI_n+yQ)\right|_{y=0},
$$
where $I$ denotes the identity operator on the space of polynomials and $\partial_y$ is the differentiation operator with respect to the variable $y$ (and, as before, $I_n$ is the $n$ by $n$ identity matrix). By Theorem~4.1 in~\cite{MSS14}, the degree $n$ polynomial  $\q$ is the expectation of the characteristic polynomial of the random matrix $\sum_{t=1}^k \w_t\otimes \w_t$. By Theorem~4.5 in~\cite{MSS14}, all the roots of $\q$ are real, and we denote their decreasing rearrangement by $\rho_1\ge \rho_2\ge\ldots\ge \rho_n$. Thus, $\rho_k$ is the $k$'th largest root of $\q$. A combination of Theorem~1.7 in~\cite{MSS14} and Theorem~4.1 in~\cite{MSS14} shows that
\begin{equation}\label{eq:prob with rho k}
\Pr\bigg[\ss_k\bigg(\sum_{t=1}^k \w_t\otimes \w_t\bigg)\ge \rho_k\bigg]>0.
\end{equation}
Consequently, in order to prove~\eqref{eq:positive probability} it suffices to prove that $\rho_k\ge \gamma$, where $\gamma$ is defined in~\eqref{eq:def rho}.

Write $Q=U\Delta U^{-1}$, where $U:\R^n\to \R^n$ is an orthogonal matrix and $\Delta:\R^n\to \R^n$ is a diagonal matrix whose diagonal equals $(\ss_1(A)^2,\ldots,\ss_n(A)^2)\in \R^n$. Then for every $x,y\in \R$ we have
$$
\det(xI_n+yQ)=\det\left(U(xI_n+y\Delta)U^{-1}\right)=\prod_{i=1}^n \left(x+y\ss_i(A)^2\right)=x^{n-\rank(A)}\prod_{i=1}^{\rank(A)} \left(x+y\ss_i(A)^2\right),
$$
where we used the fact that $\ss_i(A)=0$ when $i>\rank(A)$.  Consequently,
\begin{equation}\label{eq:bring det in}
\q(x)=x^{n-\rank(A)}(I-\partial_y)^k \prod_{i=1}^{\rank(A)} \left(x+y\ss_i(A)^2\right)\Big|_{y=0}.
\end{equation}
We claim that if we denote by $\D$ the differentiation operator on the space of polynomials then
\begin{equation}\label{eq:with D operator}
\q(x)= x^{n-k} \prod_{i=1}^{\rank(A)} \left(I-\ss_i(A)^2\D\right)x^k.
\end{equation}
The identity~\eqref{eq:with D operator} is proven in the special case $\ss_1(A)=\ldots=\ss_{\rank(A)}(A)=1$ in~\cite{MSS14}.  The validity of~\eqref{eq:with D operator} in full generality follows from checking that the coefficients of the polynomials that appear in the right hand sides of~\eqref{eq:bring det in} and~\eqref{eq:with D operator} are equal to each other. Indeed, starting with~\eqref{eq:bring det in},
\begin{align}\label{eq:first expand}
\nonumber x^{n-\rank(A)}(I-\partial_y)^k &\prod_{i=1}^{\rank(A)} \left(x+y\ss_i(A)^2\right)\Big|_{y=0}\\\nonumber  &=x^{n-\rank(A)}\sum_{u=0}^k\binom{k}{u}(-1)^u\partial_y^u \sum_{\Omega\subset \{1,\ldots,\rank(A)\}} x^{\rank(A)-|\Omega|}y^{|\Omega|}\prod_{i\in \Omega} \ss_i(A)^2\Big|_{y=0}\\
&=\sum_{\substack{\Omega\subset \{1,\ldots,\rank(A)\}\\|\Omega|\le k}}\frac{(-1)^{|\Omega|}x^{n-|\Omega|}k!}{(k-|\Omega|)!} \prod_{i\in \Omega} \ss_i(A)^2,
\end{align}
since $\partial_y^u y^{|\Omega|}|_{y=0}=|\Omega|!\cdot \1_{\{|\Omega|=u\}}$ for every $(u,\Omega)\in \{0,\ldots,k\}\times \{1,\ldots,\rank(A)\}$. At the same time,
\begin{equation}\label{eq:second expand}
x^{n-k} \prod_{i=1}^{\rank(A)} \left(I-\ss_i(A)^2\D\right)x^k=x^{n-k}\sum_{\Omega\subset\{1,\ldots,\rank(A)\}} (-1)^{|\Omega|} \bigg(\prod_{i\in \Omega} \ss_i(A)^2 \bigg)\D^{|\Omega|} x^k.
\end{equation}
Since for every for every $(u,\Omega)\in \{0,\ldots,k\}\times \{1,\ldots,\rank(A)\}$ we have $\D^{|\Omega|} x^k=0$ if $|\Omega|>k$ and $\D^{|\Omega|} x^k=x^{k-|\Omega|}k!/(k-|\Omega|)!$ if $|\Omega|\le k$, the validity of~\eqref{eq:with D operator} follows by comparing~\eqref{eq:first expand} and~\eqref{eq:second expand}.

Having established the identity~\eqref{eq:with D operator}, we shall proceed to prove the desired estimate $\rho_k\ge \gamma$ by applying the barrier method of~\cite{BSS12}, reasoning along the lines of the argument that is presented in~\cite{MSS14}. Following~\cite{BSS12,SV13}, given a polynomial $f:\R\to \R$ and $\phi\in (0,\infty)$ we consider the corresponding ``soft spectral edge" $\mathrm{\bf smin}_\phi(f)\in\R$, which is defined as follows
\begin{equation}\label{eq:def smin}
\mathrm{\bf smin}_{\phi}(f)\eqdef \inf\left\{b\in \R:\ f'(b)=-\phi f(b)\right\}.
\end{equation}
As explained in~\cite[Section~3.2]{MSS14}, it is simple to check that for every $\phi\in (0,\infty)$ the smallest real root of $f$ is at least the quantity $\mathrm{\bf smin}_{\phi}(f)$. Hence, if we define \begin{equation}\label{eq:def g prod}
g(x)\eqdef \prod_{i=1}^{\rank(A)} \left(I-\ss_i(A)^2\D\right)x^k,
\end{equation}
then it follows from the above discussion and the identity~\eqref{eq:with D operator} that it suffices to prove that
\begin{equation}\label{eq:goal g}
\sup_{\phi\in (0,\infty)}  \mathrm{\bf smin}_{\phi}(g)\ge \gamma.
\end{equation}
Indeed, by~\eqref{eq:with D operator} the $n$ real roots of $\q$ consist of $0$ with multiplicity $n-k$ and also the $k$ roots of $g$ (which are therefore necessarily real).  Since $g$ has degree $k$, the validity of~\eqref{eq:goal g} would imply that the smallest root of $g$ is at least $\gamma>0$, so the $k$'th largest root of $\q$ would be at least $\gamma$ as well.

To prove~\eqref{eq:goal g}, recall that Lemma~3.8 of~\cite{MSS14} asserts that for every  polynomial $f:\R\to \R$ all of whose roots are real, and for every $\phi\in (0,\infty)$, we have
\begin{equation}\label{eq:smin growth}
\mathrm{\bf smin}_\phi\big((I-\D)f\big)\ge \mathrm{\bf smin}_{\phi}(f)+\frac{1}{1+\phi}.
\end{equation}
For $\ss\in (0,\infty)$ define $f_\ss:\R\to \R$ by setting $f_{\ss}(x)\eqdef f(\ss x)$ for every $x\in \R$. Observe that
\begin{equation}\label{eq:rescaling identities}
\forall\, \ss\in (0,\infty),\qquad (I-\ss \D)f= ((I-\D)f_{\ss})_{1/\ss}\qquad \mathrm{and}\qquad \mathrm{\bf smin}_{\phi}(f_\ss)\stackrel{\eqref{eq:def smin}}{=} \frac{\mathrm{\bf smin}_{\phi/\ss}(f)}{\ss}.
\end{equation}
Consequently, for every real-rooted polynomial $f$ and every $\ss,\phi\in (0,\infty)$ we have
\begin{multline}\label{eq:smin growth rescaled}
\mathrm{\bf smin}_\phi\big((I-\ss \D)f\big)\stackrel{\eqref{eq:rescaling identities}}{=}\mathrm{\bf smin}_{\phi}\big(((I-\D)f_{\ss})_{1/\ss}\big)\stackrel{\eqref{eq:rescaling identities}}{=}\ss\cdot \mathrm{\bf smin}_{\ss \phi}\big((I-\D)f_\ss\big)\\\stackrel{\eqref{eq:smin growth}}{\ge} \ss\left(\mathrm{\bf smin}_{\ss \phi}(f_\ss)+\frac{1}{1+\ss\phi}\right)\stackrel{\eqref{eq:rescaling identities}}{=}\mathrm{\bf smin}_{\phi}(f)+\frac{1}{\frac{1}{\ss}+\phi}.
\end{multline}
By iterating~\eqref{eq:smin growth rescaled} we see that
\begin{multline}\label{eq:convexity of 1/x}
\mathrm{\bf smin}_{\phi}(g)\ge \mathrm{\bf smin}_{\phi}(x^k)+\sum_{i=1}^{\rank(A)} \frac{1}{\frac{1}{\ss_i(A)^2}+\phi}\\\stackrel{\eqref{eq:def smin}}{=} -\frac{k}{\phi}+\sum_{i=1}^{\rank(A)} \frac{1}{\frac{1}{\ss_i(A)^2}+\phi}\ge -\frac{k}{\phi}+\frac{\rank(A)}{\phi+\frac{1}{\rank(A)}\sum_{i=1}^{\rank(A)}\frac{1}{\ss_i(A)^2} },
\end{multline}
where the last step of~\eqref{eq:convexity of 1/x} holds true due to  the convexity of the function $x\mapsto 1/(\phi+x)$ on $(0,\infty)$. One can check that the value of $\phi$ that maximizes the right hand side of~\eqref{eq:convexity of 1/x} is
$$
\phi_{\max}\eqdef \frac{\sqrt{k}}{\sqrt{\rank(A)}-\sqrt{k}}\bigg(\frac{1}{\rank(A)}\sum_{i=1}^{\rank(A)}\frac{1}{\ss_i(A)^2}\bigg).
$$
The right hand side of~\eqref{eq:convexity of 1/x} equals $\gamma$ when $\phi=\phi_{\max}$, so $\rho_k\ge \mathrm{\bf smin}_{\phi_{\max}}(g)\ge \gamma$, as required.\qed

\begin{remark}
The above argument actually yields a subset $\s\subset\m$ with $|\s|=k$ such that
\begin{equation}\label{eq:with transform}
\ss_{\min}(AJ_\s)^2=\ss_{k}(AJ_\s)^2\ge \frac{1}{m}\sup\bigg\{-\frac{k}{\phi}+\sum_{i=1}^{\rank(A)} \frac{\ss_i(A)^2}{1+\phi\ss_i(A)^2}:\ \phi\in (0,\infty)\bigg\}.
\end{equation}
Indeed, continuing with the above notation, we explained why $\rho_k\ge \sup_{\phi\in (0,\infty)} \mathrm{\bf smin}_\phi (g)$, so~\eqref{eq:with transform} follows from~\eqref{eq:prob with rho k} and the penultimate step in~\eqref{eq:convexity of 1/x}.

The estimate~\eqref{eq:with transform} is more complicated than the assertion of Theorem~\ref{thm:MSS version}, but it is sometimes significantly stronger. One such instance is the matrix $A$ of Example~\ref{example:harmonic}. In that case, a somewhat tedious but straightforward  computation allows one to obtain sharp estimates on the right hand side of~\eqref{eq:with transform}, yielding bounds that coincide (up to constant factors) with those that are stated in Example~\ref{example:harmonic} as a consequence of Theorem~\ref{thm:main rank theorem}, while Theorem~\ref{thm:MSS version} yields much weaker bounds. There are also situations in which~\eqref{eq:with transform} yields worse bounds than those that follow from Theorem~\ref{thm:full column rank}, e.g.~when $\ss_1(A)\asymp\ldots\asymp \ss_m(A)\asymp 1$ and $k=(1-\e)m$ the bound on $\|(AJ_\s)^{-1}\|_{\S_\infty}$ that follows from~\eqref{eq:with transform} is $O(1/\e)$ while  in the same situation Theorem~\ref{thm:full column rank} yields the bound $\|(AJ_\s)^{-1}\|_{\S_\infty}\lesssim1/\sqrt{\e}$.
\end{remark}

\bibliographystyle{alphaabbrvprelim}
\bibliography{invertibility}

 \end{document}